\numberwithin{equation}{section}
\renewcommand\le{\leqslant}
\renewcommand\ge{\geqslant}
\newtheorem{theorem}{Theorem}[section]
\newtheorem{lemma}[theorem]{Lemma}
\newtheorem{corollary}[theorem]{Corollary}
\theoremstyle{definition}
\newtheorem{example}[theorem]{Example}
\newtheorem{definition}[theorem]{Definition}
\newtheorem{remark}[theorem]{Remark}
\theoremstyle{remark}
\newenvironment{romenumerate}[1][0pt]{
\addtolength{\leftmargini}{#1}\begin{enumerate}
 }{\end{enumerate}}
\newenvironment{stairenumerate}[1][0pt]{
\addtolength{\leftmargini}{#1}\begin{enumerate}
 }{\end{enumerate}}
\newcounter{oldenumi}
{\setcounter{oldenumi}{\value{enumi}}
\begin{romenumerate} \setcounter{enumi}{\value{oldenumi}}}
{\end{romenumerate}}
\newcounter{thmenumerate}
\newcounter{romxenumerate}   
\newcounter{xenumerate}   
\newcommand\pfitemx[1]{\par#1:}
\newcommand\pfitemref[1]{\pfitemx{\ref{#1}}}
\newcommand{\refT}[1]{Theorem~\ref{#1}}
\newcommand{\refL}[1]{Lemma~\ref{#1}}
\newcommand{\refR}[1]{Remark~\ref{#1}}
\newcommand{\refS}[1]{Section~\ref{#1}}
\newcommand{\refD}[1]{Definition~\ref{#1}}
\newcommand{\refE}[1]{Example~\ref{#1}}
\newcommand{\refF}[1]{Figure~\ref{#1}}
\newcommand{\refTab}[1]{Table~\ref{#1}}
\newcommand{\refand}[2]{\ref{#1} and~\ref{#2}}
\newenvironment{comment}{\setbox0=\vbox\bgroup}{\egroup} 
\xdef\klockan{\the\count1.0\the\count255}
\xdef\klockan{\the\count1.\the\count255}\fi
\newcommand\nopf{\qed}   
\newcommand{\sumk}{\sum_{k=0}^\infty}
\newcommand{\sumki}{\sum_{k=1}^\infty}
\newcommand{\sumn}{\sum_{n=0}^\infty}
\newcommand{\sumin}{\sum_{i=1}^n}
\newcommand{\sumini}{\sum_{i=0}^{n-1}}
\newcommand{\sumkon}{\sum_{k=0}^n}
\newcommand{\prodin}{\prod_{i=1}^n}
\newcommand\set[1]{\ensuremath{\{#1\}}}
\newcommand\bigpar[1]{\bigl(#1\bigr)}
\newcommand\Bigpar[1]{\Bigl(#1\Bigr)}
\newcommand\lrpar[1]{\left(#1\right)}
\newcommand\xcpar[1]{\{#1\}}
\def\rompar(#1){\textup(#1\textup)}    
\newcommand\Bigparfrac[2]{\Bigpar{\frac{#1}{#2}}}
\def\xexp(#1){e^{#1}}
\newcommand\ntoo{\ensuremath{{n\to\infty}}}
\newcommand\downto{\searrow}
\newcommand\punkt[1]{\if.#1\else.\spacefactor1000\fi{#1}}
\newcommand\ie{i.e\punkt}
\newcommand\eg{e.g\punkt}
\newcommand\viz{viz\punkt}
\newcommand\cf{cf\punkt}
\newcommand{\tend}{\longrightarrow}
\newcommand\dto{\overset{\mathrm{d}}{\tend}}
\newcommand\eqd{\overset{\mathrm{d}}{=}}
\newcommand\bbR{\mathbb R}
\newcommand\bbZ{\mathbb Z}
\newcounter{CC}
\newcounter{cc}
\newcommand\E{\operatorname{\mathbb E{}}}
\renewcommand\P{\operatorname{\mathbb P{}}}
\newcommand\Var{\operatorname{Var}}
\newcommand\Cov{\operatorname{Cov}}
\newcommand\Be{\operatorname{Be}}
\newcommand\rise[1]{^{\overline{#1}}}
\newcommand\sign{\operatorname{sign}}
\newcommand\ga{\alpha}
\newcommand\gb{\beta}
\newcommand\gd{\delta}
\newcommand\gam{\gamma}
\newcommand\gG{\Gamma}
\renewcommand\phi{\xxx}  
\newcommand\cS{{\mathcal S}}
\newcommand\ett[1]{\boldsymbol1\xcpar{#1}}
\newcommand\qw{^{-1}}
\newcommand\qq{^{1/2}}
\newcommand\qqw{^{-1/2}}
\renewcommand{\=}{:=}
\newcommand\oi{[0,1]}
\newcommand\ddx{\mathrm{d}}
\newcommand{\pgf}{probability generating function}
\newcommand\rhs{right-hand side}
\newcommand\g{\gamma}
\renewcommand\b{\beta}
\renewcommand\a{\alpha} 
\newcommand\be{\begin{equation}}
\newcommand\ee{\end{equation}}
\newcommand\lbl{\label}
\newcommand\na{N_\ga}
\newcommand\nb{N_\gb}
\newcommand\nc{N_\gam}
\newcommand\nd{N_\gd}
\newcommand\st{staircase tableau}
\newcommand\gabst{$\ga/\gb$-staircase tableau}
\newcommand\css{\bar\cS}
\newcommand\cSx{\cS^*}
\newcommand\Zx{Z^*}
\newcommand\cSxx{\cS^{**}}
\newcommand\Zxx{Z^{**}}
\newcommand\cSxxi{\cS^{**{\prime}}}
\newcommand\tP{\tilde P}
\newcommand\tD{\tilde D}
\newcommand\tvx{\tilde v}
\newcommand\hD{\widehat D}
\newcommand\hP{\widehat P}
\newcommand\gab{_{\ga,\gb}}
\newcommand\ngab{_{n,\ga,\gb}}
\newcommand\noooo{_{n,\infty,\infty}}
\newcommand\ab{_{a,b}}
\newcommand\nab{_{n,a,b}}
\newcommand\niab{_{n-1,a,b}}
\newcommand\vx{v}
\newcommand\Pgab{\P_{\ga,\gb}}
\newcommand\sngab{S_{n,\ga,\gb}}
\newcommand\snoooo{S_{n,\infty,\infty}}
\newcommand\snoooox[1]{S_{n,\infty,\infty,#1}}
\newcommand\snoooorho{\snoooox{\rho}}
\newcommand\snooooq{\snoooox{1/2}}
\newcommand\pnab{P\nab}
\newcommand\pniab{P\niab}
\newcommand\vab{\vx\ab}
\newcommand\noo{_{n,0,0}}
\newcommand\tvxoo{\tvx_{0,0}}
\newcommand\tpnoo{\tP_{n,0,0}}
\newcommand{\euler}[2]{\genfrac{ < }{ > }{0pt}{}{#1}{#2}}
\newcommand\oivar{$0/1$-variable}
\newcommand\wt{\operatorname{wt}}
\newcommand\wtx{\operatorname{wt}}
\newcommand\hga{\hat\ga}
\newcommand\hgb{\hat\gb}
\newcommand\ha{\hat\a}
\newcommand\hb{\hat\b}
\newcommand\snj{\snx{j}}
\newcommand\snx[1]{\sxx{n}{#1}}
\newcommand\sxx[2]{S_{#1}(#2)}
\newcommand{\Polya}{P\'olya}
\newcommand\urladdrx[1]{{\urladdr{\def~{{\tiny$\sim$}}#1}}}
\begin{document}
\title
{Weighted random staircase tableaux}

\date{18 December, 2012}

\author[Pawe{\l} Hitczenko]{Pawe{\l} Hitczenko${}^\dagger$}
\thanks{$\dagger$ 
Department of Mathematics, Drexel University, Philadelphia, PA 19104, USA,
phitczenko@math.drexel.edu} 
\thanks{$\dagger$ Partially supported by 
Simons Foundation (grant \#208766 to Pawe{\l} Hitczenko)}
\address{Department of Mathematics, Drexel University, Philadelphia, 
PA  19104, USA} 
\email{phitczenko@math.drexel.edu}
\urladdrx{http://www.math.drexel.edu/~phitczen/}

\author[Svante Janson]{Svante Janson${}^\ddagger$}
\thanks{$\ddagger$ Department of Mathematics, Uppsala
University, Box 480, SE--751 06, Uppsala, Sweden, svante.janson@math.uu.se} 
\thanks{$\ddagger$
Partly supported by the Knut and Alice Wallenberg Foundation}
\address{Department of Mathematics, Uppsala University, PO Box 480,
SE-751~06 Uppsala, Sweden}
\email{svante.janson@math.uu.se}
\urladdrx{http://www2.math.uu.se/~svante/}


\subjclass[2010]{60C05 (05A15, 05E99, 60F05)}

\begin{abstract} 
This paper concerns a relatively new combinatorial structure called
\st{x}. They were introduced in the context of the
asymmetric exclusion process and Askey--Wilson polynomials, however,
their purely 
combinatorial properties have gained considerable interest in the past
few years.

In this paper we further study  combinatorial properties of
\st{x}. We consider a general model of \st{x} in which symbols
that appear in \st{x} may have arbitrary positive weights. Under
this general model we derive  a number of results. Some of
our results concern the limiting laws for the number of appearances of
symbols in a random \st{x}.  They generalize and subsume earlier results
that were obtained 
for specific values  of the weights.  

One advantage of
our generality is that we may let the weights approach extreme values of
zero or infinity which covers further special cases appearing
earlier in the literature. Furthermore, our generality allows us to
analyze the structure of random \st{x} and we 
obtain several  results in this direction. 

One of the tools we use are generating functions of the parameters of
interests. This leads us to a two--parameter family of polynomials and
we study this family as well. Specific values of the parameters
include number of special cases analyzed earlier in the literature.  All of
them are generalizations of the classical Eulerian 
polynomials. 

We also briefly discuss the relation of \st{x} to the asymmetric
exclusion process, to other recently introduced types of tableaux, and
to an urn model studied by a number of researchers, including Philippe
Flajolet.  
\end{abstract}

\maketitle

\section{Introduction and main results}\label{S:intro}

This paper is concerned with a combinatorial structure introduced 
recently by Corteel and Williams \cite{cw_pnas, cw}  and called 
\emph{staircase tableaux}.   
The original motivations were  in 
connections with the asymmetric exclusion process (ASEP) on a
one-dimensional lattice with open boundaries,  an important  model in
statistical mechanics,
see \refS{SASEP} below for a brief summary and \cite{cw} for the full story.
The generating function for staircase tableaux was also used to 
give a combinatorial formula for the
moments of the 
Askey--Wilson polynomials (see \cite{cw,cssw} for the details). 
Further work includes \cite{cd-h} 
where special situations in which the generating function of staircase
tableaux took a particularly simple form were considered. Furthermore,
\cite{d-hh} deals with the analysis  
of various parameters associated with appearances of the Greek  letters $\a$,
$\b$, $\delta$, and $\g$ in a randomly chosen staircase tableau
(see below,  or \eg{} \cite[Section~2]{cw},  for the definitions and  the
meaning of these symbols). 
Moreover, there are natural bijections (see \cite[Appendix]{cw}) between
the a class of \st{x} (the \emph{\gabst{x}} defined below) and
\emph{permutation tableaux}
(see \eg{} \cite{ch,CW1,CW2,hj} and the references therein for more information
on these objects and their connection to a version of the ASEP) as well as
to 
\emph{alternative tableaux} \cite{n} which, in turn, are in one-to-one
correspondence with  
\emph{tree-like tableaux} \cite{abn}; we discuss this further
in \refS{Sperm}.

We recall the definition of a staircase tableau
 introduced in \cite{cw_pnas,cw}: 
\begin{definition}
A {\it staircase tableau of size $n$} is a Young diagram of  shape
$(n,n-1,\dots,2,1)$ whose boxes are filled according to the following rules: 
\begin{stairenumerate}[5pt]
\item\label{st0} 
each box is either empty or contains one of the letters $\a$, $\b$,
$\delta$, or $\g$; 
\item\label{stdiag} 
no box on the diagonal is empty;
\item\label{stbd} 
all boxes in the same row and to the left of a $\b$ or a $\delta$ are empty;
\item\label{stag} 
all boxes in the same column and above an $\a$ or a $\g$ are empty.
\end{stairenumerate}
\end{definition}
An example of a staircase tableau  is given in \refF{fig:tab}.


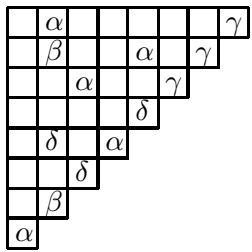
\begin{figure}[htbp]
\setlength{\unitlength}{0.4cm}
\begin{picture}(10,8)(0,0)\thicklines  
\put(0,0){\line(0,0){8}}
\put(1,0){\line(0,1){8}}
\put(2,1){\line(0,1){7}}
\put(3,2){\line(0,1){6}}
\put(4,3){\line(0,1){5}}
\put(5,4){\line(0,1){4}}
\put(6,5){\line(0,1){3}}
\put(7,6){\line(0,1){2}}
\put(8,7){\line(0,1){1}}

\put(0,8){\line(1,0){8}}
\put(0,7){\line(1,0){8}}
\put(0,6){\line(1,0){7}}
\put(0,5){\line(1,0){6}}
\put(0,4){\line(1,0){5}}
\put(0,3){\line(1,0){4}}
\put(0,2){\line(1,0){3}}
\put(0,1){\line(1,0){2}}
\put(0,0){\line(1,0){1}}

\put(1.25,7.25){$\alpha$}
\put(7.25,7.25){$\gamma$}
\put(0.25,0.25){$\alpha$}
\put(1.25,1.25){$\beta$}
\put(2.25,2.25){$\delta$}
\put(3.25,3.25){$\alpha$}
\put(4.25,4.25){$\delta$}
\put(5.25,5.25){$\gamma$}
\put(6.25,6.25){$\gamma$}
\put(1.25,3.25){$\delta$}
\put(2.25,5.25){$\alpha$}
\put(4.25,6.25){$\alpha$}
\put(1.25,6.25){$\beta$}

\end{picture}
\caption{ 
A staircase tableau of size 8; its weight is 
$\alpha^5\beta^2\delta^3\gamma^3$.}
\lbl{fig:tab}
\end{figure}

The set of all staircase tableaux of size $n$ will be denoted by $\cS_n$.
There are several proofs of the fact that the number of staircase tableaux 
$|\cS_n|=4^nn!$,
see \eg{} \cite{cssw,cd-h,d-hh} for some of them,
or \eqref{sim_gen} below and its proof in \refS{Spf}.

Given a staircase tableau $S$, we let $\na,\nb,\nc,\nd$ be the
numbers of 
symbols $\ga,\gb,\gam,\gd$ in $S$.
(We also use the notation $\na(S)$, \dots) 
Define the \emph{weight} of $S$ to be
\begin{equation}\label{w}
  \wt(S)\=\ga^{\na}\gb^{\nb}\gam^{\nc}\gd^{\nd},
\end{equation}
\ie, the product of all symbols in $S$.
(This is a simplified version;
see \refS{SASEP} for the more general version including further variables $u$
and $q$. This is
used \eg{} in the connection with the ASEP \cite{cw},
see \cite{cssw} for further properties, 
but we will in this paper only consider the version above, which is
equivalent to taking $u=q=1$.)

By \ref{stag}, each column contains at most one $\ga$ or $\gam$, and thus
$\na+\nc\le n$. 
Similarly, by \ref{stbd}, each row contains at most one $\gb$ or $\gd$ so
$\nb+\nd\le n$. Together with \ref{stdiag} this yields
\begin{equation}
  n \le \na+\nb+\nc+\nd \le 2n.
\end{equation}
Actually, as is seen from \eqref{sim_gen} below, the maximum
of $\na+\nb+\nc+\nd$ is $2n-1$, see also \refE{Eoooo} and \refS{Smax}.
Note that there are $n(n+1)/2$ boxes in a staircase tableau in $\cS_n$.
Hence, in a large staircase tableau, only a small proportion of the boxes
are filled.

The generating function 
\begin{equation}
Z_n(\ga,\gb,\gam,\gd)\=\sum_{S\in\cS_n} \wt(S)  
\end{equation}
has a particularly simple form, \viz, 
see \cite{cssw,cd-h},
\begin{equation}
\lbl{sim_gen}
Z_n(\a,\b,\delta,\g)
=\prod_{i=0}^{n-1}\bigpar{\a+\b+\delta+\g+i(\a+\g)(\b+\delta)}.
\end{equation}
(A proof is included in \refS{Spf}.)
In particular, the number of staircase tableaux of size $n$ is
$Z_n(1,1,1,1)=\prod_{i=0}^{n-1}(4+4i)=4^nn!$, as said above.
(We use $\ga,\gb,\gam,\gd$ as fixed symbols in the tableaux, 
and in $\na,\dots,\nd$,
but otherwise as
variables or real-valued parameters.
This should not cause any confusion.)

Note that the symbols
$\ga$ and $\gam$ have exactly the same role in the definition
above of staircase tableaux, and so do $\gb$ and $\gd$. (This is no longer
true in the connection to the ASEP, see \refS{SASEP},
which is the reason for using four different
symbols in the definition.) We say that a staircase tableau using only the
symbols $\ga$ and 
$\gb$ is an \emph{\gabst}, and we let $\css_n\subset\cS_n$ be the set of all
\gabst{x} of size $n$. 
We thus see that 
any staircase tableau can be 
obtained from an \gabst{} by replacing some (or no) $\ga$ by $\gam$ and
some (or no) $\gb$ by $\gd$; conversely, 
any staircase tableau can be reduced to an \gabst{} by replacing
every $\gam$ by $\ga$ and every $\gd$ by $\gb$.

We define the generating function of \gabst{x} by
\begin{equation}\label{zab0}
Z_n(\ga,\gb)\=\sum_{S\in \css_n} \wt(S)
=Z_n(\ga,\gb,0,0),  
\end{equation}
and note that the relabelling argument just given implies
\begin{equation}\label{ja}
Z_n(\a,\b,\gam,\gd)=Z_n(\a+\g,\b+\delta).  
\end{equation}
We let $x\rise n$  denote the rising factorial defined by
\begin{equation}
  x\rise n\=x(x+1)\dotsm(x+n-1)=\gG(x+n)/\gG(x),
\end{equation}
and note that 
 by \eqref{sim_gen},
\begin{equation}
\label{zab}
\begin{split}
Z_n(\ga,\gb)&=Z_n(\ga,\gb,0,0)
=\prod_{i=0}^{n-1}(\a+\b+i\a\b)
=\ga^n\gb^n(\ga\qw+\gb\qw)\rise n
\\&
=\ga^n\gb^n\frac{\gG(n+\ga\qw+\gb\qw)}{\gG(\ga\qw+\gb\qw)}.
\end{split}
\end{equation}

In particular, as noted in \cite{cd-h} and \cite{cssw}, the number of
\gabst{x} is 
$Z_n(1,1)=2\rise n=(n+1)!$.

\citet{d-hh} studied random staircase tableaux obtained by picking a
staircase tableau in $\cS_n$ uniformly at random. We can obtain the same
result by picking an \gabst{} in $\css_n$ at random with probability
proportional to $2^{\na+\nb}$ and then randomly replacing some symbols;
each $\ga$ is replaced by $\gam$
with probability $1/2$, and each $\gb$ by $\gd$  with probability $1/2$,
with all replacements independent.
Note that the weight $2^{\na+\nb}$ is the weight \eqref{w} if we choose the
parameters $\ga=\gb=2$.
The purpose of this paper is to, more generally, study random \gabst{x}
defined similarly with weights of this type for arbitrary parameters 
$\ga,\gb\ge0$.
(As we will see in \refS{Sex}, this includes several cases considered
earlier. 
It will also be useful in studying the structure of random \st{x}, see
\refS{Ssub}.) 
We generalize several results from \cite{d-hh}.

\begin{definition}\label{Dsngab}
  Let $n\ge1$ and let $\ga,\gb\in[0,\infty)$ with $(\ga,\gb)\neq(0,0)$.
Then $\sngab$ is the random \gabst{} in $\css_n$ with the distribution
\begin{equation}\label{pab}
\Pgab(\sngab=S)
=\frac{\wt(S)}{Z_n(\ga,\gb)}=\frac{\ga^{\na(S)}\gb^{\nb(S)}}{Z_n(\ga,\gb)},
\qquad S\in \css_n.
\end{equation}
We also allow the parameters $\ga=\infty$ or $\gb=\infty$;
in this case \eqref{pab} is interpreted as the limit when $\ga\to\infty$ or
$\gb\to\infty$, with the other parameter fixed. Similarly, we allow
$\ga=\gb=\infty$; in this case 
\eqref{pab} is interpreted as the limit when $\ga=\gb\to\infty$.
See further Examples \ref{Eoo}--\ref{Eoooo} and \refS{Smax}.
(In the case $\ga=\gb=\infty$, we tacitly assume $n\ge2$ or sometimes
even $n\ge3$  to avoid trivial complications.)
\end{definition}

\begin{remark}\label{Rsymm}
  There is a symmetry (involution) $S\mapsto S^\dagger$ 
of   staircase tableaux
defined by reflection in the NW--SE diagonal, thus interchanging rows and
  columns, together with an  exchange of the symbols by
  $\ga\leftrightarrow\gb$ and 
$\gam\leftrightarrow\gd$, see further \cite{cd-h}.
This maps $\css_n$ onto itself, and maps the random
\gabst{} $\sngab$ to $S_{n,\gb,\ga}$; the parameters $\ga$ and $\gb$ thus
play symmetric roles.
\end{remark}

\begin{remark}\label{R4}
  We can similarly define a random staircase tableaux
  $S_{n,\ga,\gb,\gam,\gd}$, with four parameters $\ga,\gb,\gam,\gd\ge0$, by
  picking a staircase tableau $S\in \cS_n$ with probability
  $\wt(S)/Z_n(\ga,\gb,\gam,\gd)$. 
By the argument above, this is the same as
taking a random $S_{n,\ga+\gam,\gb+\gd}$ and randomly replacing each symbol
$\ga$ by $\gam$ with probability $\gam/(\ga+\gam)$,
and each $\gb$ by $\gd$ with probability $\gd/(\gb+\gd)$.
(The case $\ga=\gb=\gam=\gd=1$ was mentioned above.)
Our results can thus be translated to 
results for   $S_{n,\ga,\gb,\gam,\gd}$, 
but we leave this to the reader. 
\end{remark}

\begin{remark}\label{R0}
  For convenience (as a base case in inductions) we allow also $n=0$;
$S_0=\css_0$ contains a single, empty staircase tableaux with
  $\na=\nb=\nc=\nd=0$ and thus weight $\wt=1$, so $Z_0=1$.
(At some places, \eg{} in \refS{Smax}, we assume $n\ge1$ to avoid
  trivial complications.)
\end{remark}

\begin{remark}
  It seems natural to use the parameters $\ga$ and $\gb$ as above in
  \refD{Dsngab}. However, in many of our results it is more convenient,
and sometimes perhaps more natural, to
  use $\ga\qw$ and $\gb\qw$ instead.
We will generally use the notations $a\=\ga\qw$  and $b\=\gb\qw$, and formulate
results in terms of these parameters whenever convenient.
\end{remark}

We are interested in the distribution of various parameters
of $\sngab$.
In particular, we define $A(S)$ and $B(S)$ as the numbers of $\ga$ and
$\gb$, respectively, on the diagonal of an \gabst{} $S$, and consider the
random variables
$A\ngab\=A(\sngab)$ and $B\ngab\=B(\sngab)$;
note that $A\ngab+B\ngab=n$ by \ref{stdiag},
so it suffices to consider one of these.
Moreover, by \refR{Rsymm}, 
$B\ngab\eqd A_{n,\gb,\ga}$.

In order to describe the distribution of $A\ngab$ we need some further
notation.  Define
numbers $\vx\ab(n,k)$, 
for $a,b\in\bbR$, $k\in\bbZ$ and $n=0,1,\dots$,
by the recursion
\begin{equation}\label{vx-rec}
\vab(n,k)=(k+a)\vab(n-1,k)+(n-k+b)\vab(n-1,k-1),
\qquad n\ge1,
\end{equation}
with $\vab(0,0)=1$ and $\vab(0,k)=0$ for $k\neq0$, see \refTab{Tab:vab}.
(It is convenient to define $\vab(n,k)$  for all integers
$k\in\bbZ$, but note that $\vab(n,k)=0$ for $k<0$ and $k>n$, for all
$n\ge0$,
so it really suffices to consider $0\le k\le n$.)
Furthermore, define
polynomials 
\begin{equation}\label{pnab}
  \pnab(x)\=\sumkon \vx\ab(n,k)x^k
=\sum_{k=-\infty}^\infty \vx\ab(n,k)x^k.
\end{equation}
Thus, $P_{0,a,b}(x)=1$. Moreover, the recursion \eqref{vx-rec} is easily seen
to be equivalent to the recursion
\begin{equation}\label{pnab-rec}
  P_{n,a,b}(x)
= \bigpar{(n-1+b)x+a}  P_{n-1,a,b}(x)
+x(1-x)   P_{n-1,a,b}'(x),
\qquad n\ge1.
\end{equation}

\begin{table}
  \begin{tabular}{l|c|c|c|c|}
	$n\backslash k$ & 0 & 1 & 2 & 3 
\\ \hline
0 & 1 & &&
\\ \hline
1 & $a$ & $b$& &
\\ \hline
2 & $a^2$& $a+b+2ab$ & $b^2$ & 
\\ \hline
3 & $a^3$& $a+b+3a^2+3ab+3a^2b$& $a+b+3ab+3b^2+3ab^2$& $b^3$
\\ \hline
  \end{tabular}
\vskip 4pt 
\caption{The coefficients $\vab(n,k)$ of $\pnab$ for small $n$.}
\label{Tab:vab}
\end{table}

In the cases $(a,b)=(1,0)$, $(0,1)$ and $(1,1)$, the numbers $\vab(n,k)$
are the Eulerian numbers and $\pnab(x)$ are the Eulerian polynomials (in
different versions), see \refS{SEuler}.
We can thus see $\vab(n,k)$ and $\pnab(x)$ as generalizations of Eulerian
numbers and polynomials.
Some properties of these numbers and polynomials are given in \refS{Spol},
where we also discuss  some other cases that have been considered earlier.

In the case $a=b=0$, we trivially have $\vx_{0,0}(n,k)=0$ and $P_{n,0,0}=0$
for all $n\ge1$; in this case we define the substitutes, for $n\ge2$,
\begin{equation}\label{tvx00}
  \tvx_{0,0}(n,k)\=v_{1,1}(n-2,k-1)
\end{equation}
and 
\begin{equation}\label{tpn00}
\tP_{n,0,0}(x)\=\sumkon  \tvx_{0,0}(n,k) x^k = xP_{n-2,1,1}(x).
\end{equation}
See further Lemmas \ref{L00rec} and \ref{L00lim}. 

Our main results are the following.
Proofs are given in \refS{Spf}. 

\begin{theorem}\label{T1}
Let $\ga,\gb\in(0,\infty]$ and let $a\=\ga\qw$, $b\=\gb\qw$.
If $(\ga,\gb)\neq(\infty,\infty)$, then
the \pgf{} $g_A(x)$ of the random variable $A\ngab$ is given by
\begin{equation}\label{t1}
  \begin{split}
  g_A(x)&\=
\E x^{A\ngab}=\sumkon \P(A\ngab=k)x^k 
\\&\phantom:
= \frac{\pnab(x)}{\pnab(1)}
= \frac{\pnab(x)}{(a+b)\rise n}
= \frac{\gG(a+b)}{\gG(n+a+b)} \pnab(x).	
  \end{split}
\end{equation}
Equivalently,
\begin{equation}\label{t1b}
  \begin{split}
\P(A\ngab=k)
= \frac{\vx\ab(n,k)}{\pnab(1)}
= \frac{\vx\ab(n,k)}{(a+b)\rise n}
= \frac{\gG(a+b)}{\gG(n+a+b)} \vx\ab(n,k).	
  \end{split}
\end{equation}

In the case $\ga=\gb=\infty$, 
and $n\ge2$, 
we have instead
\begin{gather}
  g_A(x)\=
\sumkon \P(A\ngab=k)x^k 
= \frac{\tpnoo(x)}{\tpnoo(1)}
= \frac{\tpnoo(x)}{(n-1)!},
\\  \label{t100p}
\P(A\ngab=k)
= \frac{\tvxoo(n,k)}{\tpnoo(1)}
= \frac{\tvxoo(n,k)}{(n-1)!}  .
\end{gather}
\end{theorem}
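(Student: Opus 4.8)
The plan is to reduce the whole statement to one identity for weighted counts and then read off the \pgf{} and its normalisation from the polynomials $\pnab$. Put $a\=\ga\qw$, $b\=\gb\qw$, and for $0\le k\le n$ let
\[
W_n(k)\=\sum_{S\in\css_n,\,A(S)=k}\wt(S)
\]
be the total weight of those \gabst{x} of size $n$ whose diagonal carries exactly $k$ symbols $\ga$. Since $\Pgab(A\ngab=k)=W_n(k)/Z_n(\ga,\gb)$, the entire finite case $\ga,\gb\in(0,\infty)$ of the theorem follows from the single identity
\[
W_n(k)=\ga^n\gb^n\,\vx\ab(n,k),\qquad 0\le k\le n.
\]
Indeed, summing over $k$ and invoking $\sum_k W_n(k)=Z_n(\ga,\gb)=\ga^n\gb^n(a+b)\rise n$ from \eqref{zab} gives $\pnab(1)=(a+b)\rise n$, after which dividing $\sum_k W_n(k)x^k$ by $Z_n(\ga,\gb)$ produces $g_A(x)=\pnab(x)/\pnab(1)=\pnab(x)/(a+b)\rise n$ and the $\gG$-forms in \eqref{t1}--\eqref{t1b}.

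I would prove the identity by induction on $n$, the base cases $n=0,1$ being immediate. The inductive step amounts to showing that $W_n(k)$ satisfies the recursion
\[
W_n(k)=\bigpar{\gb+k\ga\gb}W_{n-1}(k)+\bigpar{\ga+(n-k)\ga\gb}W_{n-1}(k-1),
\]
which, after dividing by $\ga^n\gb^n$ and simplifying with $\ga\gb a=\gb$ and $\ga\gb b=\ga$, is exactly \eqref{vx-rec}. To obtain such a recursion one uses the size-$(n-1)\to n$ growth of staircase tableaux underlying the product formula \eqref{sim_gen} (proved in \refS{Spf}): a tableau of size $n$ is built from one of size $n-1$ by adjoining the outermost diagonal box together with the new row and column, and filling the adjoined cells in all ways consistent with \ref{stbd} and \ref{stag}. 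The new diagonal box is nonempty, and $A$ increases by one precisely when it receives an $\ga$; this splits the count into the two displayed terms.

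The main obstacle is that this decomposition does \emph{not} close at the level of $A$ alone: the admissible fillings of the new row and column, and hence their total weight, depend on the actual arrangement of the diagonal symbols of the size-$(n-1)$ tableau and not merely on their number $k$, and one checks on small cases that grouping the resulting tableaux by the value of $A$ does not reproduce \eqref{vx-rec} term by term. Consequently the clean coefficients $k+a$ and $n-k+b$ of \eqref{vx-rec} --- equivalently, the transfer term $x(1-x)\pnab'(x)$ in \eqref{pnab-rec} --- emerge only after the contributions are summed over all fillings and reorganised. Carrying out this reorganisation is the heart of the argument; I expect to do it either by a weight-preserving regrouping of the adjoined-cell fillings or, what may be cleaner, by constructing a \emph{balanced} inductive sampling of the random tableau $\sngab$ from $S_{n-1,\ga,\gb}$ whose conditional increment $A\ngab-A_{n-1,\ga,\gb}$ is Bernoulli with the parameter dictated by \eqref{vx-rec}, and then verifying that its resulting marginal is \eqref{pab}.

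Finally I would dispose of the boundary cases by continuity. For a single infinite parameter, say $\ga=\infty$ (so $a=0$, $b>0$), the law $\Pgab$ is by \refD{Dsngab} the limit as $\ga\to\infty$; since $A\ngab$ takes values in the fixed finite set $\set{0,\dots,n}$ while $\vx\ab(n,k)$ and $(a+b)\rise n$ are continuous in $a$ with $(a+b)\rise n>0$ at $a=0$, formula \eqref{t1b} passes to the limit coefficientwise, giving the case $\ga=\infty$ (and, symmetrically, $\gb=\infty$). For $\ga=\gb=\infty$ both $\pnab(x)$ and $(a+b)\rise n=(2a)\rise n$ vanish as $a=b\downto0$, so $g_A$ appears as a $0/0$ limit; writing $(2a)\rise n\sim 2a\,(n-1)!$ and extracting the linear-in-$a$ part of $P_{n,a,a}(x)$, I would invoke \refL{L00lim} together with the substitute definitions \eqref{tvx00}--\eqref{tpn00} to identify the limit as $\tpnoo(x)/(n-1)!=\tpnoo(x)/\tpnoo(1)$ and its coefficients as $\tvxoo(n,k)/(n-1)!$, which is \eqref{t100p}.
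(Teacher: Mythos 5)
Your reduction of the finite case to the identity $W_n(k)=\ga^n\gb^n\vx\ab(n,k)$ is correct, and your treatment of the boundary cases (single infinite parameter by coefficientwise continuity, $\ga=\gb=\infty$ via \refL{L00lim} and \eqref{tvx00}--\eqref{tpn00}) matches what the paper does. But the proof has a genuine gap at exactly the point you flag yourself: the induction requires the recursion $W_n(k)=(\gb+k\ga\gb)W_{n-1}(k)+(\ga+(n-k)\ga\gb)W_{n-1}(k-1)$, and, as you correctly observe, the column-adjoining decomposition does not yield it term by term, because the total weight of admissible extensions of a size-$(n-1)$ tableau depends on the arrangement of its symbols (specifically, on how many rows have $\ga$ as leftmost entry), not merely on $A(S)$. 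The recursion is in fact a true identity, but you only say you ``expect'' to establish it by a weight-preserving regrouping or a balanced coupling, and neither is carried out. That reorganisation is the entire technical content of the theorem, so what you have is a correct reduction, not a proof.

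For comparison, the paper's resolution is to enlarge the state so that the recursion \emph{does} close: it introduces the catalytic statistic $r(S)$, the number of rows indexed by $\ga$ (equivalently $r=n-\nb$, \refR{Rr}), and the bivariate generating function $D_n(x,z)=\sum_{S\in\css_n}\wt(S)x^{A(S)}z^{r(S)}$. Adding a column gives the closed recursion $D_n(x,z)=\ga z(x-1)D_{n-1}(x,z)+(\ga z+\gb)D_{n-1}(x,z+\gb)$ (\refL{L2}); the shift $z\mapsto z+\gb$ is precisely the footprint of the dependence that blocks your induction. The paper then iterates this recursion down to $D_0$ (\refL{L3}), sets $z=1$ (\refL{L4}), and identifies the result as $(\ga\gb)^n\pnab(x)$ by verifying that it satisfies \eqref{pnab-rec} (\refL{L5}); that identity is equivalent to yours for $W_n(k)$. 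Note also that your alternative ``balanced inductive sampling'' route faces a structural obstruction: deleting the first column of $\sngab$ produces a tableau distributed as $S_{n-1,\ga,\hgb}$ with the \emph{shifted} parameter $\hgb\qw=\gb\qw+1$ (\refT{Tsub}), not as $S_{n-1,\ga,\gb}$, so a coupling whose conditional increment of $A$ is Bernoulli with the parameters dictated by \eqref{vx-rec} is not available directly; making it work would essentially amount to redoing the catalytic-variable analysis.
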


\begin{theorem} \label{T2}
Let $\ga,\gb\in(0,\infty]$ and let $a\=\ga\qw$ and $b\=\gb\qw$. Then
\begin{equation*}
  \begin{split}
\E(A\ngab)
=
\frac{n(n+2b-1)}
{2(n+a+b-1)}
  \end{split}
\end{equation*}
and
\begin{equation*}
  \begin{split}
&\Var(A\ngab)
\\&\quad
=
n\frac{(n-1)(n-2)(n+4a+4b-1)
+6(n-1)(a+b)^2
+12ab(a+b-1)}
{12(n+a+b-1)^2(n+a+b-2)}	
.
  \end{split}
\end{equation*}
\end{theorem}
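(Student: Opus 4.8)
The plan is to read off the mean and variance from the \pgf{} $g_A(x)=\pnab(x)/\pnab(1)$ supplied by \refT{T1}, using the standard identities $\E(A\ngab)=g_A'(1)$ and $\Var(A\ngab)=g_A''(1)+g_A'(1)-g_A'(1)^2$, where $g_A''(1)=\E\bigpar{A\ngab(A\ngab-1)}$. Since \refT{T1} also gives $\pnab(1)=(a+b)\rise n$, everything reduces to computing the first two derivatives $P_{n,a,b}'(1)$ and $P_{n,a,b}''(1)$. I would treat the main case $a+b>0$, i.e.\ $(\ga,\gb)\neq(\infty,\infty)$, first; the remaining case $\ga=\gb=\infty$ (that is $a=b=0$) I would recover at the end by continuity of the finitely supported distribution in the parameters, or directly from the identity $\tP_{n,0,0}(x)=xP_{n-2,1,1}(x)$ of \eqref{tpn00}, which shows $A_{n,\infty,\infty}\eqd 1+A_{n-2,1,1}$.

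The engine is the differential recursion \eqref{pnab-rec}. Writing $p_n\=\pnab(1)$, $q_n\=P_{n,a,b}'(1)$ and $r_n\=P_{n,a,b}''(1)$, I would differentiate \eqref{pnab-rec} once and twice and evaluate at $x=1$. The crucial simplification is that the factor $x(1-x)$ vanishes at $x=1$ while its first two derivatives there equal $-1$ and $-2$; this kills the highest derivative of $P_{n-1,a,b}$ each time and leaves the closed linear recursions
\begin{align*}
q_n &= (n-1+b)\,p_{n-1} + (n+a+b-2)\,q_{n-1},\\
r_n &= 2(n+b-2)\,q_{n-1} + (n+a+b-3)\,r_{n-1}.
\end{align*}
Dividing by $p_n=(n+a+b-1)p_{n-1}$ turns these into first--order recursions for the moments $m_n\=\E(A\ngab)=q_n/p_n$ and $s_n\=\E\bigpar{A\ngab(A\ngab-1)}=r_n/p_n$:
\begin{align*}
m_n &= \frac{n-1+b}{n+a+b-1} + \frac{n+a+b-2}{n+a+b-1}\,m_{n-1},\\
s_n &= \frac{2(n+b-2)}{n+a+b-1}\,m_{n-1} + \frac{n+a+b-3}{n+a+b-1}\,s_{n-1}.
\end{align*}

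From here the computation is routine but lengthy. I would first confirm the claimed formula for $m_n$ by induction on $n$: substituting the inductive hypothesis for $m_{n-1}$ and clearing denominators, the numerator collapses to $n(n+2b-1)$, giving $m_n=n(n+2b-1)/\bigpar{2(n+a+b-1)}$. With $m_{n-1}$ now available as an explicit rational function, the recursion for $s_n$ is a genuine first--order linear recursion; its homogeneous factor, a telescoping product of the form $\prod_{k=3}^{n}(k+a+b-3)/(k+a+b-1)$, collapses to $(a+b)(a+b+1)/\bigpar{(n+a+b-2)(n+a+b-1)}$, so $s_n$ can be obtained either by summation or, more cleanly for the write--up, by guessing its closed form and verifying it by induction exactly as for $m_n$.

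Finally I would assemble $\Var(A\ngab)=s_n+m_n-m_n^2$ and simplify. The main obstacle is precisely this last simplification: both $s_n$ and $m_n^2$ carry denominators with factors $(n+a+b-1)$ and $(n+a+b-2)$, and one must check that after combining over the common denominator $12(n+a+b-1)^2(n+a+b-2)$ the numerator reduces exactly to the stated cubic expression, grouping as $(n-1)(n-2)(n+4a+4b-1)$, $6(n-1)(a+b)^2$ and $12ab(a+b-1)$. This is a finite but delicate polynomial identity in $n,a,b$, which I would organize by collecting terms according to their total degree in $(a,b)$ and checking each homogeneous piece separately. The degenerate case $a=b=0$ is then handled as noted above, and I would record two sanity checks: $a=b=1$ gives $\Var=(n+2)/12$ and $a=b=0$ gives $n/12$ (consistent with $A_{n,\infty,\infty}\eqd 1+A_{n-2,1,1}$), while the $\ga\leftrightarrow\gb$ symmetry of \refR{Rsymm}, which exchanges $a\leftrightarrow b$ and sends $A\ngab\mapsto n-A\ngab$, leaves the variance formula invariant and turns the mean formula into $n$ minus itself, as it must.
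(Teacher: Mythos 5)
Your proposal is correct, and at the top level it coincides with the paper's argument: both read $\E A\ngab$ and $\Var A\ngab$ off the \pgf{} of \refT{T1} via $g_A'(1)$ and $g_A''(1)+g_A'(1)-g_A'(1)^2$, and both dispose of the degenerate case $a=b=0$ by continuity. The genuine divergence is in how you compute $\pnab'(1)$ and $\pnab''(1)$, i.e.\ the content of \refT{TP'}. You differentiate the recursion \eqref{pnab-rec} once and twice and set $x=1$, where the vanishing of $x(1-x)$ drops the highest derivative of $P_{n-1,a,b}$ each time, and you then solve the resulting first-order linear recursions for the normalized moments $m_n$, $s_n$ by induction and telescoping; this is precisely the alternative proof that the paper mentions immediately after stating \refT{TP'} (``differentiating \eqref{pnab-rec} once or twice and then taking $x=1$'') but declines to carry out. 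The paper's written proof in \refS{Spf} goes instead through \refL{L4}: the expansion $D_n(x,1)=(\ga\gb)^n\sum_{\ell}c_{n,\ell}(a+b)\rise\ell(x-1)^{n-\ell}$ makes every derivative at $x=1$ available at once, $\frac{\ddx^k}{\ddx x^k}\pnab(1)=k!\,c_{n,n-k}(a+b)\rise{n-k}$ as in \eqref{magnus}, and $c_{n,n-1}$, $c_{n,n-2}$ are then obtained by summing the recursion \eqref{l4c}. The two computations are in fact formally parallel: under the correspondence $\pnab^{(k)}(1)=k!\,c_{n,n-k}(a+b)\rise{n-k}$, your recursions for $q_n$ and $r_n$ are exactly \eqref{l4c} at $\ell=n$ and $\ell=n-1$. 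What differs is how they are reached and what they cost: the paper's route reuses the catalytic-variable machinery it had already built to prove \refT{T1} and the product formula \eqref{zab}, whereas yours is self-contained given \eqref{pnab-rec}, and working with the probabilistically normalized $m_n$, $s_n$ from the start streamlines the endgame slightly. Your supporting computations all check out: the recursions for $q_n$, $r_n$ are right, the inductive collapse of the mean's numerator to $n(n+2b-1)$ is correct, the homogeneous factor does telescope to $(a+b)(a+b+1)/\bigpar{(n+a+b-2)(n+a+b-1)}$, and your sanity checks ($\Var=(n+2)/12$ at $a=b=1$, $n/12$ at $a=b=0$ via $A_{n,\infty,\infty}\eqd 1+A_{n-2,1,1}$, and invariance under $a\leftrightarrow b$ with $A\mapsto n-A$) agree with Examples \ref{E11} and \ref{Eoooo}. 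Like the paper (``after some calculations''), you leave the final assembly of the variance as routine polynomial algebra, which is a fair level of detail given that the paper does the same.
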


\begin{remark}\label{RT2}
  In the symmetric case $\ga=\gb$ we thus obtain $\E (A_{n,\ga,\ga})=n/2$; 
this is also obvious by symmetry, since $A_{n,\ga,\ga}\eqd B_{n,\ga,\ga}$
by \refR{Rsymm}.
\end{remark}

\begin{theorem}\label{Tneg}
  The \pgf{} $g_A(x)$ of the random variable $A\ngab$ has all its roots
  simple and on the negative halfline $(-\infty,0]$.
As a consequence, for any given $n,\ga,\gb$
there exist $p_1,\dots,p_n\in\oi$ 
such that
\begin{equation}\label{tneg}
  A\ngab\eqd \sumin \Be(p_i),
\end{equation}
where $\Be(p_i)$ is a Bernoulli random variable with parameter $p_i$ 
and the  summands are independent.
It follows that the distribution of $A\ngab$ and the sequence
$\vx\ab(n,k)$, $k\in\bbZ$, are
unimodal and log-concave.
\end{theorem}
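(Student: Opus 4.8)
The plan is to deduce all three assertions from a single fact: that the polynomial $\pnab$ (respectively $\tpnoo$ in the case $\ga=\gb=\infty$) has only real, simple roots, all lying in $(-\infty,0]$. By \refT{T1} the \pgf{} $g_A$ is in every case a positive scalar multiple of this polynomial, so it has exactly the same roots; hence it suffices to locate the roots of $\pnab$ and $\tpnoo$.

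The heart of the matter is the claim that, for $a,b>0$, the polynomial $\pnab$ has $n$ distinct roots, all in $(-\infty,0)$. I would prove this by induction on $n$ from the differential recursion \eqref{pnab-rec}, using the classical interlacing argument for Eulerian-type polynomials. Suppose $P_{n-1,a,b}$ has simple roots $r_1<\dots<r_{n-1}<0$, and set $Q\=\pnab$. From \eqref{pnab-rec} one reads off that $Q$ has degree $n$ with leading coefficient $b^n>0$, that $Q(0)=a^n>0$, and that $Q(r_i)=r_i(1-r_i)P_{n-1,a,b}'(r_i)$ at each root $r_i$ of $P_{n-1,a,b}$. Since $r_i<0$ forces $r_i(1-r_i)<0$, while simplicity of the interlacing roots gives $\sign\bigpar{P_{n-1,a,b}'(r_i)}=(-1)^{(n-1)-i}$, the values of $Q$ at the ordered points $-\infty,r_1,\dots,r_{n-1},0$ have strictly alternating signs $(-1)^n,(-1)^{n-1},\dots,(-1)^0$. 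The intermediate value theorem then yields one root of $Q$ in each of the $n$ intervals $(-\infty,r_1),(r_1,r_2),\dots,(r_{n-1},0)$; as $\deg Q=n$, these exhaust the roots and are simple, closing the induction (the base case being $P_{1,a,b}=a+bx$).

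The degenerate parameter values need small adjustments but no new idea. For $a=0$, $b>0$ the constant term of $\pnab$ vanishes while the coefficient of $x$ equals $b\neq0$, so $0$ is a simple root; the same interlacing induction applies with the largest root of $P_{n-1,0,b}$ equal to $0$, the only change being to track the sign of $Q$ at $0^-$. The case $a>0$, $b=0$ then follows from the symmetry $A_{n,\ga,\gb}\eqd n-A_{n,\gb,\ga}$ of \refR{Rsymm}, under which reversing the coefficient sequence reflects the roots to their (negative, simple) reciprocals. Finally, when $\ga=\gb=\infty$ we invoke $\tpnoo(x)=xP_{n-2,1,1}(x)$ from \eqref{tpn00}: its roots are $0$ together with the simple negative roots of $P_{n-2,1,1}$ provided by the generic case, and $P_{n-2,1,1}(0)=1\neq0$ keeps the root at $0$ simple.

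With real-rootedness in hand, the remaining conclusions are standard. A \pgf{} with non-negative coefficients and only real roots has all of them in $(-\infty,0]$, so after imposing $g_A(1)=1$ we may write $g_A(x)=\prod_{i=1}^n(q_i+p_ix)$ with $p_i\=1/(1+|r_i|)\in\oi$ and $q_i\=1-p_i$ (padding with $p_i=0$ when $\deg g_A<n$); this is precisely the \pgf{} of $\sum_{i=1}^n\Be(p_i)$ with independent summands, which is \eqref{tneg}. Log-concavity of the coefficients, hence of $\vx\ab(n,k)$, follows from Newton's inequalities applied to the real-rooted $g_A$, and a non-negative log-concave sequence with no internal zeros is unimodal. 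I expect the main obstacle to be the bookkeeping in the inductive step: pinning down $\deg Q$ and its leading coefficient so that the $n$ guaranteed sign changes account for all the roots (and thus force simplicity), and then verifying that this counting survives intact in the boundary cases $a=0$ or $b=0$, where the degree of $\pnab$ drops or a root migrates to the origin.
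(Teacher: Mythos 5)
Your proposal is correct and takes essentially the same route as the paper: the paper establishes real-rootedness of $\pnab$ in \refT{TP0} by the very same Frobenius-style sign-change induction on the recursion \eqref{pnab-rec} that you use, and then deduces the Bernoulli factorization, log-concavity and unimodality just as you do. Your only deviations are cosmetic --- the paper handles the boundary cases $a=0$ or $b=0$ via \refL{LP1} (and the case $a=b=0$ via \eqref{tvx00}--\eqref{tpn00}) rather than your symmetry argument, and it obtains log-concavity by induction from \eqref{tneg} rather than from Newton's inequalities.
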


These results lead to a central limit theorem: 
\begin{theorem}\label{TCLT}
Let $\ga,\gb\in(0,\infty]$ be fixed and let \ntoo. Then $A\ngab$ is
  asymtotically normal:
  \begin{equation}\label{tclt1}
\frac{A\ngab-\E A\ngab}{(\Var A\ngab)\qq}\dto N(0,1),
  \end{equation}
or, more explicitly,
  \begin{equation}\label{tclt2}
\frac{A\ngab-n/2}{\sqrt n}\dto N(0,1/12).	
  \end{equation}
Moreover, a corresponding local limit theorem holds: 
\begin{equation}\label{tclt3a}
  \P(A\ngab=k)
=\bigpar{2\pi\Var A\ngab}\qqw
\Bigpar{e^{-\frac{(k-\E A\ngab)^2}{2\Var A\ngab}}+o(1)}, 
\end{equation}
as \ntoo, uniformly in $k\in\bbZ$, or, more explicitly,
\begin{equation}\label{tclt3b}
  \P(A\ngab=k)=\sqrt{\frac6{\pi n}}\Bigpar{e^{-6(k-n/2)^2/n}+o(1)},
\end{equation}
as \ntoo, uniformly in $k\in\bbZ$.
\end{theorem}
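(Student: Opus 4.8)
The plan is to exploit the decomposition of $A\ngab$ into independent Bernoulli summands furnished by \refT{Tneg}, which reduces both the central and the local limit theorem to classical statements about sums of independent, uniformly bounded lattice variables. By \refT{Tneg} we may write $A\ngab\eqd\sumin\Be(p_i)$ with independent summands, where the $p_i=p_i^{(n)}$ depend on $n,\ga,\gb$; in particular $\E A\ngab=\sumin p_i$ and $\Var A\ngab=\sumin p_i(1-p_i)$. From \refT{T2}, as \ntoo{} with $\ga,\gb$ fixed, one reads off $\E A\ngab=n/2+O(1)$ and $\Var A\ngab=n/12+o(n)$, so that in every case $\Var A\ngab\to\infty$ at the linear rate $n/12$.

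Given this, \eqref{tclt1} is a direct application of the Lyapunov (or Lindeberg--Feller) central limit theorem for triangular arrays. Since $\bigabs{\Be(p_i)-p_i}\le1$, the third absolute central moments satisfy $\E\bigabs{\Be(p_i)-p_i}^3\le p_i(1-p_i)$, whence the Lyapunov ratio is at most $(\Var A\ngab)^{-1/2}\to0$. This yields \eqref{tclt1}; the explicit form \eqref{tclt2} then follows by writing
\begin{equation*}
\frac{A\ngab-n/2}{\sqrt n}
=\frac{A\ngab-\E A\ngab}{(\Var A\ngab)\qq}\cdot\Bigpar{\frac{\Var A\ngab}{n}}\qq
+\frac{\E A\ngab-n/2}{\sqrt n}
\end{equation*}
and using $\Var A\ngab/n\to1/12$ together with $\E A\ngab-n/2=O(1)$.

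For the local limit theorem I would argue by Fourier inversion. As $A\ngab$ is integer valued,
\begin{equation*}
\P(A\ngab=k)=\frac1{2\pi}\int_{-\pi}^{\pi}\gf_n(t)\,e^{-\ii k t}\dd t,
\qquad \gf_n(t)\=\E e^{\ii tA\ngab}=\prodin\bigpar{1-p_i+p_ie^{\ii t}}.
\end{equation*}
The real-rootedness built into the representation is exactly what makes this tractable: a direct computation gives $\bigabs{1-p_i+p_ie^{\ii t}}^2=1-2p_i(1-p_i)(1-\cos t)$, and hence
\begin{equation*}
\bigabs{\gf_n(t)}\le\expBig{-(1-\cos t)\Var A\ngab}.
\end{equation*}
Since $\Var A\ngab\asymp n$, this decays exponentially in $n$, uniformly for $t\in[-\pi,\pi]$ bounded away from $0$, so the contribution of $\abs t\ge\gd$ to the integral is negligible for any fixed $\gd>0$. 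On the remaining range $\abs t\le\gd$ the standard quadratic expansion of $\log\gf_n(t)$ (legitimate because the summands are bounded and $\Var A\ngab\to\infty$) shows that, after the rescaling $t\mapsto t/(\Var A\ngab)\qq$, $\gf_n$ is uniformly close to the Gaussian characteristic function. Comparing the two integrals gives \eqref{tclt3a} with error $o(1)$ uniform in $k\in\bbZ$; the explicit form \eqref{tclt3b} then follows by inserting $\E A\ngab=n/2+O(1)$ and $\Var A\ngab=n/12+o(n)$, the resulting discrepancies being absorbed into the uniform $o(1)$.

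The routine parts are the two moment asymptotics and the central limit theorem. The main obstacle is the uniformity in $k$ in the local limit theorem: one must control the Fourier integral simultaneously for all integers $k$, and it is precisely the exponential bound on $\bigabs{\gf_n(t)}$ away from the origin --- a consequence of the Bernoulli (real-rooted) structure together with the linear growth of $\Var A\ngab$ --- that does the essential work. The cases $\ga=\infty$, $\gb=\infty$, and $\ga=\gb=\infty$ require no separate treatment, since \refT{Tneg} supplies a Bernoulli decomposition there as well (for $\ga=\gb=\infty$ one has $A\ngab\eqd 1+A_{n-2,1,1}$ by \eqref{tpn00}), and the moment asymptotics are unchanged to leading order.
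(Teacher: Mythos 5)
Your proposal is correct and, for the distributional convergence, coincides with the paper's proof: both use the Bernoulli decomposition $A\ngab\eqd\sumin\Be(p_i)$ from \refT{Tneg}, verify Lyapounov's condition via $\sumin\E|I_i-p_i|^3\le\sumin\E|I_i-p_i|^2=\Var A\ngab$, and use the moment asymptotics of \refT{T2} to pass between \eqref{tclt1} and \eqref{tclt2}. The one place you diverge is the local limit theorem: the paper simply cites a classical result for sums of independent lattice variables (Petrov, Theorem VII.3), whereas you prove it directly by Fourier inversion, using the identity $|1-p_i+p_ie^{\ii t}|^2=1-2p_i(1-p_i)(1-\cos t)$ to obtain $|\gf_n(t)|\le\exp\bigl(-(1-\cos t)\Var A\ngab\bigr)$ and hence uniform exponential decay of the characteristic function away from $t=0$. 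That estimate is exactly the right one --- in particular it disposes of the lattice-span issue automatically, since once $\Var A\ngab\to\infty$ the bound forbids $|\gf_n|$ from being near $1$ anywhere in $[-\pi,\pi]$ except near the origin --- so your route buys a self-contained argument at the cost of writing out the standard Gaussian comparison on $|t|\le\gd$, which you only sketch (one should split at $|t|\asymp(\Var A\ngab)\qqw$ and use $1-\cos t\ge 2t^2/\pi^2$ there); the paper's citation buys brevity. Your treatment of the extreme cases ($\ga$ or $\gb$ infinite) also matches the paper's, since \refT{Tneg} and \refT{T2} are stated so as to cover them, and no case distinction is needed.
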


\begin{remark}
\label{RCLT} 
The proof shows that the central limit theorem in the forms \eqref{tclt1}
and \eqref{tclt3a} holds also if $\ga$ and $\gb$ are allowed to
depend on $n$, provided only that $\Var(A\ngab)\to\infty$, which by
\refT{T2} holds as soon as 
$n^2/(a+b)\to\infty$ or $nab/(a+b)^2\to\infty$;
hence this holds except when
$a$ or $b$ is $\infty$ or tends  to $\infty$ rapidly,
\ie, unless $\ga$ or $\gb$ is 0 or tends to 0 rapidly.
\refE{E0} illustrates that asymptotic normality may fail in extreme cases.
\end{remark}

We can also study the total numbers $\na$ and $\nb$ of symbols $\ga$ and
$\gb$ in a random $\sngab$.
This is simpler, and follows directly from \eqref{zab}, as we show in
\refS{Spf}.
(Recall that in $\na$ and $\nb$, $\ga$ and $\gb$ are symbols and not
parameter values.)

\begin{theorem}
  \label{TN1}
Let $\ga,\gb\in(0,\infty]$, and let $a\=\ga\qw$, $b\=\gb\qw$.
The joint \pgf{} of $\na$ and $\nb$ for the random \st{} $\sngab$ is
\begin{equation}\label{tn1}
  \E\gab\lrpar{x^{\na} y^{\nb}}
=
\prod_{i=0}^{n-1}\frac{\ga x+\gb y+i\ga\gb xy}{\ga+\gb+i\ga\gb}
=
\prod_{i=0}^{n-1}\frac{b x+a y+i xy}{a+b+i}.
\end{equation}
In other words,
\begin{equation}
  \label{tn1x}
  \bigpar{\na,\nb}\eqd\lrpar{\sumini I_i, \sumini J_i},
\end{equation}
where $(I_i,J_i)$ are independent pairs of \oivar{s} with the
distributions
\begin{equation}\label{tn1b}
  \P(I_i=\iota,J_i=\iota')
=
\begin{cases}
  0, & (\iota,\iota')=(0,0), \\
  \frac{b}{a+b+i}, & (\iota,\iota')=(1,0), \\
  \frac{a}{a+b+i}, & (\iota,\iota')=(0,1), \\
  \frac{i}{a+b+i}, & (\iota,\iota')=(1,1). 
\end{cases}
\end{equation}
In particular, the marginal distributions are 
\begin{align}\label{tn1c}
 I_i\sim\Be\Bigpar{1-\frac{a}{a+b+i}},&&&J_i\sim\Be\Bigpar{1-\frac{b}{a+b+i}}. 
\end{align}
Hence,
\begin{align}
  \E \na &= \sumini \Bigpar{1-\frac{a}{a+b+i}}
= n - \sumini \frac{a}{a+b+i}, \label{tn1e}\\
  \Var \na &= \sumini \frac{a}{a+b+i}\Bigpar{1-\frac{a}{a+b+i}}
\label{tn1var}, \\
\Cov(\na,\nb)& = - \sumini \frac{ab}{(a+b+i)^2}. \label{tn1cov}
\end{align}

In the case $\ga=\gb=\infty$ ($a=b=0$) and $i=0$, we interpret
$\frac{a}{a+b+i}=\frac{b}{a+b+i}=\frac12$
and $\frac{i}{a+b+i}=0$ in \eqref{tn1b}--\eqref{tn1cov},
and
the factor in \eqref{tn1} as $(x+y)/2$.
\end{theorem}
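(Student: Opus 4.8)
The plan is to reduce everything to the single observation that weighting a tableau by $x^{\na}y^{\nb}$ merely rescales the parameters $\ga,\gb$. First I would note that for every $S\in\css_n$ one has $\wt(S)\,x^{\na(S)}y^{\nb(S)}=(\ga x)^{\na(S)}(\gb y)^{\nb(S)}$, so by the definition \eqref{pab} of $\Pgab$,
\[
\E\gab\lrpar{x^{\na}y^{\nb}}
=\frac{1}{Z_n(\ga,\gb)}\sum_{S\in\css_n}(\ga x)^{\na(S)}(\gb y)^{\nb(S)}
=\frac{Z_n(\ga x,\gb y)}{Z_n(\ga,\gb)} .
\]
Then I would substitute the explicit product \eqref{zab}, namely $Z_n(\ga,\gb)=\prod_{i=0}^{n-1}(\ga+\gb+i\ga\gb)$, into numerator and denominator; the numerator becomes $\prod_{i=0}^{n-1}(\ga x+\gb y+i\ga\gb xy)$, and cancelling factor by factor gives the first product in \eqref{tn1}. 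Dividing each factor through by $\ga\gb$ and writing $a=\ga\qw$, $b=\gb\qw$ gives the second product $\prod_{i=0}^{n-1}\frac{bx+ay+ixy}{a+b+i}$.

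Next I would extract the probabilistic content. Each factor $\frac{bx+ay+ixy}{a+b+i}$ is itself a bivariate \pgf: it has no constant term and the coefficients of $x$, $y$ and $xy$ are $\frac{b}{a+b+i},\frac{a}{a+b+i},\frac{i}{a+b+i}$, which are nonnegative and sum to $1$. Hence it is the joint \pgf{} of a pair $(I_i,J_i)$ of \oivar{s} with the distribution \eqref{tn1b}, and because the \pgf{} in \eqref{tn1} factors as a product over $i$ the pairs are independent; this is exactly the representation \eqref{tn1x}, while setting $y=1$ (resp.\ $x=1$) in a factor yields the marginals \eqref{tn1c}. The moment formulas then follow from independence across $i$: \eqref{tn1e} and \eqref{tn1var} are $\sum_i\E I_i$ and $\sum_i\Var I_i$, while $\Cov(\na,\nb)=\sum_i\Cov(I_i,J_i)$ since distinct pairs are independent, and a one-line computation gives $\Cov(I_i,J_i)=\frac{i}{a+b+i}-\frac{(a+i)(b+i)}{(a+b+i)^2}=-\frac{ab}{(a+b+i)^2}$, which is \eqref{tn1cov}.

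The only delicate point is the degenerate case $\ga=\gb=\infty$, i.e.\ $a=b=0$, where the $i=0$ factor reads $0/0$. Here I would appeal to the limiting definition of $\sngab$ in \refD{Dsngab}: along the diagonal $a=b$ the $i=0$ factor $\frac{bx+ay}{a+b}$ equals $\frac{x+y}{2}$, and every factor with $i\ge1$ tends to $xy$ as $a=b\to0$, giving the stated interpretation of \eqref{tn1}. There is no real obstacle; the whole argument is the single generating-function identity above together with bookkeeping, the only care needed being to resolve this $0/0$ factor by a limit rather than by direct substitution.
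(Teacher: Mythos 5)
Your proposal is correct and follows essentially the same route as the paper's proof: the identity $\E\gab\lrpar{x^{\na}y^{\nb}}=Z_n(\ga x,\gb y)/Z_n(\ga,\gb)$ combined with the product formula \eqref{zab}, factorization of the resulting product into independent pairs $(I_i,J_i)$, and summation of means, variances and covariances; your explicit verification of the factor as a bivariate \pgf{} and the one-line covariance computation are just details the paper leaves implicit. The only minor difference is that the paper handles every case with $\ga=\infty$ or $\gb=\infty$ (not only $\ga=\gb=\infty$) by taking limits, since the weight/partition-function derivation literally requires finite parameters; your limiting argument for $a=b=0$ extends verbatim to those easier cases.
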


\begin{theorem}
  \label{TN2}
Let $\ga,\gb\in(0,\infty]$ be fixed and let \ntoo.
Then, with $a\=\ga\qw$ and $b\=\gb\qw$, 
\begin{align}
  \E \na &= n - a \log n + O(1) \label{tn2e}\\
  \Var \na &= a\log n + O(1), \label{tn2var}\\ 
\Cov(\na,\nb)& = O(1). \label{tn2cov}
\end{align}
Furthermore,
\begin{align}
  \frac{\na-\E\na}{\sqrt{\log n}}&\dto N(0,a), \label{tn2a}\\
  \frac{\nb-\E\nb}{\sqrt {\log n}}&\dto N(0,b), \label{tn2b}
\end{align}
jointly, with independent limits.
\end{theorem}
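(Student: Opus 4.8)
The plan is to derive all five assertions from the representation \eqref{tn1x}--\eqref{tn1b} of \refT{TN1}, which writes $\na$ and $\nb$ as sums of independent, uniformly bounded $0/1$-summands. First I would treat the mean, variance and covariance by estimating the exact sums \eqref{tn1e}--\eqref{tn1cov}. Set $c\=a+b$ and assume $c>0$ (the case $c=0$, \ie{} $\ga=\gb=\infty$, is degenerate and handled at the end). By comparison with $\int_0^n(c+t)\qw\dd t$ one gets $\sum_{i=0}^{n-1}(c+i)\qw=\log n+O(1)$, while $\sum_{i=0}^{n-1}(c+i)\qww=O(1)$ since the series converges. Feeding these into \eqref{tn1e} gives $\E\na=n-a\log n+O(1)$; in \eqref{tn1var} the term $\sum_i a(c+i)\qw$ contributes $a\log n+O(1)$ and the subtracted $\sum_i a^2(c+i)\qww$ is $O(1)$, so $\Var\na=a\log n+O(1)$; and \eqref{tn1cov} is $O(1)$ directly. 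This establishes \eqref{tn2e}--\eqref{tn2cov}.

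For the marginal limit laws I would apply the Lindeberg central limit theorem to $\na=\sum_{i=0}^{n-1}I_i$. Each centred summand satisfies $|I_i-\E I_i|\le1$, and (when $a>0$) $\Var\na\to\infty$; hence for fixed $\eps>0$ and all large $n$ we have $\eps(\Var\na)\qq>1$, so every Lindeberg truncation $\ett{|I_i-\E I_i|>\eps(\Var\na)\qq}$ vanishes and the Lindeberg condition holds trivially. Thus $(\na-\E\na)/(\Var\na)\qq\dto N(0,1)$, and since $\Var\na\sim a\log n$ an application of Slutsky's lemma converts this into \eqref{tn2a}. The statement \eqref{tn2b} for $\nb$ follows identically, or from the symmetry $\ga\leftrightarrow\gb$ of \refR{Rsymm}, which swaps $a$ and $b$.

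To upgrade the two marginals to a joint statement with \emph{independent} limits I would use the Cram\'er--Wold device. For fixed reals $s,t$, \eqref{tn1x} gives $s\na+t\nb=\sum_{i=0}^{n-1}(sI_i+tJ_i)$, again a sum of independent summands, each bounded by $|s|+|t|$; the dependence of $I_i$ and $J_i$ within a single pair is irrelevant, as only independence across $i$ is used. Its variance equals $s^2\Var\na+2st\Cov(\na,\nb)+t^2\Var\nb=(s^2a+t^2b)\log n+O(1)$ by the first step, so the same bounded-summand CLT yields $(s\na+t\nb-\E(s\na+t\nb))/\sqrt{\log n}\dto N(0,s^2a+t^2b)$. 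As $s^2a+t^2b$ is precisely the variance of $sX+tY$ with $X\sim N(0,a)$ and $Y\sim N(0,b)$ independent, Cram\'er--Wold delivers the joint convergence with independent limits.

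The sum estimates are routine; the only care needed is for the degenerate cases, which I regard as the (mild) main obstacle. When $a=0$ ($\ga=\infty$) we have $I_i\equiv1$, so $\na=n$ is constant and $N(0,0)$ is read as the point mass at $0$---consistent with every display; similarly for $b=0$, and for $\ga=\gb=\infty$ one uses \eqref{tn1b} with the conventions fixed in \refT{TN1}. The Cram\'er--Wold formulation absorbs all of these automatically, since $s^2a+t^2b=0$ produces exactly the correct degenerate limit.
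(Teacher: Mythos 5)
Your proposal is correct and follows essentially the same route as the paper: the moment estimates come from the exact formulas \eqref{tn1e}--\eqref{tn1cov} of \refT{TN1}, the limit laws from a standard CLT applied to the independent bounded summands in \eqref{tn1x}, and the independence of the limits from the fact that $\Cov(\na,\nb)=O(1)=o(\log n)$. The only difference is presentational: you make explicit (via Lindeberg and Cram\'er--Wold) what the paper compresses into a reference to the proof of \refT{TCLT} and a remark on the vanishing normalized covariance.
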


\begin{remark}
A local limit theorem holds too. 
Moreover, \refT{TN1} implies that $n-\na$ can be approximated 
in total variation sense by a Poisson distribution  $\P(n-\E\na)$,
see \eg{} \cite[Theorem 2.M]{SJI}. We omit the details.
\end{remark}

\begin{remark}
  We can similarly also study the joint distribution of, \eg, $\na$ and $A$
  (the total number of $\ga$'s and the number on the diagonal), but we leave
  this to the reader.
\end{remark}

The results above show that the effects of changing the parameters $\ga$ and
$\gb$ are surprisingly small. Typically, probability weights of the type
\eqref{w} (which are common in statistical physics) shift the distributions
of the random variables considerably, but here the effects in \eg{} Theorems
\refand{T2}{TN2} are only second-order. The reason seems to be that the
variables are so constrained; we have $\na,\nb\le n$ and by \refT{TN1}, both
are close to their maximum and thus the weights do not differ as much
between different random \st{x} as might be expected.

\begin{remark}
  In order to get stronger effects, we may let the weights tend to 0 as
  \ntoo. For example, taking $\ga=1/(sn)$ and $\gb=1/(tn)$ for some fixed
  $s,t>0$, and thus $a=sn$, $b=tn$, we obtain by \refT{T2}
  \begin{align}
\E(A\ngab)
&=
\frac{2t+1}{2(s+t+1)}n+O(1),
\\
\Var(A\ngab)
&
=
\frac{1+4s+4t
+6(s+t)^2
+12st(s+t)}
{12(s+t+1)^3}	
\,n+O(1).	
  \end{align}
A central limit theorem holds by \refR{RCLT}. 
Similarly, one easily shows joint asymptotic normality for $\na,\nb$ in this
case too; unlike the case of fixed $\ga$ and $\gb$ in \refT{TN2}, the limits
are now dependent normal variables. We omit the details.
 Note that by \refT{Tsub}, the central part of a uniformly random \gabst,
say the part comprising the middle third of the rows and columns, is an
example of this type.
\end{remark}

We discuss some examples in \refS{Sex}. 
Sections \ref{SEuler}--\ref{Spol} contain further preliminaries, and the
proofs of the theorems above are given in \refS{Spf}.
Sections \ref{Ssub} and~\ref{Sloc} contain further results on subtableaux
and on the positions of the symbols in a random \st.
The limiting case $\ga=\gb=\infty$ is studied in greater detail in \refS{Smax}. 
\refS{Surn} discusses an urn model which gives the same distribution as
$A\ngab$. 
\refS{Sperm} discusses, as said above, some other, equivalent,
types of tableaux. \refS{SASEP}, finally, describes briefly the connection
to ASEP mentioned above.

\section{Special cases}\label{Sex}

\begin{example}\label{E22}
$\ga=\gb=2$. As said above, this yields 
the uniformly random staircase
tableaux studied by \citet{d-hh}.
More precisely, in the notation of \refR{R4}, the uniformly random staircase
tableaux is $S_{n,1,1,1,1}$, which is obtained from $S_{n,2,2}$ by a simple
random replacement of symbols.

The main results of \cite{d-hh} can be recovered as special cases of the
theorems above, with $a=b=1/2$.
Note that in this case, the formulas in \refT{T2} simplify to 
$\E(A_{n,2,2})=n/2$ (see \refR{RT2})
and $\Var(A_{n,2,2})=(n+1)/12$.

Recall that the number of all \st{x} of size $n$ is 
$Z_n(1,1,1,1)=Z_n(2,2)=4^n n!$,
see \eqref{sim_gen} and \eqref{zab}.
\end{example}

\begin{example}\label{E11}
  $\ga=\gb=1$. This yields the uniformly random \gabst{} $S_{n,1,1}$.
As said above, the number of \gabst{x} of size $n$ is $Z_n(1,1)=(n+1)!$.
Indeed, \citet{cw} gave a bijection between \gabst{x} of size $n$
and permutation tableaux of size (length) $n+1$, and there are several
bijections 
between the latter and permutations of size $n+1$ \cite{SW,CN};
see further \refS{Sperm}. 
\gabst{x} are further studied in \cite{cssw,cd-h}.

The theorems above, with $a=b=1$, yield results on uniformly random \gabst{x}.
For example, \refT{T1} shows, using \eqref{v11}, that the distribution of
$A_{n,1,1}$ is given by the Eulerian numbers:
\begin{equation}\label{euler11}
  \P(A_{n,1,1}=k)=\frac{\vx_{1,1}(n,k)}{(n+1)!}
=\frac{\euler{n+1}k}{(n+1)!}.
\end{equation}
In other words, the number of \gabst{x} of size $n$ with $k$ $\ga$'s on the
diagonal is $\euler{n+1}k$. 
(This follows also by the bijections mentioned above between
\gabst{x} and permutation tableaux \cite{cw} and between
the latter and permutations \cite{CN}.)
Theorems \refand{Tneg}{TCLT} give in this case well-known results for
Eulerian numbers, see
\cite{Frobenius} and \cite{Carlitz-asN}, respectively.

Furthermore, the formulas in \refT{T2} simplify and yield
$\E A_{n,1,1}=n/2$ (see \refR{RT2}) and $\Var A_{n,1,1}=(n+2)/12$.
As another example, 
\refT{TN1} shows that
\begin{equation}\label{e11}
  n-\na\eqd \sumini (1-I_i) \sim 
\sum_{i=0}^{n-1} \Be\Bigparfrac{1}{i+2}
= \sum_{i=2}^{n+1} \Be\Bigparfrac{1}{i},
\end{equation}
with the summands independent; note that this has the same distribution as
$C_{n+1}-1$, where $C_{n+1}$ is the number of cycles in a random permutation of
size $n+1$, or, equivalently, the number of maxima (records) in such a
random permutation. (Again, a bijective proof can be given using the
bijections with permutation tableaux and permutations
in  \cite{cw} and \cite{CN}.)
See also \refS{Sperm}.
\end{example}

\begin{example}\label{E21}
$\a=2$, $\b=1$ corresponds to staircase tableaux without
$\delta$'s briefly studied in \cite{cssw}. 
The number of such \st{x} is, by \eqref{zab},
\begin{equation}
 Z_n(2,1)=2^n(3/2)\rise{n} = \prod_{i=0}^{n-1}(3+2i)=(2n+1)!!,
\end{equation}
 see \cite{cssw,cd-h}. 
Our theorems yield results on random $\gd$-free \st{x}.
\end{example}

\begin{example}\label{Eoo}
  $\ga=\infty$.
This means that we take the limit as $\ga\to\infty$ in \eqref{pab}, which
means that we have a non-zero probability only for staircase
tableaux with the maximum number of symbols $\ga$, \ie, with $\na=n$.
For such \gabst{x}, the probability is proportional to $\gb^{\nb}$.

We let $\cSx_n\subset \css_n$ be the set of such \gabst{} of size $n$;
by \ref{stag}, these are the \gabst{} of size $n$
with exactly one $\ga$ in each column.
(Such staircase tableaux were studied in
\cite{cd-h}.)
We define the corresponding generating function
\begin{equation}\label{zxn}
  \Zx_n(\gb)\=\sum_{S\in\cSx_n} \gb^{\nb} 
=  \lim_{\ga\to\infty} \ga^{-n} Z_n(\ga,\gb)
=\prod_{i=0}^{n-1}(1+i\b),
\end{equation}
where the final equality follows from \eqref{zab}.
Thus, $S_{n,\infty,\gb}$ is the random element of $\cSx_n$ with the
distribution 
$\P(S_{n,\infty,\gb}=S)=\gb^{\nb(S)}/\Zx_n(\gb)$. 
\end{example}

\begin{example}\label{Eoo1}
$\ga=\infty$, $\gb=1$.
As a special case of the preceding example, $S_{n,\infty,1}$ is a uniformly
random element of $\cSx_n$. 
By \eqref{zxn}, the number of \gabst{x} of size $n$ with $n$ $\ga$'s is
\begin{equation}
  \Zx_n(1)=n!.
\end{equation}
Hence, the probability that a uniformly random \gabst{} has the maximum
number $n$ of $\ga$'s is $\Zx_n(1)/Z_n(1,1)=n!/(n+1)!=1/(n+1)$.
(See also \refT{TN1} and \eqref{e11}.)

The theorems above, with $a=0$ and $b=1$, yield results on uniformly random
\gabst{x} with $n$ $\ga$'s (\ie, one in each column).
For example, \refT{T1} shows, using \eqref{esymm}, that the distribution of
$A_{n,\infty,1}$ is given by the Eulerian numbers:
\begin{equation}\label{euleroo1}
  \P(A_{n,\infty,1}=k)=\frac{\vx_{0,1}(n,k)}{n!}
=\frac{\euler{n}{k-1}}{n!}.
\end{equation}
In other words, the number of \gabst{x} of size $n$ with $n$ $\ga$'s of which
$k$ are on the diagonal is $\euler{n}{k-1}$. 
(A bijective proof is given in \cite{cd-h}.)
By symmetry, counting instead the number of $\gb$'s on the diagonal, by
\eqref{v10}, 
\begin{equation}\label{euleroob}
  \P(B_{n,\infty,1}=k)=
  \P(A_{n,1,\infty}=k)=\frac{\vx_{1,0}(n,k)}{n!}
=\frac{\euler{n}{k}}{n!}.
\end{equation}
Compare with \refE{E11}, where also the distributions of $A$ and $B\eqd A$ are
given by Eulerian numbers. We see that by \eqref{euleroo1}--\eqref{euleroob} and
\eqref{euler11} that 
$A_{n,\infty,1}\eqd A_{n-1,1,1}+1$
and 
$B_{n,\infty,1}\eqd A_{n,\infty,1}-1\eqd A_{n-1,1,1}\eqd B_{n-1,1,1}$.

The formulas in \refT{T2} simplify and yield
$\E A_{n,\infty,1}=(n+1)/2$ and $\Var A_{n,\infty,1}=(n+1)/12$.
As another example, 
\refT{TN1} shows that
\begin{equation}
  n-\nb\eqd \sumini (1-J_i) \sim 
\sum_{i=0}^{n-1} \Be\Bigparfrac{1}{i+1}
= \sum_{i=1}^{n} \Be\Bigparfrac{1}{i},
\end{equation}
with the summands independent;  this has the same distribution as
$C_{n}$, with $C_{n}$ as in the corresponding result in \refE{E11}.
(A bijective proof is given in \cite{cd-h}.)
\end{example}

\begin{example}
  \label{Eoooo}
$\ga=\gb=\infty$. This means that we take the limit as $\ga=\gb\to\infty$ in 
\eqref{pab}, which means that we have a non-zero probability only for
\gabst{} with the maximum number of symbols.
These tableaux correspond to the terms with maximal total degree in
$Z_n(\ga,\gb)$, and it follows from \eqref{zab} that they have $2n-1$
symbols. (We assume $n\ge1$.)

We let $\cSxx_n\subset \css_n $ be the set of \gabst{} with $\na+\nb=2n-1$;
thus $S_{n,\infty,\infty}$ is a uniformly random element of $\Zxx$.

We further define the corresponding generating function
\begin{equation}
  \Zxx_n(\ga,\gb)\=\sum_{S\in\cSxx_n}\ga^{\na} \gb^{\nb} .
\end{equation}
This can be obtained by extracting the terms with largest degrees in
\eqref{zab}, and thus
\begin{equation}
  \Zxx_n(\ga,\gb)
=(\ga+\gb)\prod_{i=1}^{n-1}(i\ga\gb)
=(n-1)!\, \bigpar{\ga^{n}\gb^{n-1}+\ga^{n-1}\gb^{n}}.
\end{equation}
Hence there are $2(n-1)!$ tableaux in $\cSxx_n$; 
$(n-1)!$ with $n$ $\ga$'s and $n-1$ $\gb$'s, and
$(n-1)!$ with $n-1$ $\ga$'s and $n$ $\gb$'s.
See further \refS{Smax}.
(It follows that the corresponding number of \st{x} with $2n-1$ symbols
$\ga,\gb,\gam,\gd$ is 
$2^{2n}(n-1)!$, see  \cite{cd-h}.) 

By \refT{T1} and \eqref{x} below, assuming $n\ge2$, 
\begin{equation}
\P\bigpar{A\noooo=k}
=\frac{\tvx_{0,0}(n,k)}{(n-1)!}= \frac{\euler {n-1}{k-1}}{(n-1)!},
\end{equation}
and thus by \eqref{euler11} $A\noooo\eqd A_{n-2,1,1}+1$.
\end{example}
  
\begin{example}\label{E0} 
 $\gb=0$. This gives weight $0$ to any staircase tableaux with a symbol
  $\gb$, so only tableaux with just the 
symbol 
$\ga$ may occur. 
By \ref{stdiag} and \ref{stag} in the definition, the only such tableau is
the one with 
$\ga$ in every diagonal box, and no other symbols.
This limiting case is thus trivial, with $S_{n,\ga,0}$ deterministic (and
independent of the parameter $\ga$), and 
$\na=A\ngab=n$, $\nb=B\ngab=0$, 
and $Z_n(\ga,0)=\ga^n$.

This case (and the symmetric $\ga=0$) is excluded from most of our results,
but since it is trivial, the reader can easily supplement corresponding,
trivial, results for it. Note that this case occurs as a natural limiting
case when $\gb\to0$.
\end{example}

\begin{example}\label{E00} 
  $\ga=\gb=0$. This case is really excluded, since it would give weight 0 to
  every \gabst. However, we can define it as the limit as
  $\ga=\gb\to0$. This gives a non-zero probability only to \gabst{x} with a
  minimum number of symbols, \ie, with $n$ symbols on the diagonal and
  no others. There are $2^n$ such \gabst{x}, and all get the same probability,
  so $S_{n,0,0}$ is obtained by putting a random symbol in each diagonal
  box, uniformly and independently. 
This leads to a classical case and we will not discuss it any
  further.

More generally, taking the limit as $\ga,\gb\to0$ with
$\ga/(\ga+\gb)\to\rho\in\oi$ yields an \gabst{} with symbols only on the
diagonal and
each diagonal box having
symbol $\ga$ with probability $\rho$, independently of the other boxes.
(Cf.\ \refT{Toooorho}.)
\end{example}

\section{Eulerian numbers and polynomials}\label{SEuler}
As a background, we recall some standard facts about Eulerian numbers and
polynomials. 

For $a=1$, $b=0$, the recursion \eqref{vx-rec} is the standard recursion for
\emph{Eulerian numbers} $\euler nk$, 
see \eg{} \cite[Section 6.2]{CM},  \cite[\S26.14]{NIST},
\cite[A008292]{OEIS}; thus
\begin{equation}\label{v10}
  v_{1,0}(n,k) = \euler nk.
\end{equation}
(These are often defined as the number of permutations of $n$ elements with
$k$ descents (or ascents). 
See \eg{} \cite[Section 1.3]{StanleyI}, where also other
relations to permutations are given.) 
The corresponding polynomials
\begin{equation}
  P_{n,1,0}(x)=\sumkon \euler nk x^k
\end{equation}
are known as \emph{Eulerian polynomials}.

Furthermore, the cases $(a,b)=(0,1)$ and $(1,1)$ also lead to Eulerian
numbers, with different indexing:
By \eqref{vx-rec} and induction,
or by \eqref{asymm} below,
\begin{equation}\label{esymm}
    v_{0,1}(n,k) = v_{1,0}(n,n-k)=\euler n{n-k}=\euler{n}{k-1},
\qquad n\ge1,
\end{equation}
(which is non-zero for $1\le k\le n$).
Similarly, by \eqref{vx-rec} and induction,
\begin{equation}\label{v11}
    v_{1,1}(n,k)= v_{1,0}(n+1,k) = \euler {n+1}k,
\qquad n\ge0.
\end{equation}
Equivalently,
\begin{align}\label{epsymm}
P_{n,0,1}(x) &= x P_{n,1,0}(x), &   
P_{n,1,1}(x) &= P_{n+1,1,0}(x).  
\end{align}

Similarly, by the definition \eqref{tvx00} and \eqref{v11}, 
\begin{equation}\label{x}
\tvx_{0,0}(n,k)=  \euler {n-1}{k-1},
\qquad n\ge2,
\end{equation}
and by \eqref{tpn00} and \eqref{epsymm},
\begin{align}
\tP_{n,0,0}(x) =  P_{n-1,0,1}(x)
=x P_{n-1,1,0}(x).  
\end{align}

The Eulerian polynomials can also be defined by the formula
\begin{equation}\label{ep}
  \sumk (k+1)^n x^k = \frac{P_{n,1,0}(x)}{(1-x)^{n+1}}
\end{equation}
or by the (equivalent) generating function
\cite[26.14.4]{NIST} 
\begin{equation}
  \label{gep}
\sumn P_{n,1,0}(x)\frac{z^n}{n!}
=
\frac{1-x}{e^{z(x-1)}-x},
\end{equation}
both found by Euler \cite{E212}.  
(The sums converge for sufficiently small $x$ and $z$ ($|x|<1$ for
\eqref{ep});
alternatively, \eqref{ep}--\eqref{gep} can be seen as formulas for formal
power series.)

The Eulerian polynomials were introduced by Euler 
\cite{E55,E212,E352} and were used by him to calculate the sum of series.
(In particular, Euler used them to calculate the sum of the alternating series 
$\sumki(-1)^{k-1} k^n$ for $n\ge0$
\cite[p.~85]{E352}. 
This series is obviously divergent, which did not
stop Euler; in modern terminology he computed the Abel sum by taking $x=-1$
in \eqref{ep}.)
See also \cite{Hirzebruch} and \cite{Foata}.

\begin{remark}
  Notation has varied. It is now standard to define the Eulerian polynomials
  as our $P_{n,1,0}(x)$, but it was earlier common to use this multiplied by
  $x$,  \ie, our $P_{n,0,1}(x)=xP_{n,1,0}(x)$, with coefficients
  $\euler{n}{k-1}$. 
(Euler himself used both versions:
$P_{n,0,1}$ in \cite{E55} 
and $P_{n,1,0}$ in \cite{E212,E352}.)  
Similarly, notation for Eulerian numbers has varied, see \eg{}
\cite[A008292, A173018 and A123125]{OEIS}.
\end{remark}

\section{The polynomials $\pnab$}\label{Spol}

The numbers $\vab(n,k)$ and polynomials $\pnab(x)$ are defined by
\eqref{vx-rec}--\eqref{pnab-rec} for all real (or complex) $a$ and $b$, but
we are for our purposes only interested in $a,b\ge0$.
We regard $a$ and $b$ as fixed parameters, but we note that the numbers
$\vab(n,k)$ are polynomials in $a$ and $b$ (of degree exactly $n$ in the
non-trivial case $0\le k\le n$).

The case $a=b=0$ is trivial: by 
\eqref{vx-rec} or \eqref{pnab-rec} and induction
\begin{equation}
  v_{0,0}(n,k)=0
\qquad\text{and}\qquad P_{n,0,0}(x)=0
\qquad\text{for all } n\ge1.
\end{equation}

In the case when $a=0$ or $b=0$ we have the following simple relations,
generalizing the results for Eulerian numbers and polynomials in 
\eqref{esymm}--\eqref{epsymm}.

\begin{lemma}\label{LP1}
  For all $n\ge1$,
  \begin{align}
	v_{a,0}(n,k)&=av_{a,1}(n-1,k), \\
	v_{0,b}(n,k)&=bv_{1,b}(n-1,k-1),
\intertext{and, equivalently,}
	P_{n,a,0}(x)&=aP_{n-1,a,1}(x), \\
	P_{n,0,b}(x)&=bxP_{n-1,1,b}(x).
  \end{align}
\end{lemma}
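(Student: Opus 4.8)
The plan is to prove all four identities at once by examining the recursion \eqref{vx-rec}, since the two polynomial statements are, as the lemma itself notes, equivalent reformulations of the two statements about the coefficients $v\ab(n,k)$ (multiply by $x^k$ and sum over $k$, using \eqref{pnab}). So it suffices to establish the two coefficient identities
\begin{equation*}
  v_{a,0}(n,k)=a\,v_{a,1}(n-1,k)
  \qquad\text{and}\qquad
  v_{0,b}(n,k)=b\,v_{1,b}(n-1,k-1),
\end{equation*}
for $n\ge1$. By the $\ga\leftrightarrow\gb$ symmetry reflected in the recursion \eqref{vx-rec} (interchanging $a\leftrightarrow b$ and $k\leftrightarrow n-k$), the second identity follows from the first, so the essential content is the single statement $v_{a,0}(n,k)=a\,v_{a,1}(n-1,k)$.

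First I would treat the base case $n=1$: from Table~\ref{Tab:vab} (or directly from \eqref{vx-rec} with $n=1$) one has $v_{a,0}(1,0)=a$ and $v_{a,0}(1,k)=0$ otherwise, while $v_{a,1}(0,k)=\ett{k=0}$, so $v_{a,0}(1,k)=a\,v_{a,1}(0,k)$ holds. The natural approach for the inductive step is \emph{not} a straightforward induction in $n$ on the claimed identity, because the right-hand side involves $v_{a,1}$ rather than $v_{a,0}$ at the smaller index, and these obey the same recursion but with different $b$-parameter. Instead I would substitute $b=0$ directly into \eqref{vx-rec}. Setting $b=0$ gives
\begin{equation*}
  v_{a,0}(n,k)=(k+a)v_{a,0}(n-1,k)+(n-k)v_{a,0}(n-1,k-1),
  \qquad n\ge1.
\end{equation*}
The cleanest route is then to observe that the sequence $w(n,k)\=v_{a,1}(n,k)$ satisfies \eqref{vx-rec} with $b=1$, and to check that $a\,v_{a,1}(n-1,k)$, viewed as a function of $(n,k)$, satisfies the \emph{same} recursion that $v_{a,0}(n,k)$ does, with matching initial data; by uniqueness of the solution of a first-order recursion from given initial values, the two sequences coincide.

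Concretely, set $u(n,k)\=a\,v_{a,1}(n-1,k)$ and verify that $u$ obeys the $b=0$ recursion above. Plugging in the $b=1$ recursion for $v_{a,1}(n-1,k)=(k+a)v_{a,1}(n-2,k)+(n-k)v_{a,1}(n-2,k-1)$ and comparing with what the $b=0$ recursion demands of $u(n,k)$ in terms of $u(n-1,\cdot)=a\,v_{a,1}(n-2,\cdot)$, the two sides match termwise once one notes that the coefficient $(n-k+b)$ with $b=0$ applied at level $n$ aligns with the coefficient $(n-1-k+b)$ with $b=1$ applied at level $n-1$ after the index shift $n\mapsto n-1$; this is exactly the bookkeeping that makes the shift $n\mapsto n-1$ convert $b=1$ into $b=0$. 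The only genuinely fiddly point—and the step I expect to be the main obstacle—is confirming that these index-shifted coefficients $(k+a)$ and $(n-k)$ really do line up correctly on both sides, i.e. that the ``$+b$'' discrepancy is absorbed exactly by the change of the level from $n-1$ to $n$; this is a short but error-prone comparison of the two recursions, best carried out by writing both expansions explicitly and matching the two terms. Once this termwise verification and the base case are in hand, induction on $n$ closes the argument, and the polynomial identities follow immediately by forming the generating polynomial $\sum_k(\cdot)x^k$, with the extra factor $x$ in $P_{n,0,b}(x)=bxP_{n-1,1,b}(x)$ accounting for the index shift $k\mapsto k-1$ in the second coefficient identity.
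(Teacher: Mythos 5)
Your proof is correct and takes essentially the same approach as the paper: the paper's entire proof of Lemma~\ref{LP1} reads ``Induction, using \eqref{vx-rec} or \eqref{pnab-rec}'', and your termwise verification---including the key bookkeeping point that the coefficient $(n-1)-k+1=n-k$ from the $b=1$ recursion at level $n-1$ matches the coefficient $n-k+0$ at level $n$---is precisely that induction written out in detail, with matching initial data at $n=1$. The one organizational difference, deducing the second identity from the first via the symmetry $v_{a,b}(n,k)=v_{b,a}(n,n-k)$, is harmless, since that symmetry (equation \eqref{asymm} of Theorem~\ref{TP}) is itself established by an independent induction on the same recursion; alternatively, the second identity yields to the identical termwise check, with the index shift now in $k$ rather than $n$.
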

\begin{proof}
  Induction, using \eqref{vx-rec} or \eqref{pnab-rec}.
\end{proof}

We collect some further properties in the following theorems.

\begin{theorem}\label{TP}
For all $a,b$ and $n\ge0$, 
  \begin{align}
	\pnab(0)&=\vab(n,0)=a^n, \label{tp0}
\\
	\vab(n,n)&=b^n, \label{tpn}
\\
  \pnab(1)&=\sumkon \vab(n,k) 
= (a+b)\rise{n}=\frac{\gG(n+a+b)}{\gG(a+b)}. \label{tp1}
  \end{align}  
Furthermore, we have the symmetry
\begin{equation}\label{asymm}
  v_{a,b}(n,k)=v_{b,a}(n,n-k)
\end{equation}
and thus
\begin{equation}\label{psymm}
  P_{n,a,b}(x) = x^n P_{n,b,a}(1/x).
\end{equation}

\end{theorem}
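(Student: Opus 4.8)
The plan is to prove \refT{TP} by establishing each identity through induction on $n$, using the defining recursion \eqref{vx-rec} as the main engine. I would handle the formulas in the order \eqref{tp0}, \eqref{tpn}, \eqref{tp1}, then the symmetry \eqref{asymm}, and finally deduce \eqref{psymm} as an immediate consequence.

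For \eqref{tp0}, note $\vab(n,0)$ picks out the $k=0$ term, so in the recursion the second summand $(n-k+b)\vab(n-1,k-1)$ vanishes at $k=0$ (since $\vab(n-1,-1)=0$), leaving $\vab(n,0)=a\,\vab(n-1,0)$; with base case $\vab(0,0)=1$ this gives $a^n$ by induction. Symmetrically, for \eqref{tpn} the term $k=n$ kills the first summand $(k+a)\vab(n-1,k)$ because $\vab(n-1,n)=0$, so $\vab(n,n)=(0+b)\vab(n-1,n-1)=b\,\vab(n-1,n-1)$, yielding $b^n$. For \eqref{tp1}, I would sum the recursion over all $k$: writing $P=\pnab(1)=\sum_k\vab(n,k)$ and using that shifting the index in the second sum is harmless (all boundary terms vanish), the weights $(k+a)$ and $(n-k+b)$ combine. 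Summing $(k+a)\vab(n-1,k)$ over $k$ and $(n-k+b)\vab(n-1,k-1)$ over $k$ (reindex $j=k-1$ in the latter, so $n-k+b=n-1-j+b$) gives total coefficient $(k+a)+(n-1-k+b)=n-1+a+b$ on each $\vab(n-1,k)$, hence $\pnab(1)=(n-1+a+b)P_{n-1,a,b}(1)$. With $P_{0,a,b}(1)=1$ this telescopes to $(a+b)\rise n$. Alternatively, and more cleanly, this last recurrence is just \eqref{pnab-rec} evaluated at $x=1$, where the derivative term $x(1-x)P'$ vanishes and the prefactor becomes $(n-1+b)+a$; I would cite \eqref{pnab-rec} to get it in one line.

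The symmetry \eqref{asymm} is the step I expect to require the most care, though it is still a straightforward induction. Set $w(n,k)\=v_{b,a}(n,n-k)$ and verify that $w$ satisfies the same recursion and initial conditions as $v_{a,b}(n,k)$; uniqueness of the solution to \eqref{vx-rec} then forces $w=v_{a,b}$. Concretely, from \eqref{vx-rec} applied to $v_{b,a}$ at index $(n,n-k)$,
\begin{equation*}
v_{b,a}(n,n-k)=(n-k+b)v_{b,a}(n-1,n-k)+(k+a)v_{b,a}(n-1,n-1-k).
\end{equation*}
Rewriting the two lower-order terms via the inductive hypothesis as $v_{a,b}(n-1,k-1)$ and $v_{a,b}(n-1,k)$ respectively (matching the shift $n-k=(n-1)-(k-1)$ and $n-1-k=(n-1)-k$), this becomes exactly the right-hand side of \eqref{vx-rec} for $v_{a,b}(n,k)$, so the induction closes; the base case $n=0$ is immediate since both sides equal $\ett{k=0}$. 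The only delicate point is bookkeeping the index shifts so that the roles of $a,b$ and the coefficients $(k+a),(n-k+b)$ line up correctly after reflection, which is why I would write out the single recursion step explicitly rather than wave at it. Finally, \eqref{psymm} follows purely formally: multiplying \eqref{asymm} by $x^k$ and summing,
\begin{equation*}
P_{n,a,b}(x)=\sum_k v_{b,a}(n,n-k)x^k=\sum_j v_{b,a}(n,j)x^{n-j}=x^n P_{n,b,a}(1/x),
\end{equation*}
after the substitution $j=n-k$.
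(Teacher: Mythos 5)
Your proposal is correct and follows exactly the route the paper intends: the paper's proof of this theorem is the one-line ``Induction, using \eqref{vx-rec} or \eqref{pnab-rec}'', and your argument simply fills in the details of that induction (including the index bookkeeping for \eqref{asymm} and the one-line deduction of \eqref{psymm}). Each of your steps checks out against the recursion and boundary conditions, so there is nothing to add.
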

\begin{proof}
  Induction, using \eqref{vx-rec} or \eqref{pnab-rec}.
\end{proof}

\begin{remark}\label{RPsymm}
The symmetries \eqref{asymm}--\eqref{psymm} between $a$ and $b$ are more
evident if we define the 
homogeneous  two-variable polynomials
  \begin{equation}\label{hP}
\hP\nab(x,y)\=\sumkon \vx\ab(n,k)x^k y^{n-k} 
\end{equation}
which satisfy the recursion
\begin{equation}
  \hP_{n,a,b}(x,y)
= \Bigpar{bx+ay+xy\frac{\partial}{\partial x}+xy\frac{\partial}{\partial y}}
\hP_{n-1,a,b}(x,y),
\qquad n\ge1
\end{equation}
and the symmetry
$\hP_{n,a,b}(x,y)=\hP_{n,b,a}(y,x)$.
(Note that 
$\hP\nab(x,y)=y^nP\nab(x/y)$
and $P\nab(x)=\hP\nab(x,1)$.)

Then \eqref{t1} can be written in the symmetric form
\begin{equation}
  \begin{split}
  \E x^{A\ngab}y^{B\ngab}
&=\sumkon \P(A\ngab=k)x^k y^{n-k}
=\frac{\gG(a+b)}{\gG(n+a+b)}\hP\nab(x,y).	
  \end{split}
\end{equation}
However, we find it more convenient to work with polynomials in one variable.
\end{remark}

\begin{theorem}
  \label{TP'}
For all $a,b$ and $n\ge0$, 
  \begin{equation}\label{tp'}
  \pnab'(1)=\sumkon k\vab(n,k) = 
\frac{n(n+2b-1)}{2}(a+b)\rise{n-1}
  \end{equation}
and
  \begin{equation}\label{tp''}
	\begin{split}
  \pnab''(1)&=\sumkon k(k-1)\vab(n,k) 
\\&= 
\frac{n(n-1)(3n^2+(12b-11)n+12b^2-24b+10)}{12}(a+b)\rise{n-2}.
	\end{split}
  \end{equation}
\end{theorem}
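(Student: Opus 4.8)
The plan is to derive both identities by differentiating the polynomial recursion \eqref{pnab-rec} with respect to $x$ and then setting $x=1$. The whole approach rests on one observation: the factor $x(1-x)$ that multiplies $\pnab'$ in \eqref{pnab-rec} vanishes at $x=1$, while its first derivative $1-2x$ equals $-1$ there. Consequently, when we differentiate and evaluate at $x=1$, the top-order derivative term disappears at each stage, and we are left with clean first-order recursions in $n$ for the single numbers $\pnab'(1)$ and $\pnab''(1)$. Throughout I will use the value $\pnab(1)=(a+b)\rise n$ already recorded in \refT{TP}, equation \eqref{tp1}.

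First I would prove \eqref{tp'}. Differentiating \eqref{pnab-rec} once and putting $x=1$ gives
\[
\pnab'(1)=(n-1+b)\,P_{n-1,a,b}(1)+(n+a+b-2)\,P_{n-1,a,b}'(1),\qquad n\ge1,
\]
with $P_{0,a,b}'(1)=0$. Inserting $P_{n-1,a,b}(1)=(a+b)\rise{n-1}$ turns this into a first-order linear recursion with an explicit inhomogeneous term, and a routine induction confirms the closed form $\tfrac{n(n+2b-1)}2(a+b)\rise{n-1}$; the base case $n=1$ is immediate since $P_{1,a,b}(x)=bx+a$, so $P_{1,a,b}'(1)=b$. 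In the inductive step one uses the factorization $(a+b)\rise{n-1}=(n+a+b-2)(a+b)\rise{n-2}$ to expose the common factor $(n+a+b-2)$; after cancelling it, the remaining bracket $(n-1+b)+\tfrac{(n-1)(n+2b-2)}2$ simplifies directly to $\tfrac{n(n+2b-1)}2$.

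Next I would prove \eqref{tp''} in exactly the same way, differentiating \eqref{pnab-rec} twice. Evaluating at $x=1$ and again using $x(1-x)=0$ and $1-2x=-1$ there, the third-derivative term drops out and one obtains
\[
\pnab''(1)=2(n+b-2)\,P_{n-1,a,b}'(1)+(n+a+b-3)\,P_{n-1,a,b}''(1),\qquad n\ge1,
\]
with $P_{0,a,b}''(1)=0$. Substituting the closed form for $P_{n-1,a,b}'(1)$ just obtained reduces this to another first-order linear recursion whose inhomogeneous term is an explicit multiple of $(a+b)\rise{n-2}$, and I would verify the claimed expression by induction; the base cases $n=1,2$ give $P_{1,a,b}''(1)=0$ and $P_{2,a,b}''(1)=2b^2$, both matching. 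I expect this last inductive step to be the main obstacle. After substitution, one uses $(a+b)\rise{n-2}=(n+a+b-3)(a+b)\rise{n-3}$ to give both terms the common factor $(a+b)\rise{n-2}$; cancelling it, the task is to check that
\[
(n+b-2)(n-1)(n+2b-2)+\tfrac{(n-1)(n-2)}{12}\bigl(3(n-1)^2+(12b-11)(n-1)+12b^2-24b+10\bigr)
\]
equals $\tfrac{n(n-1)}{12}\bigl(3n^2+(12b-11)n+12b^2-24b+10\bigr)$. This is a genuine degree-four polynomial identity in $n$ (with parameter $b$), routine but the most error-prone part of the computation. As a consistency check, since $\pnab'(1)/\pnab(1)=\E(A\ngab)$ and $\bigl(\pnab''(1)+\pnab'(1)\bigr)/\pnab(1)=\E\bigl(A\ngab^2\bigr)$, the resulting mean and variance must agree with those recorded in \refT{T2}.
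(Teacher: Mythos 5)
Your proposed proof is correct, but it is not the proof the paper actually writes out: it is precisely the argument that the remark after the theorem statement mentions and omits (``differentiating \eqref{pnab-rec} once or twice and then taking $x=1$''), whereas the paper's real proof in \refS{Spf} runs through the tableau machinery. There, Lemmas \ref{L5} and \ref{L4} identify $\pnab(x)$ with the expansion $\sum_{\ell=0}^n c_{n,\ell}(a+b)\rise{\ell}(x-1)^{n-\ell}$, so every derivative at $x=1$ is read off at once via \eqref{magnus}, and the theorem reduces to computing $c_{n,n-1}$ and $c_{n,n-2}$ from the triangular recursion \eqref{l4c}; these emerge as explicit sums (e.g.\ $c_{n,n-1}=\sum_{m=0}^{n-1}(m+b)$), so the closed forms are \emph{derived} rather than guessed and verified. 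Your route buys self-containedness and generality: it uses only \eqref{pnab-rec} and \eqref{tp1}, needs no tableau combinatorics, and works for all real $a,b$ simultaneously, whereas the paper must first assume $a,b>0$ (Lemmas \ref{L4}--\ref{L5} live in the weighted-tableau setting) and then extend by polynomiality in $a$ and $b$. The price is that you must know the answers in advance and then push through a messy verification. I checked the details you left as routine: your two differentiated recursions at $x=1$ are right (the coefficients $n+a+b-2$, $2(n+b-2)$ and $n+a+b-3$ all come out correctly from the vanishing of $x(1-x)$ and the value $-1$ of $1-2x$ at $x=1$), the base cases match, and the degree-four identity you isolate in the second induction does hold: after dividing by $n-1$ and multiplying by $12$, both sides reduce to $3n^3+(12b-11)n^2+(12b^2-24b+10)n$. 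So your plan goes through exactly as stated.
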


\begin{proof}
  This can be shown by induction, differentiating \eqref{pnab-rec} once or
  twice and then taking $x=1$. We omit the details, and give instead another
  proof in \refS{Spf}.
\end{proof}

\begin{theorem}\label{TP0}
  \begin{romenumerate}[-10pt]
  \item \label{tp++}
If $a,b>0$, then $\vab(n,k)>0$ for $0\le k\le n$, and $\pnab(x)$ is a
polynomial of degree $n$ with $n$ simple negative roots.
  \item \label{tp+0}
If $a>b=0$, then $\vab(n,k)>0$ for $0\le k< n$, and $\pnab(x)$ is a
polynomial of degree $n-1$ with $n-1$ simple negative roots.
  \item \label{tp0+}
If $a=0<b$, then $\vab(n,k)>0$ for $1\le k\le n$, and $\pnab(x)$ is a
polynomial of degree $n$ with $n$ simple roots in $(-\infty,0]$;
one of the roots is $0$, provided $n>0$.
\item \label{tp00}
If $a=b=0$, then $\tvx_{0,0}(n,k)>0$ for $1\le k\le n-1$, and $\tP\noo(x)$ is a
polynomial of degree $n-1$ with $n-1$ simple roots in $(-\infty,0]$;
one of the roots is $0$, provided $n\ge2$.
  \end{romenumerate}
\end{theorem}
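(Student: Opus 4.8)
The plan is to prove part~\ref{tp++} (the case $a,b>0$) directly by induction on $n$, and then to deduce parts \ref{tp+0}--\ref{tp00} from it using \refL{LP1} and the definitions \eqref{tvx00}--\eqref{tpn00}. For the reductions: in case \ref{tp+0} ($a>b=0$), \refL{LP1} gives $P_{n,a,0}(x)=aP_{n-1,a,1}(x)$, so (as $a>0$) $\pnab$ has exactly the roots of $P_{n-1,a,1}$, which by case \ref{tp++} are $n-1$ simple negative numbers, and $v_{a,0}(n,k)=av_{a,1}(n-1,k)>0$ for $0\le k\le n-1$ is immediate. In case \ref{tp0+} ($a=0<b$), \refL{LP1} gives $P_{n,0,b}(x)=bxP_{n-1,1,b}(x)$, whose roots are $0$ together with the $n-1$ simple negative roots of $P_{n-1,1,b}$ supplied by case \ref{tp++}; again $v_{0,b}(n,k)=bv_{1,b}(n-1,k-1)>0$ for $1\le k\le n$. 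In case \ref{tp00} ($a=b=0$) the definition \eqref{tpn00} gives $\tpnoo(x)=xP_{n-2,1,1}(x)$, whose roots are $0$ together with the $n-2$ simple negative roots of $P_{n-2,1,1}$ from case \ref{tp++}, and $\tvxoo(n,k)=v_{1,1}(n-2,k-1)>0$ for $1\le k\le n-1$ by \eqref{tvx00}. Thus everything hinges on case \ref{tp++}.

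For case \ref{tp++} I would argue by induction on $n$, the engine being the recursion \eqref{pnab-rec},
\begin{equation*}
\pnab(x)=\bigpar{(n-1+b)x+a}P_{n-1,a,b}(x)+x(1-x)P_{n-1,a,b}'(x).
\end{equation*}
The base case $n=1$ is clear, since $P_{1,a,b}(x)=a+bx$ has the single simple negative root $-a/b$. For the inductive step ($n\ge2$) assume $P_{n-1,a,b}$ has degree $n-1$ with simple negative roots $r_1<\dots<r_{n-1}<0$. From \eqref{tpn} the leading coefficient of $\pnab$ is $b^n>0$, so $\pnab$ has degree exactly $n$; from \eqref{tp0} we have $\pnab(0)=a^n>0$, so $0$ is not a root.

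The heart of the argument is a sign count at the points $r_i$. Since $P_{n-1,a,b}(r_i)=0$, the recursion collapses to $\pnab(r_i)=r_i(1-r_i)P_{n-1,a,b}'(r_i)$. Here $r_i<0$ forces $r_i(1-r_i)<0$, while for a polynomial with positive leading coefficient and simple roots $r_1<\dots<r_{n-1}$ one has $\sign P_{n-1,a,b}'(r_i)=(-1)^{n-1-i}$; hence $\sign\pnab(r_i)=(-1)^{n-i}$. Thus $\pnab$ strictly alternates in sign along $r_1,\dots,r_{n-1}$, producing $n-2$ roots, one in each interval $(r_i,r_{i+1})$. Comparing $\sign\pnab(r_{n-1})=-1$ with $\pnab(0)=a^n>0$ yields one further root in $(r_{n-1},0)$, and comparing $\sign\pnab(r_1)=(-1)^{n-1}$ with the sign $(-1)^n$ of $\pnab$ at $-\infty$ yields a root in $(-\infty,r_1)$. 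This accounts for $n$ distinct negative roots of a degree-$n$ polynomial, so they are all of them and each is simple, completing the induction. Finally, a polynomial with positive leading coefficient all of whose roots are negative has strictly positive coefficients, giving $\vab(n,k)>0$ for $0\le k\le n$ (alternatively this follows directly from \eqref{vx-rec} by induction).

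The main obstacle is exactly this sign-counting step in case \ref{tp++}: one must correctly track the sign of $P_{n-1,a,b}'$ at its own roots and verify that the two ``boundary'' roots (one near $0$, one near $-\infty$) are genuinely new and distinct from the $n-2$ interlacing ones, so that the total is precisely $n$. The degenerate behaviour at the endpoints---the appearance of the root $0$ in cases \ref{tp0+} and \ref{tp00}, and the drop in degree in case \ref{tp+0}---is then only bookkeeping via \refL{LP1}, since multiplication by $x$ or by the constant $a$ can neither create nor destroy simplicity away from $0$, and the factor $P_{n-1,1,b}(0)=1\neq0$ (resp.\ $P_{n-2,1,1}(0)=1\neq0$) ensures the new root $0$ is itself simple.
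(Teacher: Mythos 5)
Your proposal is correct and follows essentially the same route as the paper: part (i) is proved by induction via the recursion \eqref{pnab-rec}, evaluating $\pnab$ at the roots of $P_{n-1,a,b}$ and counting sign changes (together with the sign at $0$ and at $-\infty$ coming from \eqref{tp0} and \eqref{tpn}), exactly the Frobenius-style argument the paper uses, and parts (ii)--(iv) are reduced to (i) through \refL{LP1} and the definitions \eqref{tvx00}--\eqref{tpn00}, just as in the paper. The only cosmetic differences are the reversed indexing of the roots and that you derive the positivity of the coefficients from the root structure rather than directly by induction from \eqref{vx-rec}; both are valid.
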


\begin{proof}
  \pfitemref{tp++}
Induction shows that $\vab(n,k)>0$ for $0\le k\le n$, so $\pnab$ has degree
exactly $n$. 
The fact that  all roots are negative and simple
follows from \eqref{pnab-rec},
as noted already by \citet{Frobenius} for the Eulerian polynomials; this can
be seen by the following standard argument.
Suppose, by induction, that $\pniab$ has $n-1$ simple roots $-\infty <
x_{n-1}<\dots<x_1<0$. Then $\pniab$ changes sign at each root, with a
non-zero derivative, and since
$\pniab(0)>0$ by \eqref{tp0}, we have
$\sign(\pniab'(x_i))=(-1)^{i-1}$,  $i=1,\dots,n-1$.
Since \eqref{pnab-rec} yields 
$\pnab(x_i)=x_i(1-x_i)\pniab'(x_i)$ and $x_i<0$, this implies
$\sign(\pnab(x_i))=(-1)^{i}$,  $i=1,\dots,n-1$.
Moreover, $\sign(\pnab(0))=+1$
and 
$\lim_{x\to-\infty}\sign(\pnab(x))=(-1)^n\sign(\vab(n,n))=(-1)^n$ 
by \eqref{tp0} and \eqref{tpn}.
Hence $\pnab$ changes sign at least $n$ times in $(-\infty,0)$, and thus has
at least $n$ roots there. Since $\pnab$ has degree $n$, these are all the
roots, and they are all simple.

\pfitemx{\ref{tp+0},\ \ref{tp0+}}
Follows from \ref{tp++} and \refL{LP1}.
(Alternatively, the proof above works with minor modifications.)

\pfitemx{\ref{tp00}}
Follows from \ref{tp++} and the definitions \eqref{tvx00}--\eqref{tpn00}.
\end{proof}

The proof shows also that the roots of $\pniab$ and $\pnab$ are interlaced
(except that 0 is a common root when $a=0$).
For more general results of this kind, see \eg{} \cite{WangYeh} and
\cite[Proposition 3.5]{LiuWang}.

\begin{example}
The case  $a=b=1/2$ appeared in \cite{d-hh}, see \refE{E22}.
In this case, it is more convenient to study the numbers 
$B(n,k)\=2^n\vx_{1/2,1/2}(n,k)$ which are integers
and satisfy the recursion
\begin{equation}
B(n,k)=(2k+1)B(n-1,k)+(2n-2k+1)B(n-1,k-1),
\qquad n\ge1;
\end{equation}
these are called \emph{Eulerian numbers of type B}
\cite[A060187]{OEIS}. 
The numbers $B_{n,k}$ 
seem to have been introduced by \citet[p.~331]{MacMahon} 
in number theory.
They also have combinatorial interpretations, for example as
the numbers of descents in signed permutations, \ie, in the
hyperoctahedral group
\cite{Brenti,ChowG,SchmidtS}.

Note that this case is a special case of both of the following examples.
\end{example}

\begin{example}
Franssens  \cite{pyr} studied numbers and polynomials equivalent to
the case $a=b$ of ours; more precisely, 
his $B_{n,k}(c)=2^nv_{c/2,c/2}(n,k)$, as is seen by comparing his recursion
formula to \eqref{vx-rec}, and thus his 
$B_n(x,y;c)= 2^n \hP_{n,c/2,c/2}(x,y)$,
using the notation \eqref{hP}.
The generating function in
\cite[Proposition 3.1]{pyr} thus yields (for small $|t|$)
\begin{equation}
\sumn \hP_{n,a,a}(x,y) \frac{t^n}{n!} =B(x,y,t)^{2a},
\end{equation}
with
\begin{equation}
  B(x,y,t)\=
  \begin{cases}
\frac{x-y}{xe^{-(x-y)t/2}-ye^{(x-y)t/2}}, & x\neq y	;
\\
\frac1{1-xt}, & x= y.	
  \end{cases}
\end{equation}
It would be interesting to find a similar generating function for 
$\hP_{n,a,b}(x)$ for arbitrary $a$ and $b$.
\end{example}

\begin{example}
  The case $a+b=1$ yields polynomials $P_{n,a,1-a}(x)$ generalizing the
  Eulerian polynomials (the case $a=1$, or $a=0$); they 
satisfy the following extensions of \eqref{ep}--\eqref{gep}:
\begin{equation}
  \sumk (k+a)^n x^k = \frac{P_{n,a,1-a}(x)}{(1-x)^{n+1}}
\end{equation}
and 
\begin{equation} 
\sumn P_{n,a,1-a}(x)\frac{z^n}{n!}
=
\frac{(1-x)e^{az(1-x)}}{1-xe^{z(1-x)}}.
\end{equation}
These polynomials are sometimes called (generalized)
\emph{Euler--Frobenius polynomials} and appear \eg{} in spline theory, see
\eg{} \cite{MeinardusMerz,terMorsche,Reimer:extremal,Reimer:main,Siepmann}. 
The function $P_{n,1-a,a}(x)/(x-1)^n$ was studied by \citet{Carlitz}
(there denoted $H_n(a\mid x)$).
\end{example}

We defined in \eqref{tvx00}--\eqref{tpn00} 
$\tvx_{0,0}(n,k)$ and $\tP_{n,0,0}(x)$ as substitutes for the vanishing 
$\vx_{0,0}(n,k)$ and $P_{n,0,0}(x)$. 
To justify this, we first note that these numbers and polynomials satisfy
the recursions obtained by putting $a=b=0$ in 
\eqref{vx-rec} and \eqref{pnab-rec}.
\begin{lemma}\label{L00rec}
We have
\begin{equation}\label{tvx-rec}
\tvxoo(n,k)=k\tvxoo(n-1,k)+(n-k)\tvxoo(n-1,k-1),
\qquad n\ge3,
\end{equation}
with $\tvxoo(2,1)=1$ and $\tvxoo(2,k)=0$ for $k\neq1$.
Similarly,
\begin{equation}\label{tpnab-rec}
  \tpnoo(x)
= (n-1)x  \tP_{n-1,0,0}(x)
+x(1-x)   \tP_{n-1,0,0}'(x),
\qquad n\ge3.
\end{equation}
  with $\tP_{2,0,0}(x)=x$.
\end{lemma}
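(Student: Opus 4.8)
The plan is to deduce both identities directly from the defining relations \eqref{tvx00}--\eqref{tpn00}, together with the recursions \eqref{vx-rec} and \eqref{pnab-rec} specialized to $a=b=1$; no new idea is required beyond careful index shifting. The point is that $\tvxoo$ and $\tpnoo$ are just shifted copies of $v_{1,1}$ and $P_{n-2,1,1}$, so the $a=b=0$ recursions should be inherited from the (nontrivial) $a=b=1$ ones.

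For the numbers, I would start from $\tvxoo(n,k)=v_{1,1}(n-2,k-1)$ and apply \eqref{vx-rec} with $a=b=1$ at level $n-2$. Since \eqref{vx-rec} is valid for $n-2\ge1$, this computation requires $n\ge3$ and gives
\[
v_{1,1}(n-2,k-1)=k\,v_{1,1}(n-3,k-1)+(n-k)\,v_{1,1}(n-3,k-2),
\]
after simplifying the coefficients $(k-1)+1=k$ and $(n-2)-(k-1)+1=n-k$. Re-expressing the two terms on the right via the definition, namely $v_{1,1}(n-3,k-1)=\tvxoo(n-1,k)$ and $v_{1,1}(n-3,k-2)=\tvxoo(n-1,k-1)$, yields exactly \eqref{tvx-rec}. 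The base case follows by evaluating $\tvxoo(2,k)=v_{1,1}(0,k-1)$, which equals $1$ when $k=1$ and $0$ otherwise, since $v_{1,1}(0,0)=1$ and $v_{1,1}(0,j)=0$ for $j\neq0$.

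For the polynomials, the cleanest route is to multiply the number recursion \eqref{tvx-rec} by $x^k$ and sum over $k$. The first term contributes $x\,\tP_{n-1,0,0}'(x)$. For the second I would shift the summation index by one and split $n-k=(n-1)-j$, turning $\sum_k (n-k)\tvxoo(n-1,k-1)x^k$ into $(n-1)x\,\tP_{n-1,0,0}(x)-x^2\tP_{n-1,0,0}'(x)$. Adding the two pieces and collecting the derivative terms gives $(n-1)x\,\tP_{n-1,0,0}(x)+x(1-x)\tP_{n-1,0,0}'(x)$, which is \eqref{tpnab-rec}, valid for $n\ge3$ as before. (Alternatively one can substitute $\tP_{n,0,0}(x)=xP_{n-2,1,1}(x)$ into \eqref{pnab-rec} at index $n-2$ and simplify, but this forces differentiating the quotient $\tP_{n-1,0,0}(x)/x$, so the coefficient computation is marginally tidier.) The base case $\tP_{2,0,0}(x)=xP_{0,1,1}(x)=x$ follows from $P_{0,1,1}(x)=1$.

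Since every step is an elementary substitution, there is no genuine obstacle here; the only point requiring care is the double index shift (by $2$ in $n$ and by $1$ in $k$). In particular, one must check that the hypothesis $n\ge3$ matches the validity range $n-2\ge1$ of the underlying recursions, and that the reindexing $j=k-1$ in the polynomial sum, together with the split $n-k=(n-1)-j$, is carried out consistently so that the coefficients of $\tP_{n-1,0,0}(x)$ and $\tP_{n-1,0,0}'(x)$ come out correctly.
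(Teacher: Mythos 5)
Your proof is correct and follows essentially the same route as the paper, whose entire proof is the one-line remark that the lemma ``follows easily by substituting the definitions \eqref{tvx00} and \eqref{tpn00} in \eqref{vx-rec} and \eqref{pnab-rec}''; your verification of the number recursion is exactly this substitution (with $a=b=1$, shifted by $2$ in $n$ and $1$ in $k$), and your derivation of the polynomial recursion by multiplying \eqref{tvx-rec} by $x^k$ and summing is just the standard equivalence between \eqref{vx-rec} and \eqref{pnab-rec} that the paper itself invokes. The alternative you mention in passing (substituting $\tP_{n,0,0}(x)=xP_{n-2,1,1}(x)$ directly into \eqref{pnab-rec}) is the paper's literal suggestion, and both computations check out.
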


\begin{proof}
  Follows easily by substituting the definitions 
\eqref{tvx00} and \eqref{tpn00} in 
\eqref{vx-rec} and \eqref{pnab-rec}.
\end{proof}

Moreover, these numbers and polynomials appear as limits as $a,b\to0$ if we
renormalize: 
\begin{lemma}\label{L00lim}
  For any $n\ge 2$ and $k\in\bbZ$ or $x\in\bbR$,
as $a,b\downto 0$,	
  \begin{align}\label{v00lim}
\frac{\vx\ab(n,k)}{a+b}&\to\tvxoo(n,k),
\\
\frac{P\nab(x)}{a+b}&\to\tpnoo(x). \label{p00lim}
  \end{align}
\end{lemma}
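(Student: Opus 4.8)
The plan is to establish the coefficient statement \eqref{v00lim} by induction on $n$, and then to deduce \eqref{p00lim} immediately: since $P\nab(x)=\sum_k\vx\ab(n,k)x^k$ is a finite linear combination of the numbers $\vx\ab(n,k)$, dividing by $a+b$ and taking the term-by-term limit gives $\frac{P\nab(x)}{a+b}=\sum_k\frac{\vx\ab(n,k)}{a+b}x^k\to\sum_k\tvxoo(n,k)x^k=\tpnoo(x)$. So the heart of the matter is the coefficient-wise limit \eqref{v00lim}; note also that for $k<0$ or $k>n$ both sides vanish, so only $0\le k\le n$ needs attention.

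For the base case $n=2$ I would read off the entries from \refTab{Tab:vab}. The boundary entries give $\frac{\vx\ab(2,0)}{a+b}=\frac{a^2}{a+b}\to0$ and $\frac{\vx\ab(2,2)}{a+b}=\frac{b^2}{a+b}\to0$, matching $\tvxoo(2,0)=\tvxoo(2,2)=0$, while the middle entry gives $\frac{\vx\ab(2,1)}{a+b}=\frac{a+b+2ab}{a+b}=1+\frac{2ab}{a+b}\to1=\tvxoo(2,1)$. For the inductive step ($n\ge3$) I would divide the defining recursion \eqref{vx-rec} by $a+b$ to obtain
\begin{equation*}
\frac{\vx\ab(n,k)}{a+b}
=(k+a)\,\frac{\vx\ab(n-1,k)}{a+b}
+(n-k+b)\,\frac{\vx\ab(n-1,k-1)}{a+b}.
\end{equation*}
By the induction hypothesis (applicable since $n-1\ge2$), the two quotients on the right converge, as $a,b\downto0$, to $\tvxoo(n-1,k)$ and $\tvxoo(n-1,k-1)$ respectively, while the prefactors satisfy $k+a\to k$ and $n-k+b\to n-k$. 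Hence the left-hand side converges to $k\,\tvxoo(n-1,k)+(n-k)\,\tvxoo(n-1,k-1)$, which by the recursion \eqref{tvx-rec} of \refL{L00rec} is exactly $\tvxoo(n,k)$, completing the induction.

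The one point that deserves care, and which I expect to be the only real obstacle, is the \emph{existence} of the limit: since $\vx\ab(n,k)$ is a polynomial in $(a,b)$ vanishing at the origin, the quotient $\vx\ab(n,k)/(a+b)$ could a priori be path-dependent as $(a,b)\to(0,0)$. The induction resolves this automatically: once the $(n-1)$-level quotients are known to converge path-independently to finite limits, multiplying them by the factors $k+a$ and $n-k+b$, which tend to finite constants, preserves convergence, so the level-$n$ limit exists and is path-independent as well. As a consistency check one can verify the normalization via \eqref{tp1}: $\frac{P\nab(1)}{a+b}=\frac{(a+b)\rise n}{a+b}=\prod_{i=1}^{n-1}(a+b+i)\to(n-1)!=\tpnoo(1)$, in agreement with evaluating \eqref{p00lim} at $x=1$.
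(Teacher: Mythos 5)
Your proposal is correct and follows essentially the same route as the paper: verify the case $n=2$ by inspection of \refTab{Tab:vab}, then for $n\ge3$ divide the recursion \eqref{vx-rec} by $a+b$ and close the induction using \eqref{tvx-rec} from \refL{L00rec}, with \eqref{p00lim} following coefficient-wise from \eqref{v00lim}. The extra remarks on path-independence of the limit and the normalization check at $x=1$ are sound but not needed beyond what the induction already provides.
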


\begin{proof}
  We first verify \eqref{v00lim} for $n=2$ by inspection, see \refTab{Tab:vab}.
For $n>2$ we divide \eqref{vx-rec} by $a+b$, let $a,b\downto0$ and use
induction together with \eqref{tvx-rec}.

Finally, \eqref{p00lim} follows from \eqref{v00lim} by \eqref{tpn00} and
\eqref{pnab}. 
\end{proof}

\begin{remark}
More general numbers, defined by a more general version of the recursion
formula \eqref{vx-rec}, are studied in \cite{WangYeh}.  
\end{remark}

\section{Proofs of Theorems \ref{T1}--\ref{TN2}}\label{Spf}

To prove \refT{T1} we use induction on the size $n$, 
where we extend a staircase tableau of size $n-1$ by adding a
column of length $n$ to the left
and consider all possible ways of filling it out with the symbols.
This method was used, in a probabilistic context,
in \cite{d-hh} and its origins seem to go back to
\cite[Remark~3.14]{cssw}, see also \cite{cd-h}. For permutation
tableaux an analogous technique was used in \cite{ch} and \cite{hj}. 

In order to do the necessary recursive analysis, we
introduce a suitable generating function with an additional ``catalytic''
parameter that we now define. 

We say that a row of a staircase tableau is {\it indexed by $\a$} if
its leftmost entry is $\a$. 
Thus, for example,  
in the tableau depicted in \refF{fig:tab}, the first, third and
eighth rows
are indexed by $\alpha$. 
The number of rows indexed by $\a$ in a staircase tableau $S$ will be
denoted by $r=r(S)$. 

We introduce the generating function for the pair of parameters $(A,r)$:
\begin{equation}\label{dn}
  D_n(x,z)\=\sum_{S\in\css_n} \wt(S)x^{A(S)}z^{r(S)}
=\sum_{S\in\css_n} \ga^{\na}\gb^{\nb}x^{A}z^{r} .
\end{equation}
We regard $\ga$ and $\gb$ as fixed in this section, and 
for simplicity we omit them from the notation $D_n(x,z)$.
We assume that $0<\ga,\gb<\infty$.

\begin{remark}\label{Rr} 
In an \gabst, a row containing a $\gb$ must by \ref{stbd} have the $\gb$ as its
leftmost entry; hence it is not indexed by $\ga$. Conversely, a row without
$\gb$ is necessarily indexed by $\ga$.
Since no row contains more than one $\gb$,
it follows that $r=n-\nb$ \cite{d-hh}. 
We thus have $D_n(x,z)=z^n \tD_n(x,\ga,\gb/z)$ where
\begin{equation}
\tD_n(x,\ga,\gb)\=\sum_{S\in\css_n} \ga^{\na}\gb^{\nb}x^{A}=D_n(x,1).  
\end{equation}
Hence it is possible to avoid $r$ and instead argue with the simpler
$\tD_n(x,\ga,\gb)$ and a varying $\gb$. However, we find it more convenient
to keep $\ga$ and $\gb$ fixed and to use $r$ in the argument below.
\end{remark}

Trivially, $D_0(x,z)=1$ (see \refR{R0}). 

\begin{lemma}\label{L2}
$D_n$ satisfies the recursion, for $n\ge1$,
\begin{equation}\lbl{rec_gf}
D_{n}(x,z)=\a z(x-1)D_{n-1}(x,z)+(\a z+\b)D_{n-1}(x,z+\b).
  \end{equation}
\end{lemma}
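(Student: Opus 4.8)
The plan is to prove the recursion \eqref{rec_gf} by the column-addition method: given an \gabst{} $S'$ of size $n-1$, I build all \gabst{x} $S$ of size $n$ that reduce to $S'$ when the leftmost column (of length $n$) is deleted, and track the effect on the weight and on the statistics $A$ and $r$. First I would set up the bookkeeping. The new leftmost column has $n$ boxes; by \ref{stag} it can contain at most one symbol from $\{\ga\}$ (in the \gabst{} setting only $\ga$ and $\gb$ occur), and if it contains an $\ga$, then all boxes above that $\ga$ must be empty. The top box of the new column lies on the diagonal, so by \ref{stdiag} the top box must be nonempty; hence the diagonal box is filled with either $\ga$ or $\gb$. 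I would organize the count of fillings according to which diagonal symbol is placed and where (if anywhere) an $\ga$ sits below the diagonal, and I would read off from \ref{stbd}--\ref{stag} exactly which of the old column entries are forced empty.

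The key computational step is to record, for each legal filling of the new column, the multiplicative change in $\wt(S)$, in $x^{A(S)}$, and in $z^{r(S)}$ relative to $S'$. Placing an $\ga$ on the diagonal contributes a factor $\ga x$ (one more $\ga$, one more diagonal $\ga$); placing a $\gb$ on the diagonal contributes $\gb$ and increases $\nb$, hence \emph{decreases} $r$ by one relative to the naive count, which is the origin of the shift $z\mapsto z+\b$ and the evaluation $D_{n-1}(x,z+\b)$. The crucial observation, already implicit in Remark~\ref{Rr} via $r=n-\nb$, is that adding the column changes $r$ in a way that depends on whether the diagonal symbol is $\ga$ or $\gb$; I would make this precise by checking that summing over the placement of an $\ga$ below the diagonal in a row indexed by $\ga$ reproduces, after collecting the geometric-type sum over admissible positions, exactly the weight $\a z$ attached to each such row. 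This is the step where the catalytic variable $z$ earns its keep: the number of available rows for a below-diagonal $\ga$ is precisely $r(S')$, so the sum over these positions turns a factor into the operator behavior encoded by $D_{n-1}(x,z)$ and $D_{n-1}(x,z+\b)$.

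The main obstacle I anticipate is verifying that the two terms on the right of \eqref{rec_gf} correctly partition all legal fillings without double-counting, and in particular that the term $\a z(x-1)D_{n-1}(x,z)$ arises correctly. I expect this term to come from comparing the case where the new diagonal box holds $\ga$ against a reference count: writing the total contribution of the ``new diagonal $\ga$'' fillings and of the ``new diagonal $\gb$'' fillings and then simplifying should produce $\a z x\cdot D_{n-1}(x,z)$ from one grouping and $(\a z+\b)D_{n-1}(x,z+\b)$ minus a correction from another, and the algebra must be arranged so the leftover is exactly $-\a z\, D_{n-1}(x,z)$, combining to give the factor $(x-1)$. Care is needed because a below-diagonal $\ga$ empties the boxes above it, so the set of rows it can occupy and its interaction with $r$ must be counted against the right power of $z$.

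Concretely, I would carry out the argument in three short steps: (S\,i) fix $S'\in\css_{n-1}$ and enumerate the admissible contents of the new column using \ref{st0}--\ref{stag}, splitting on the diagonal symbol; (S\,ii) for each case compute the factor by which $\wt(S)x^{A(S)}z^{r(S)}$ differs from $\wt(S')x^{A(S')}z^{r(S')}$, using that a below-diagonal $\ga$ may go in any of the $r(S')$ rows indexed by $\ga$ and that a diagonal $\gb$ shifts $r$; (S\,iii) sum over all $S'$ and all admissible column-fillings, recognizing the two resulting sums as $D_{n-1}(x,z)$ and $D_{n-1}(x,z+\b)$ with the asserted coefficients. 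The base case $D_0(x,z)=1$ is immediate from Remark~\ref{R0}, so once the generating-function identity is assembled the recursion \eqref{rec_gf} follows for all $n\ge1$.
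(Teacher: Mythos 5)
You are following the same route as the paper's proof: extend each $S'\in\css_{n-1}$ by a new leftmost column of length $n$, compute the factor by which $\wt(S)x^{A(S)}z^{r(S)}$ changes for each legal filling, and sum. The final algebra you anticipate (one grouping yielding $\a zx\,D_{n-1}(x,z)$, the other yielding $(\a z+\b)D_{n-1}(x,z+\b)$ minus a leftover $\a z\,D_{n-1}(x,z)$) is exactly how the paper's cases recombine into \eqref{rec_gf}. However, your set-up has the geometry backwards, and this is not cosmetic. In the shape $(n,n-1,\dots,1)$ the diagonal box of the new leftmost column is its \emph{bottom} box; the other $n-1$ new boxes extend the old rows and sit \emph{above} it. By \ref{stag}, a diagonal $\ga$ therefore forces the entire rest of the column to be empty, while a diagonal $\gb$ permits further symbols above it, any added $\ga$ being necessarily the topmost added symbol. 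In your picture (diagonal box on top, an extra $\ga$ ``below the diagonal''), \ref{stag} would forbid any $\ga$ underneath the filled diagonal box, so the very case that is supposed to produce $\a z\bigl(D_{n-1}(x,z+\b)-D_{n-1}(x,z)\bigr)$ could never occur.

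The more serious gap is that your enumeration of legal fillings is incomplete: it never allows additional $\gb$'s in the new column. When the diagonal box gets $\gb$, any subset of the $r=r(S')$ rows indexed by $\ga$ may receive a new symbol (these are the only rows permitted by \ref{stbd}, since a new leftmost entry in a row already containing $\gb$ would sit to the left of that $\gb$), and all of these added symbols must be $\gb$ except possibly the topmost one, which may be $\ga$. These optional $\gb$'s are what generate the shift of the catalytic variable: each row indexed by $\ga$ either keeps its status (factor $z$) or receives a $\gb$ and loses it (factor $\b$), so the sum $\sum_{k}\binom rk \b^k z^{r-k}=(z+\b)^r$ converts $z^{r}$ into $(z+\b)^{r}$, i.e.\ evaluates $D_{n-1}$ at $z+\b$. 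Your attribution of this shift to the diagonal $\gb$ ``decreasing $r$ by one'' is incorrect: a diagonal $\gb$ leaves $r$ unchanged, and it is the diagonal $\ga$ that increases $r$ by one — whence the factor $z$ in $\a xz$, which your first accounting also drops. Without the extra-$\gb$ fillings there is no mechanism in your argument that produces $(z+\b)^r$, so the recursion \eqref{rec_gf} cannot be reached. Once you correct the orientation and include these fillings — the paper's cases of $k$ added $\gb$'s, or $k-1$ added $\gb$'s plus a topmost $\ga$, with $\binom rk$ choices each — the computation you outline does close up.
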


\begin{proof}
Fix an \gabst{} $S$ of size $n-1$ with parameters $\na$, $\nb$, $A$, $r$, 
and consider all ways to extend it to  a tableau of size $n$ by adding a
column of length $n$ on the left and filling some boxes in it.
There are three cases,
\cf{} \cite{cd-h, d-hh}.
\begin{romenumerate}[-20pt]
\item 
We put $\a$ in the bottom box of the added column.
By \ref{stag}, no other boxes in the new column can be filled, so this gives 
a single staircase tableau of size $n$; this tableau has parameters $\na+1$,
$\nb$, $A+1$ and $r+1$, so its contribution to $D_n(x,z)$ is
\begin{equation}\label{ika1}
  \ga^{\na+1}\gb^{\nb}x^{A+1}z^{r+1} 
= \ga xz\,\ga^{\na}\gb^{\nb}x^{A}z^{r}.
\end{equation}
\item
We put $\gb$ in the bottom box of the added column; we may also put 
$\ga$ or $\gb$ in some other boxes in the new column, and we consider first
the case when we put no $\ga$, so only $\gb$'s are added.
By \ref{stbd}, we may put a $\gb$ only in the rows indexed by $\ga$ (apart
from the bottom box). For  $0\le k\le r$,
there are thus $\binom rk$ possibilities to add $k$ further $\gb$; each
choice yields a staircase tableau with parameters $\na$, $\nb+1+k$, $A$, $r-k$,
and their total contribution to $D_n(x,z)$ is 
\begin{equation}\label{ika2}
  \sum_{k=0}^r \binom rk \ga^{\na}\gb^{\nb+1+k}x^A z^{r-k}
= \ga^{\na}\gb^{\nb+1}x^A (z+\gb)^{r}.
\end{equation}

\item
We put $\gb$ in the bottom box of the added column and
$\ga$ or $\gb$ in some other boxes in the new column, including an
$\ga$. 
By \ref{stag}, we may add only one $\ga$, and it has to be the top one of
the added symbols. Again, the new symbols may (apart from the bottom box)
only be added in rows indexed by $\ga$. 
For  $1\le k\le r$,
there are thus $\binom rk$ possibilities to add $k-1$ further $\gb$ and one
$\ga$; each 
choice yields a staircase tableau with parameters $\na+1$, $\nb+k$, $A$,
$r-k+1$, 
and their total contribution to $D_n(x,z)$ is 
\begin{equation}\label{ika3}
  \sum_{k=1}^r \binom rk \ga^{\na+1}\gb^{\nb+k}x^A z^{r-k+1}
= \ga^{\na+1}\gb^{\nb}x^A z \bigpar{(z+\gb)^{r} - z^r}.
\end{equation}
 \end{romenumerate}

Combining \eqref{ika1}--\eqref{ika3}, we obtain the total contribution
from extensions of $S$ to be
\begin{equation}\label{ikam}
\ga xz\, \ga^{\na}\gb^{\nb}x^A z^r
+
 (\gb+\ga z)\ga^{\na}\gb^{\nb}x^A (z+\gb)^{r}
-\ga z\, \ga^{\na}\gb^{\nb}x^A z^r,
\end{equation}
and summing over all $S\in \css_{n-1}$ yields \eqref{rec_gf}.
\end{proof}

Iterating \eqref{rec_gf} we obtain the following, recalling that
$x\rise\ell$ denotes the rising factorial and that $a=\ga\qw$ and
$b=\gb\qw$.

\begin{lemma}\label{L3}
Assume $0<\ga,\gb<\infty$.  
  For $0\le m\le n$,
  \begin{equation}\label{l3}
D_n(x,z) = (\ga\gb)^m \sum_{\ell=0}^m
c_{m,\ell}(z) (a+bz)\rise\ell (x-1)^{m-\ell} D_{n-m}(x,z+\ell\gb),
  \end{equation}
where $c_{0,0}(z)=1$ and, for $m\ge0$, with $c_{m,-1}(z)=c_{m,m+1}(z)=0$,
\begin{equation}
c_{m+1,\ell}(z)=(\ell+bz)c_{m,\ell}(z)+c_{m,\ell-1}(z),
\quad 0\le\ell\le m+1.  
\end{equation}
\end{lemma}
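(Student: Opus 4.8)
The plan is to prove \eqref{l3} by induction on $m$ for fixed $n$, iterating the single-step recursion \eqref{rec_gf} of \refL{L2}. The base case $m=0$ is immediate: the empty rising factorial gives $(a+bz)\rise0=1$, the sum collapses to its single term with $c_{0,0}(z)=1$ and $(x-1)^0=1$, so the right-hand side is just $D_n(x,z)$.

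For the inductive step I would assume \eqref{l3} for some $m<n$ and apply \eqref{rec_gf} (with size $n-m$ and catalytic variable $z+\ell\gb$ in place of $z$) to each factor $D_{n-m}(x,z+\ell\gb)$ occurring in the sum:
\[
D_{n-m}(x,z+\ell\gb) = \ga(z+\ell\gb)(x-1)D_{n-m-1}(x,z+\ell\gb) + \bigpar{\ga(z+\ell\gb)+\gb}D_{n-m-1}(x,z+(\ell+1)\gb).
\]
The crux is the pair of algebraic identities that collapse the $\ga,\gb$-bookkeeping onto the parameters $a=\ga\qw$, $b=\gb\qw$: since $\ga a=1$ and $\gb b=1$,
\[
\ga(z+\ell\gb) = \ga\gb(bz+\ell), \qquad \ga(z+\ell\gb)+\gb = \ga\gb(a+bz+\ell).
\]
These extract one more factor $\ga\gb$ from every term, promoting $(\ga\gb)^m$ to $(\ga\gb)^{m+1}$, and rewrite the two coefficients as the affine functions $bz+\ell$ and $a+bz+\ell$ of the reindexed catalytic variable.

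I would then split the resulting double sum into two pieces. In the piece coming from the shifted argument $z+(\ell+1)\gb$, the factor $(a+bz+\ell)$ absorbs into the rising factorial, $(a+bz)\rise\ell(a+bz+\ell)=(a+bz)\rise{\ell+1}$, and after the reindexing $\ell\mapsto\ell-1$ this contributes $c_{m,\ell-1}(z)$ to the coefficient of $(a+bz)\rise\ell(x-1)^{(m+1)-\ell}D_{n-m-1}(x,z+\ell\gb)$; the exponent of $(x-1)$ stays $m-\ell=(m+1)-(\ell+1)$, which is the correct value after relabelling. In the other piece the factor $(bz+\ell)$ multiplies $c_{m,\ell}(z)$ and the extra $(x-1)$ raises the exponent to $(m+1)-\ell$, contributing $(\ell+bz)c_{m,\ell}(z)$. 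Summing the two contributions with the boundary conventions $c_{m,-1}=c_{m,m+1}=0$ yields precisely $c_{m+1,\ell}(z)=(\ell+bz)c_{m,\ell}(z)+c_{m,\ell-1}(z)$, so the right-hand side for $m$ equals that for $m+1$, completing the induction.

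The main obstacle — indeed essentially the only nonroutine point — is spotting those two identities, i.e.\ recognizing that both $\ga(z+\ell\gb)$ and $\ga(z+\ell\gb)+\gb$ become $\ga\gb$ times an affine function of $\ell$ in the variables $a,bz$. Once they are in hand, the rising-factorial absorption, the reindexing of the shifted sum, and the merging of coefficients are all mechanical, and the defining recursion for $c_{m,\ell}(z)$ drops out automatically.
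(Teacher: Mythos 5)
Your proof is correct and follows essentially the same route as the paper's: induction on $m$ starting from the trivial base case, applying the one-step recursion of \refL{L2} to each factor $D_{n-m}(x,z+\ell\gb)$, rewriting $\ga(z+\ell\gb)$ and $\ga(z+\ell\gb)+\gb$ as $\ga\gb(bz+\ell)$ and $\ga\gb(a+bz+\ell)$, absorbing the latter into the rising factorial, reindexing the shifted sum, and reading off the recursion for $c_{m+1,\ell}(z)$. The algebraic details (exponent bookkeeping for $(x-1)$, the reindexing $\ell\mapsto\ell+1$, and the boundary conventions) all check out.
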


\begin{proof}
  The case $m=0$ is trivial.
Suppose that \eqref{l3} holds for some $m\ge0$ and all $n\ge m$.
If $n>m$, we use \refL{L2} on the \rhs{} of \eqref{l3} and obtain
\begin{equation*}
  \begin{split}
&(\ga\gb)^{-m}D_n(x,z) 
\\&
=
\sum_{\ell=0}^m 
c_{m,\ell}(z) (a+bz)\rise\ell (x-1)^{m-\ell} 
\Bigl(\a (z+\ell\gb)(x-1)
D_{n-m-1}(x,z+\ell\gb)
\\&\hskip13em +(\a z+\ga\ell\gb+\b)D_{n-m-1}(x,z+\ell\b+\b)\Bigr)
\\&
=
\ga\gb
\sum_{\ell=0}^m c_{m,\ell}(z) (a+bz)\rise\ell (x-1)^{m+1-\ell} 
(bz+\ell)D_{n-m-1}(x,z+\ell\gb)
\\&
\quad
+
\ga\gb
\sum_{\ell=0}^m c_{m,\ell}(z) (a+bz)\rise\ell (x-1)^{m-\ell} 
(bz+\ell+a)D_{n-m-1}(x,z+\ell\b+\b)
\\&
=
\ga\gb
\sum_{\ell=0}^m (\ell+bz)c_{m,\ell}(z) (a+bz)\rise\ell (x-1)^{m+1-\ell} 
D_{n-m-1}(x,z+\ell\gb)
\\&
\quad
+
\ga\gb
\sum_{j=1}^{m+1} c_{m,j-1}(z) (a+bz)\rise j (x-1)^{m+1-j} 
D_{n-m-1}(x,z+j\b).
  \end{split}
\end{equation*}
The result for $m+1$ follows, and the lemma follows by induction.
\end{proof}

We now take $z=1$, thus forgetting $r$. 
(We will not use $r$ further. If desired, $r$ can be recovered  by \refR{Rr}.)
This yields the following formula for the generating function $D_n(x,1)$ for
$A$. 
We write $c_{n,\ell}=c_{n,\ell}(1)$.

\begin{lemma}\label{L4}
Assume $0<\ga,\gb<\infty$.  
  For $n\ge0$,
  \begin{equation}\label{l4}
D_n(x,1) = (\ga\gb)^n \sum_{\ell=0}^n
c_{n,\ell} (a+b)\rise\ell (x-1)^{n-\ell} 
  \end{equation}
where $c_{0,0}=1$ and, for $n\ge0$, with $c_{n,-1}=c_{n,n+1}=0$,
\begin{equation}\label{l4c}
c_{n+1,\ell}=(\ell+b)c_{n,\ell}+c_{n,\ell-1},
\quad 0\le\ell\le n+1.  
\end{equation}
\end{lemma}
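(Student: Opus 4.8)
The plan is to obtain \refL{L4} as the special case $m=n$, $z=1$ of the already-established \refL{L3}. It is worth noting first why we route through \refL{L3} rather than simply iterating the recursion \eqref{rec_gf} at $z=1$: setting $z=1$ in \eqref{rec_gf} gives $D_n(x,1)=\ga(x-1)D_{n-1}(x,1)+(\ga+\gb)D_{n-1}(x,1+\gb)$, which couples $D_n(x,1)$ to $D_{n-1}(x,1+\gb)$ rather than to $D_{n-1}(x,1)$. The recursion therefore does not close in the single slice $z=1$, and this is precisely the role of the catalytic variable $z$ in \refL{L3}: by carrying the full $z$-dependence we can iterate all the way down to $D_0$ before evaluating.

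Concretely, I would take $m=n$ in \eqref{l3}. Then the tableau factor appearing in the sum is $D_{n-m}(x,z+\ell\gb)=D_0(x,z+\ell\gb)$, and since $D_0(x,z)=1$ for every $z$ (the trivial base case recorded just before \refL{L2}), this factor equals $1$ regardless of $z$ and $\ell$ and simply drops out. Next I would set $z=1$: the coefficient $(a+bz)\rise\ell$ becomes $(a+b)\rise\ell$, and writing $c_{n,\ell}\=c_{n,\ell}(1)$ as in the statement turns the sum in \eqref{l3} into exactly $(\ga\gb)^n\sum_{\ell=0}^n c_{n,\ell}(a+b)\rise\ell(x-1)^{n-\ell}$, which is \eqref{l4}.

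It remains to record the recursion \eqref{l4c}, and this is again just evaluation at $z=1$ of the recursion for $c_{m,\ell}(z)$ in \refL{L3}: the factor $(\ell+bz)$ specializes to $(\ell+b)$, giving $c_{n+1,\ell}=(\ell+b)c_{n,\ell}+c_{n,\ell-1}$, while the boundary conventions $c_{m,-1}(z)=c_{m,m+1}(z)=0$ and the initial value $c_{0,0}(z)=1$ specialize to $c_{n,-1}=c_{n,n+1}=0$ and $c_{0,0}=1$.

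Since every step is a direct substitution into an identity already proved in \refL{L3}, I do not expect any genuine obstacle. The only points that need a word of justification are that the residual factor $D_0$ is genuinely independent of its arguments (immediate from its definition) and that the substitution $z=1$ is legitimate in both the closed form and the coefficient recursion, which it is because \eqref{l3} and the recursion for $c_{m,\ell}(z)$ hold as polynomial identities in $z$.
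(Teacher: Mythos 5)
Your proof is correct and is essentially the paper's own proof: specialize \refL{L3} to $m=n$ and $z=1$, use $D_0=1$ to drop the tableau factor, and read off \eqref{l4} and \eqref{l4c} from the formula and coefficient recursion of \refL{L3} evaluated at $z=1$. Your additional remarks (why the recursion \eqref{rec_gf} does not close at $z=1$, and that the substitution is valid as a polynomial identity in $z$) are sound but not needed beyond what the paper states.
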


\begin{proof}
  Take $z=1$ and $m=n$ in \refL{L3}, recalling that $D_0=1$ so the factor
  $D_{n-m}(x,z+\ell\gb)$ on the \rhs{} of \eqref{l3} disappears.
\end{proof}

We have found a formula for $D_n(x,1)$ as a polynomial in $x-1$.
We can 
identify it as $\pnab(x)$
(up to a constant factor).

\begin{lemma}\label{L5}
Assume $0<\ga,\gb<\infty$.  
  For $n\ge0$,
  \begin{equation}
D_n(x,1) = (\ga\gb)^n \pnab(x).
  \end{equation}
\end{lemma}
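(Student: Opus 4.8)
The plan is to prove this by induction on $n$, identifying the polynomial
\[
Q_n(x)\=\sum_{\ell=0}^n c_{n,\ell}(a+b)\rise\ell (x-1)^{n-\ell}
\]
with $\pnab(x)$; by \refL{L4} this is exactly what is needed, since $D_n(x,1)=(\ga\gb)^nQ_n(x)$. Both $Q_n$ and $\pnab$ are determined by a recursion in $n$ together with the same initial value: $Q_0(x)=1=P_{0,a,b}(x)$, using $c_{0,0}=1$ and the empty rising factorial $(a+b)\rise0=1$. It therefore suffices to show that $Q_n$ satisfies the defining recursion \eqref{pnab-rec} of $\pnab$, namely $Q_n(x)=\bigpar{(n-1+b)x+a}Q_{n-1}(x)+x(1-x)Q_{n-1}'(x)$ for $n\ge1$.

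First I would substitute the recursion \eqref{l4c} for $c_{n,\ell}$ (in the form $c_{n,\ell}=(\ell+b)c_{n-1,\ell}+c_{n-1,\ell-1}$) into the sum defining $Q_n$, splitting it into two sums. Writing $t_\ell\=c_{n-1,\ell}(a+b)\rise\ell(x-1)^{n-1-\ell}$ for the generic term of $Q_{n-1}$, the contribution of the $(\ell+b)c_{n-1,\ell}$ part is $(x-1)\sum_\ell(\ell+b)t_\ell$. For the $c_{n-1,\ell-1}$ part I would reindex $\ell\mapsto\ell+1$ and use the rising-factorial identity $(a+b)\rise{\ell+1}=(a+b)\rise\ell\,(a+b+\ell)$, turning it into $\sum_\ell(a+b+\ell)t_\ell$. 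Combining the two and splitting off the constant $a$ gives $Q_n(x)=x\sum_\ell(\ell+b)t_\ell+a\,Q_{n-1}(x)$.

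The one slightly delicate step — and essentially the only place where any real work is needed — is to convert $\sum_\ell\ell\,t_\ell$ into a derivative. For this I would differentiate $Q_{n-1}(x)=\sum_\ell t_\ell$; since each $t_\ell$ carries the factor $(x-1)^{n-1-\ell}$, one gets $(x-1)Q_{n-1}'(x)=\sum_\ell(n-1-\ell)t_\ell=(n-1)Q_{n-1}(x)-\sum_\ell\ell\,t_\ell$, whence $\sum_\ell(\ell+b)t_\ell=(n-1+b)Q_{n-1}(x)-(x-1)Q_{n-1}'(x)$. Substituting this back yields precisely $Q_n(x)=\bigpar{(n-1+b)x+a}Q_{n-1}(x)+x(1-x)Q_{n-1}'(x)$, which is \eqref{pnab-rec}. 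The induction then closes and gives $Q_n=\pnab$, hence $D_n(x,1)=(\ga\gb)^n\pnab(x)$.
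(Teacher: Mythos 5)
Your proof is correct and follows essentially the same route as the paper: both identify $D_n(x,1)/(\ga\gb)^n$ via \refL{L4} and verify by induction that this polynomial satisfies the recursion \eqref{pnab-rec}, using the recursion \eqref{l4c} for $c_{n,\ell}$ together with the rising-factorial identity $(a+b)\rise{\ell+1}=(a+b)\rise{\ell}(a+b+\ell)$. The only difference is cosmetic: the paper expands $\bigpar{(n+b)x+a}\hD_n(x)+x(1-x)\hD_n'(x)$ and shows it equals $\hD_{n+1}(x)$, whereas you run the same algebra in the opposite direction, starting from $Q_n$ and substituting the $c$-recursion before converting $\sum_\ell \ell\, t_\ell$ into a derivative term.
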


\begin{proof}
  Define $\hD_n(x)\=(\ga\gb)^{-n}D_n(x,1)$. Clearly,
  $\hD_0(x)=1=P_{0,a,b}(x)$.
We show that $\hD_n$ satisfies the recursion \eqref{pnab-rec},
which implies that $\hD_n=\pnab$ for all $n\ge0$ and thus completes the proof.
By \refL{L4},
\begin{equation*}
  \begin{split}
&((n+b)x+a)\hD_n(x)+x(1-x)\hD_n'(x)
\\&\quad
= \sum_{\ell=0}^n \bigpar{nx+bx+a-(n-\ell)x}
c_{n,\ell} (a+b)\rise\ell (x-1)^{n-\ell} 
\\&\quad
= \sum_{\ell=0}^n \bigpar{(\ell+b)(x-1)+\ell+b+a}
c_{n,\ell} (a+b)\rise\ell (x-1)^{n-\ell} 
\\&
=
\sum_{\ell=0}^n (\ell+b)
c_{n,\ell} (a+b)\rise\ell (x-1)^{n+1-\ell} 
+ \sum_{\ell=0}^n (a+b+\ell)
c_{n,\ell} (a+b)\rise\ell (x-1)^{n-\ell} 
\\&
=
\sum_{\ell=0}^n (\ell+b)
c_{n,\ell} (a+b)\rise\ell (x-1)^{n+1-\ell} 
+\sum_{j=1}^{n+1}
c_{n,j-1} (a+b)\rise j (x-1)^{n+1-j} 
\\&
=
\sum_{j=0}^{n+1}
c_{n+1,\ell} (a+b)\rise\ell (x-1)^{n+1-\ell} 
=\hD_{n+1}(x)
,
  \end{split}
\end{equation*}
where we used \eqref{l4c} and \eqref{l4} in the last line.
\end{proof}

\begin{proof}  [Proof of \refT{T1}]
Assume $\ga,\gb\in(0,\infty)$.
By \eqref{dn} and \eqref{zab0}, we have $D_n(1,1)=Z_n(\ga,\gb)$.
Moreover, it follows immediately from $A\ngab=A(S\ngab)$ and the definitions
\eqref{t1} and \eqref{pab} that
\begin{equation}
  g_A(x)=\sum_{S\in\css_n} x^{A(S)}\P(\sngab=S)
=\sum_{S\in\css_n} x^{A(S)}\frac{\wt(S)}{Z_n(\ga,\gb)}
=\frac{D_n(x,1)}{D_n(1,1)}.
\end{equation}
Hence, \refL{L5} yields
\begin{equation}
  g_A(x)=\frac{P\nab(x)}{P\nab(1)},
\end{equation}
which shows \eqref{t1}, using \eqref{tp1}. Extracting coefficients yields
\eqref{t1b}.

The case $\ga=\infty$ or $\gb=\infty$ follows by taking limits as
$\ga\to\infty$ ($\gb\to\infty$).
 
The case $\ga=\gb=\infty$ follows similarly by taking limits as
$\ga=\gb\to\infty$, using \refL{L00lim}.
\end{proof}

The proof above contains (as a simpler special case) the calculation of
$Z_n$ in \cite{cd-h}; we record this for completeness:

\begin{proof}
[Proof of \eqref{sim_gen} and \eqref{zab}]
Taking $x=1$ in \refL{L4} we obtain 
\begin{equation}
  Z_n(\ga,\gb)=D_n(1,1)=(\ga\gb)^n c_{n,n}(a+b)\rise n
= (\ga\gb)^n c_{n,n}(a+b)\rise n,
\end{equation}
since $c_{n,n}=1$ by \eqref{l4c} and induction.
(Alternatively, we may use \refL{L5} and \eqref{tp1}.)
This yields \eqref{zab}, and \eqref{sim_gen} follows by \eqref{ja}.
\end{proof}

\begin{proof}[Proof of \refT{TP'}.]
We assume $a,b>0$; the general case then follows since all quantities are
polynomials in $a$ and $b$.
By Lemmas \ref{L5} and \ref{L4},
for any $k\ge0$,
  \begin{equation}\label{magnus}
	\frac{\ddx^k}{\ddx x^k}\pnab(1)
= k!\, c_{n,n-k}(a+b)\rise{n-k}
  \end{equation}
(with $c_{n,\ell}=0$ for $\ell<0$). In particular, for $k=1$ we have by 
\eqref{l4c}
\begin{equation}
c_{n+1,n}=(n+b)c_{n,n}+c_{n,n-1}  
=n+b+c_{n,n-1},  
\end{equation}
and a simple induction yields
\begin{equation}
  c_{n,n-1}=\sum_{m=0}^{n-1} (m+b) = \frac{n(n+2b-1)}{2},
\end{equation}
which by \eqref{magnus} yields \eqref{tp'}.

Similarly,
\begin{equation}
  \begin{split}
  c_{n,n-2}
&=
\sum_{m=1}^n(m+b-2)c_{m-1,m-2}
\\&
=
\frac{n(n-1)(3n^2+(12b-11)n+12b^2-24b+10)}{24},	
  \end{split}
\end{equation}
which by \eqref{magnus} yields \eqref{tp''}.
\end{proof}

\begin{proof}[Proof of \refT{T2}]
  Assume first $(a,b)\neq(0,0)$. Then \eqref{t1} yields
  \begin{equation*}
\E A\ngab = g_A'(1)=\frac{P\nab'(1)}{P\nab(1)}
  \end{equation*}
and 
  \begin{equation*}
\Var A\ngab = g_A''(1)+ g_A'(1)-\bigpar{g_A'(1)}^2
=\frac{P\nab''(1)+P\nab'(1)}{P\nab(1)}
-
\frac{P\nab'(1)^2}{P\nab(1)^2}
  \end{equation*}
and the result follows from \refT{TP'} and \eqref{tp1}
(after some calculations).

The case $a=b=0$ follows by continuity.
\end{proof}

\begin{proof}[Proof of \refT{Tneg}]
  The first claim is immediate by Theorems \ref{T1} and \ref{TP0}.
This implies  \eqref{tneg} and the following claims by standard arguments:
If $g_A(x)$ has roots $-\xi_1,\dots,-\xi_n\le 0$, then, using $g_A(1)=1$,
\begin{equation}
g_A(x)=\frac{\prodin (x+\xi_i)}{\prodin (1+\xi_i)}
=\prodin \Bigpar{\frac{\xi_i}{1+\xi_i} + \frac{1}{1+\xi_i}x},
\end{equation}
which equals the \pgf{} of $\sumin \Be(p_i)$ for independent
$\Be(p_i)$ with $p_i=1/(1+\xi_i)$; this
verifies \eqref{tneg}. If $b=0$ so $g_A(x)$ has only $n-1$ roots, the same
holds with $p_n=0$. (We may then formally set $\xi_n=\infty$.)

The fact that the distribution of $A\nab$ is log-concave and thus unimodal
follows easily from \eqref{tneg} by induction; the same holds for the
sequence $\vx\ab(n,k)$, $k\in\bbZ$, by \eqref{t1b}.
\end{proof}

\begin{proof}[Proof of \refT{TCLT}]
  By \refT{Tneg},
  \begin{equation}\label{ull}
A\ngab\eqd\sumin I_i,	
  \end{equation}
with $I_i\sim\Be(p_i)$ independent. Note that then $\E A\ngab=\sumin p_i$
and $\Var A\ngab=\sumin p_i(1-p_i)$. Moreover, 
\begin{equation}
  \sumin \E |I_i-p_i|^3 \le   \sumin \E |I_i-p_i|^2=\Var A\ngab.
\end{equation}
The central limit theorem with Lyapounov's condition, see \eg{}
\cite[Theorem 7.2.2]{Gut}, shows that any sequence of sums of this type is
asymptotically normal, provided the variance tends to infinity, which holds
in our case by \refT{T2}.
\refT{T2} further shows 
\begin{align}\label{txe}
\E A\ngab&=n/2+O(1),\\
\Var A\ngab&=n/12+O(1), \label{txv}
\end{align}
which implies that the versions \eqref{tclt1} and  \eqref{tclt2}
are equivalent.

Finally, \cite[Theorem VII.3]{Petrov} shows that also a local  limit
theorem \eqref{tclt3a} 
holds for any sum of the type \eqref{ull}; again we use
\eqref{txe}--\eqref{txv} to simplify the result and obtain \eqref{tclt3b}.
\end{proof}

\begin{proof}[Proof of \refT{TN1}]
Assume first $\ga,\gb<\infty$.
  The joint \pgf{} of $(\na,\nb)$ is by definition
  \begin{equation}
	\frac{\sum_{S\in\css_n}\wt(S)x^{\na}y^{\nb}}{Z_n(\ga,\gb)}
=
	\frac{\sum_{S\in\css_n}\ga^{\na}\gb^{\nb}x^{\na}y^{\nb}}{Z_n(\ga,\gb)}
=	\frac{Z_n(\ga x,\gb y)}{Z_n(\ga,\gb)},
  \end{equation}
and \eqref{tn1} follows from \eqref{zab}.

Since $(I_i,J_i)$ defined by \eqref{tn1b} has the \pgf{} 
$\frac{bx+ay+ixy}{a+b+i}$, the distributional identity \eqref{tn1x} 
follows from \eqref{tn1}.
Thus $\E \na=\sumini\E I_i$, $\Var \na=\sumini\Var I_i$ and
$\Cov(\na,\nb)=\sumini\Cov(I_i,J_i)$, which yield \eqref{tn1e}--\eqref{tn1cov}.

The case when $\ga=\infty$ or $\gb=\infty$, or both, follows by taking limits.
\end{proof}

\begin{proof}[Proof of \refT{TN2}]
The estimates \eqref{tn2e}--\eqref{tn2cov} follow  from
\eqref{tn1e}--\eqref{tn1cov}.  

The central limit theorem \eqref{tn2a}--\eqref{tn2b}
follows from the representation \eqref{tn1x} in \refT{TN1} as in 
the proof of \refT{TCLT}; note that \eqref{tn2cov} implies
$\Cov(\na,\nb)/\log n\to0$,
which yields the 
independence of the limits in \eqref{tn2a}--\eqref{tn2b}.
\end{proof}

\section{Subtableaux}\label{Ssub}

We number the rows and columns of a \st{} by $1,\dots,n$ starting at
the NW corner (as in a matrix); the boxes are thus labelled by $(i,j)$ with
$i,j\ge1$ and $i+j\le n+1$. The diagonal boxes are $(i,n+1-i)$, 
$i=1,\dots,n$, going from NE to SW.
We denote the symbol in box $(i,j)$ of a \st{} $S$ by $S(i,j)$, with
$S(i,j)=0$ if the box is empty.

If we delete the first rows or columns from a \st, we obtain a new, smaller,
\st. For $S\in\cS_n$ and a box $(i,j)$ in $S$ (so $i+j\le n+1$),
let $S[i,j]$ be the subtableau with $(i,j)$ as its top left box, \ie, the
subtableau obtained by deleting the first $i-1$ rows and the first $j-1$
columns. Note that $S[i,j]\in\cS_{n-i-j+2}$.
(The conditions \ref{st0}--\ref{stag} are clearly satisfied.)

\begin{theorem}
  \label{Tsub}
Let $\ga,\gb\in(0,\infty]$ and $i+j\le n+1$.
The subtableau $\sngab[i,j]$ of $\sngab$ has the same distribution as 
$S_{n-i-j+2,\hga,\hgb}$, where
$\hga\qw=\ga\qw+i-1$ and
$\hgb\qw=\gb\qw+j-1$.
\end{theorem}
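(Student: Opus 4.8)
The plan is to reduce the general statement to the two elementary moves of deleting a single column or a single row, and then to iterate. The subtableau operation composes, $S[i,j]=(S[1,2])[i,j-1]$ when $j\ge2$ and $S[i,j]=(S[2,1])[i-1,j]$ when $i\ge2$, and $S[i,j]$ is a deterministic function of $S$, so any distributional identity for $\sngab$ is automatically inherited by its subtableaux. I would therefore induct on $i+j$, the base case $i=j=1$ being trivial since $S[1,1]=S$. In the inductive step, if $j\ge2$ I would first replace $\sngab$ by its first-column-deleted tableau and apply the induction hypothesis to the resulting size-$(n-1)$ model, while if $j=1$ (so necessarily $i\ge2$) I would delete the first row instead.

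The crux is the single-column deletion: for $0<\ga,\gb<\infty$ the tableau $\sngab[1,2]$ should have the law of $S_{n-1,\ga,\hgb}$ with $\hgb\qw=\gb\qw+1$. To prove this I would fix a target $T\in\css_{n-1}$ and sum $\wt(S)$ over all $S\in\css_n$ with $S[1,2]=T$. These $S$ are exactly the extensions of $T$ by a left column enumerated in the three cases of the proof of \refL{L2}, so summing their weights is precisely evaluating \eqref{ikam} at $x=z=1$, which gives $\sum_{S:\,S[1,2]=T}\wt(S)=(\ga+\gb)(1+\gb)^{r(T)}\wt(T)$. Using $r(T)=(n-1)-\nb(T)$ from \refR{Rr} and $\hgb=\gb/(1+\gb)$, the exponent bookkeeping rewrites this as $(\ga+\gb)(1+\gb)^{n-1}\,\ga^{\na(T)}\hgb^{\nb(T)}$, that is, a constant independent of $T$ times the weight of $T$ in the model with parameters $(\ga,\hgb)$. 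Hence $\P(\sngab[1,2]=T)$ is proportional to $\ga^{\na(T)}\hgb^{\nb(T)}$, and after normalising (both sides being probability distributions) this is exactly the law of $S_{n-1,\ga,\hgb}$ as given in \refD{Dsngab}.

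Deleting the first row is then obtained from the reflection $S\mapsto S^\dagger$ of \refR{Rsymm}, which interchanges rows and columns together with $\ga\leftrightarrow\gb$: it carries first-row deletion to first-column deletion and $\sngab$ to $S_{n,\gb,\ga}$, so the single-row statement ($a\mapsto a+1$ with $b$ fixed) follows from the single-column one. Feeding these two base moves into the induction, each column deletion sends $b\mapsto b+1$ and each row deletion sends $a\mapsto a+1$ while the size drops by one; after deleting $i-1$ rows and $j-1$ columns the parameters are $\hga\qw=\ga\qw+(i-1)$ and $\hgb\qw=\gb\qw+(j-1)$ at size $n-i-j+2$, as claimed. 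Finally I would recover the cases $\ga=\infty$ and/or $\gb=\infty$ by letting the parameters tend to their limits, using that $\sngab$ depends continuously in distribution on $(\ga,\gb)$ by \refD{Dsngab} while the subtableau map is fixed. The main obstacle is the weight computation in the single-column step: the whole argument hinges on the sum over extensions factoring as a $T$-independent constant times the reweighted $\wt$, and it is exactly this factorisation that pins down the reciprocal shift $\hgb\qw=\gb\qw+1$.
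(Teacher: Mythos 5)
Your proposal is correct and follows essentially the same route as the paper: the single-column deletion is handled by summing the weights of all extensions of a fixed $T\in\css_{n-1}$ via \eqref{ikam} at $x=z=1$, yielding proportionality to $\ga^{\na(T)}\hgb^{\nb(T)}$ with $\hgb\qw=\gb\qw+1$, after which row deletion follows by the symmetry of \refR{Rsymm} and the general case by induction on the number of deleted rows and columns. If anything, you are slightly more careful than the paper on two minor points: the exponent bookkeeping $r(T)=(n-1)-\nb(T)$ (the paper writes $(1+\gb)^{n-\nb}$, a harmless slip), and the explicit limiting argument covering $\ga=\infty$ or $\gb=\infty$.
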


\begin{proof}
  Consider first the case $i=1$ and $j=2$, where we only delete the first
  (leftmost) column.
Let $S\in\css_{n-1}$. The probability that $\sngab[1,2]=S$ is proportional
to the sum of the weights of all extensions of $S$ to a \st{} in $\css_n$.
By the proof of \refL{L2}, with $x=z=1$, this sum equals, see \eqref{ikam},
\begin{equation}
  \begin{split}
	  (\gb+\ga)\ga^{\na}\gb^{\nb}(1+\gb)^r
&=
  (\gb+\ga)\ga^{\na}\gb^{\nb}{(1+\gb)}^{n-\nb}
\\&
=
  (\gb+\ga)(1+\gb)^n\ga^{\na}\Bigparfrac{\gb}{1+\gb}^{\nb},
  \end{split}
\end{equation}
so $\P(\sngab[1,2]=S)$ is proportional to $\ga^{\na}\hgb^{\nb}$ with
$\hgb\=\gb/(\gb+1)$, \ie, $\hgb\qw=\gb\qw+1$.
Hence, $\sngab[1,2]\eqd S_{n-1,\ga,\hgb}$, so the theorem holds in this case.

Next, the case $i=2$, $j=1$ where we delete the top row follows by symmetry,
see \refR{Rsymm}.

Finally, the general case follows by induction, deleting one row or column
at a time.
\end{proof}

\section{The positions of the symbols}\label{Sloc}
We have so far considered the numbers of the symbols $\ga$ and $\gb$ in a
random \gabst, and the numbers of them on the diagonal. Now we consider the
position of the symbols.
We begin by considering the symbols on the diagonal, where every box is
filled with $\ga$ or $\gb$.

\begin{theorem}\label{TposD}
  Let $\ga,\gb\in(0,\infty]$ and let $a\=\ga\qw$, $b\=\gb\qw$.
The probability 
that the $i$:th diagonal box contains $\ga$ is 
\begin{equation}
\P\bigpar{\sngab(i,n+1-i)=\ga}
=
\frac{n-i+b}{n+a+b-1},
\qquad 1\le i\le n.
\end{equation}
\end{theorem}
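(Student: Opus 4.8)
The plan is to recognize that the $i$:th diagonal box is itself a staircase tableau of size one, and then to read off its content from the subtableau result \refT{Tsub}. Concretely, the $i$:th diagonal box is $(i,n+1-i)$, and since $i+(n+1-i)=n+1$, the subtableau $\sngab[i,n+1-i]$ lies in $\cS_{n-i-(n+1-i)+2}=\cS_1$; that is, it is the single box $(i,n+1-i)$, which by \ref{stdiag} must contain $\ga$ or $\gb$. Thus the content of the $i$:th diagonal box of $\sngab$ is exactly the content of the single box of $\sngab[i,n+1-i]$, and it suffices to understand the latter.

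First I would apply \refT{Tsub} with $j=n+1-i$, giving
\[
\sngab[i,n+1-i]\eqd S_{1,\hga,\hgb},\qquad \hga\qw=a+i-1,\quad \hgb\qw=b+n-i .
\]
For a tableau of size one the weight \eqref{w} is $\hga$ if the box holds $\ga$ and $\hgb$ if it holds $\gb$, and $Z_1(\hga,\hgb)=\hga+\hgb$ by \eqref{zab}; hence, directly from \eqref{pab},
\[
\P\bigpar{\sngab(i,n+1-i)=\ga}=\frac{\hga}{\hga+\hgb}.
\]
Substituting $\hga=1/(a+i-1)$ and $\hgb=1/(b+n-i)$ and clearing denominators is a one-line simplification that turns this into $(n-i+b)/(n+a+b-1)$, the claimed formula.

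The argument above assumes $\ga,\gb<\infty$; the cases $\ga=\infty$ or $\gb=\infty$ (\ie, $a=0$ or $b=0$) follow by taking limits exactly as in the proof of \refT{T1}, and one checks that the final expression remains correct. Indeed there is essentially no obstacle: the only point needing care is that \refT{Tsub} is invoked with the possibly infinite parameters $\hga,\hgb\in(0,\infty]$, and the degenerate case in which both would be infinite forces $a=b=0$ and $n=i=1$, which is excluded by the standing convention $n\ge2$ when $\ga=\gb=\infty$. When exactly one of $\hga,\hgb$ is infinite the box is deterministic and the formula correctly returns $1$ or $0$.
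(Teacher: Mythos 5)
Your proposal is correct and follows essentially the same route as the paper: apply \refT{Tsub} with $j=n+1-i$ to reduce to a size-one tableau $S_{1,\hga,\hgb}$ with $\hga\qw=a+i-1$, $\hgb\qw=b+n-i$, compute the single-box probability $\hga/(\hga+\hgb)$ from the definition, and simplify to $(n-i+b)/(n+a+b-1)$. Your extra care about the infinite-parameter cases (and the degenerate $\hga=\hgb=\infty$ situation) is sound, and matches what the paper handles implicitly via its conventions.
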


\begin{proof}
  If $n=1$, this follows directly from the definition and
  $\ga/(\ga+\gb)=b/(a+b)$. 

In general, we use \refT{Tsub} with $j=n+1-i$ which shows that
$\sngab[i,n+1-i]\eqd S_{1,\hga,\hgb}$ with 
$\ha\=\hga\qw=a+i-1$,
$\hb\=\hgb\qw=b+n-i$,
which yields
\begin{equation*}
\P\bigpar{\sngab(i,n+1-i)=\ga}
=\P\bigpar{S_{1,\hga,\hgb}(1,1)=\ga}
=\frac{\hga}{\hga+\hgb}
=\frac{n-i+b}{n+a+b-1}.
\qedhere
\end{equation*}
\end{proof}

The probability of an $\ga$
thus decreases linearly as we go from NE to SW, from approximately $1$ to
approximately $0$ for large $n$. Hence the top part of the diagonal contains
mainly 
$\ga$'s and the bottom part mainly $\gb$'s.
(This is very reasonable, since these choices give fewer restrictions by
\ref{stbd} and \ref{stag}.)

Non-diagonal boxes are often empty. The distribution of a given box is as
follows. 

\begin{theorem}\label{Tpos}
Let $\ga$, $\gb$ and $a,b$ be as  in Theorem~\ref{TposD}. 
The probability 
that the non-diagonal box $(i,j)$ contains $\ga$ or $\gb$ is,
\begin{align}
\P\bigpar{\sngab(i,j)=\ga}&=\frac{j-1+b}{(i+j+a+b-1)(i+j+a+b-2)},
\label{tposa}\\
\P\bigpar{\sngab(i,j)=\gb}&=\frac{i-1+a}{(i+j+a+b-1)(i+j+a+b-2)},
\label{tposb}
\intertext{and thus}
\P\bigpar{\sngab(i,j)\neq0}&=\frac{1}{i+j+a+b-1}.
\label{tpos}  
\end{align}
For $\ga=\gb=\infty$ and $i=j=1$, we interpret \eqref{tposa} and
\eqref{tposb} as $1/2$.
\end{theorem}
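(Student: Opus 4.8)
The plan is to reduce everything to the top-left box by the subtableau theorem and then run two short conditioning recursions. First I would invoke \refT{Tsub}: since $(i,j)$ is non-diagonal we have $i+j\le n$, so the subtableau $\sngab[i,j]$ has size $m\=n-i-j+2\ge 2$ and is distributed as $S_{m,\hga,\hgb}$ with $\hga\qw=a+i-1$ and $\hgb\qw=b+j-1$. Since box $(i,j)$ of $\sngab$ is exactly the box $(1,1)$ of this subtableau, it suffices to prove, for every $m\ge2$ and all $0<\ga,\gb<\infty$,
\begin{equation*}
\Pgab\bigpar{\sngab(1,1)=\ga}=\frac{b}{(a+b)(a+b+1)},
\end{equation*}
the remaining cases following by the symmetry \refR{Rsymm} and by summation, and the boundary values $\ga$ or $\gb=\infty$ by continuity of these rational functions of $a,b$. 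Note the striking feature that the answer must be independent of $m$.

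To compute the corner probability I would condition on $T\=\sngab[1,2]$, the tableau obtained by deleting the first column; by \refT{Tsub}, $T\eqd S_{m-1,\ga,\hgb}$ with $\hgb\qw=b+1$. Given $T$, the conditional law of the deleted first column is proportional to the weights of its admissible fillings, which are exactly those enumerated in the proof of \refL{L2}. Reading off \eqref{ikam} (with $x=z=1$), the total weight of all fillings is $(\ga+\gb)(1+\gb)^{r(T)}$, while a filling that puts $\ga$ in the top box $(1,1)$ can occur only when row $1$ of $T$ is indexed by $\ga$, in which case the total such weight is $\ga\gb(1+\gb)^{r(T)-1}$ (an $\ga$ on top, the forced $\gb$ at the bottom, and an arbitrary subset of $\gb$'s in the remaining $r(T)-1$ rows indexed by $\ga$). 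The factors $(1+\gb)^{r(T)}$ cancel, so
\begin{equation*}
\Pgab\bigpar{\sngab(1,1)=\ga\mid T}=\frac{\ga\gb}{(\ga+\gb)(1+\gb)}\,\ett{\text{row }1\text{ of }T\text{ is indexed by }\ga},
\end{equation*}
\emph{independently of} $r(T)$. Taking expectations reduces the problem to computing $\pi_{m-1}(\ga,\hgb)$, where $\pi_p(\ga,\gb)\=\Pgab(\text{row }1\text{ of }S_{p,\ga,\gb}\text{ is indexed by }\ga)$, i.e.\ the probability that row $1$ contains no $\gb$.

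The key remaining step is to show $\pi_p(\ga,\gb)=\ga/(\ga+\gb)$ for all $p\ge1$, again independent of $p$. I would obtain this by the same conditioning: row $1$ of $S_{p,\ga,\gb}$ is free of $\gb$ iff its leftmost box satisfies $\sngab(1,1)\neq\gb$ and row $1$ of $\sngab[1,2]$ is free of $\gb$. The identical filling analysis gives $\Pgab(\sngab(1,1)=\gb\mid T)=\gb^2/\bigpar{(\ga+\gb)(1+\gb)}$ on the event that row $1$ of $T$ is indexed by $\ga$ (and $0$ otherwise), once more independent of $r(T)$, whence
\begin{equation*}
\pi_p(\ga,\gb)=\Bigpar{1-\frac{\gb^2}{(\ga+\gb)(1+\gb)}}\pi_{p-1}(\ga,\hgb),\qquad \hgb\qw=b+1 .
\end{equation*}
In the parameters $a,b$ this reads $\pi_p(a,b)=\tfrac{b(a+b+1)}{(a+b)(b+1)}\,\pi_{p-1}(a,b+1)$, which together with the base case $\pi_1=\ga/(\ga+\gb)=b/(a+b)$ telescopes to the constant $b/(a+b)$ by a one-line induction. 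Substituting $\pi_{m-1}(\ga,\hgb)=(b+1)/(a+b+1)$ into the conditional formula of the previous paragraph and simplifying $\ga\gb/\bigpar{(\ga+\gb)(1+\gb)}=b/\bigpar{(a+b)(b+1)}$ yields exactly $b/\bigpar{(a+b)(a+b+1)}$, proving the reduced claim and hence \eqref{tposa}.

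Finally, \eqref{tposb} follows by applying \eqref{tposa} to $S_{n,\gb,\ga}$ at the reflected box $(j,i)$ and using the involution of \refR{Rsymm} (which fixes $(1,1)$ and interchanges $a\leftrightarrow b$), while \eqref{tpos} follows by adding \eqref{tposa} and \eqref{tposb}, since the two numerators sum to the denominator factor $i+j+a+b-2$. The limiting parameter values $\ga,\gb=\infty$ are handled by continuity in $a,b$, and the degenerate case $\ga=\gb=\infty$, $i=j=1$ is read off as $1/2$ by symmetry. I expect the main obstacle to be the bookkeeping of the two auxiliary quantities $r(T)$ and "row $1$ indexed by $\ga$"; the crucial point that makes the argument go through is that both conditioning steps are independent of $r(T)$, so the recursions collapse to values independent of the size.
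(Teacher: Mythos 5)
Your proof is correct, and its core computation takes a genuinely different route from the paper's. Both arguments share the same outer skeleton: reduce a general non-diagonal box to the corner box $(1,1)$ via \refT{Tsub}, deduce \eqref{tposb} from the symmetry of \refR{Rsymm}, get \eqref{tpos} by summation, and handle the boundary parameter values as limits. The difference lies in how the corner probability $b/\bigl((a+b)(a+b+1)\bigr)$ is obtained. The paper never conditions: it computes the expected number of $\ga$'s in the first column as a difference of two applications of \refT{TN1} (to $\sngab$ and, via \refT{Tsub}, to $\sngab[1,2]$), obtaining $b/(a+b)$, and then isolates box $(1,1)$ by subtracting the same first-column formula applied to $\sngab[2,1]$; it is pure expectation bookkeeping with no new recursion. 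You instead condition on $T=\sngab[1,2]$ and read off exact conditional probabilities from the column-filling enumeration in the proof of \refL{L2} --- the crucial cancellation being that these do not depend on $r(T)$ --- at the cost of an auxiliary induction showing $\P\bigl(\text{row }1\text{ of }S_{p,\ga,\gb}\text{ is indexed by }\ga\bigr)=\ga/(\ga+\gb)=b/(a+b)$ for all $p$. I checked the details: the filling weights ($\ga\gb(1+\gb)^{r-1}$ for an $\ga$ at $(1,1)$, $\gb^2(1+\gb)^{r-1}$ for a $\gb$ there, against the total $(\ga+\gb)(1+\gb)^{r}$), the recursion $\pi_p(a,b)=\tfrac{b(a+b+1)}{(a+b)(b+1)}\,\pi_{p-1}(a,b+1)$ with base $\pi_1=b/(a+b)$, and the final assembly are all correct. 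The paper's argument is shorter and reuses an already-proved global result; yours is more self-contained at the level of the transfer analysis and yields two byproducts of independent interest: the conditional law of the corner box given the rest of the tableau (it depends only on whether row $1$ of the remainder is indexed by $\ga$), and the $n$-independent identity $\P(\text{row }1\text{ contains no }\gb)=\ga/(\ga+\gb)$, a nice complement to \refR{Rr}.
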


\begin{proof}
Consider first the case $i=j=1$.
By \refT{TN1}, the expected total number of symbols $\ga$ in $S=S\ngab$ is
\begin{equation}\label{mg1}
  \E \na = \sum_{i=0}^{n-1}\lrpar{1-\frac{a}{a+b+i}}
\end{equation}
If we delete the first column, the remaining part 
$S[1,2]$
is by \refT{Tsub} an
$S_{n-1,\ga_1,\gb_1}$ with $a_1\=\ga_1\qw=a$
and $b_1\=\gb_1\qw=b+1$; hence \refT{TN1} shows that the expected number
of symbols in $S[1,2]$ is
\begin{equation}\label{mg2}
\sum_{i=0}^{n-2}\lrpar{1-\frac{a_1}{a_1+b_1+i}}
= \sum_{i=0}^{n-2}\lrpar{1-\frac{a}{a+b+1+i}}
= \sum_{i=1}^{n-1}\lrpar{1-\frac{a}{a+b+i}}.
\end{equation}
Taking the difference of \eqref{mg1} and \eqref{mg2} we see that 
\begin{equation}\label{mg3}
  \E\bigpar{\text{$\#\ga$ in the first column}}
= 1-\frac{a}{a+b} = \frac{b}{a+b}.
\end{equation}

Now delete the first row of $S$. By \refT{Tsub}, the remainder $S[2,1]$ is
an $S_{n-1,\ga_2,\gb_2}$ with $a_2\=\ga_2\qw=a+1$ and $b_2\=\gb_2\qw=b$.
Hence \eqref{mg3} applied to this subtableau shows that
\begin{equation}\label{mg4}
  \E\bigpar{\text{$\#\ga$ in boxes $(2,1),\dots,(n,1)$}}
=  \frac{b_2}{a_2+b_2} 
=  \frac{b}{a+b+1},
\end{equation}
and taking the difference of \eqref{mg3} and \eqref{mg4} we obtain
\begin{equation}\label{mg5}
  \P\bigpar{S\ngab(1,1)=\ga} = \frac{b}{a+b} - \frac{b}{a+b+1}
=
\frac{b}{(a+b)(a+b+1)}.
\end{equation}
(This argument is valid also for $n=2$, since \eqref{mg3} holds also for
$n=1$, by \refT{TposD} or by noting that \eqref{mg2} holds, trivially, also 
for $n=1$.)

We have shown \eqref{mg5}, 
which is \eqref{tposa} for $i=j=1$.
The general case of \eqref{tposa} follows by \refT{Tsub}, 
\eqref{tposb} follows by symmetry (\refR{Rsymm})
and \eqref{tpos} follows by summing.
\end{proof}

\begin{example}  
For $2\le k\le n$, the expected total number of symbols in the boxes on the
line $i+j=k$ parallel to the diagonal is
\begin{equation}
  \sum_{i=1}^{k-1}\frac1{k+a+b-1}=\frac{k-1}{k+a+b-1}.
\end{equation}
Thus, for $k$ large there is on the average about 1 symbol on each such line
that is not too short.
(In the case $\ga=\gb=\infty$, the expectation equals 1 for every such
line.)
We do not know the distribution of symbols on the line $i+j=k$, and leave
that as an open problem. 
We conjecture that the distribution is asymptotically 
Poisson as
$n,k\to\infty$. 
\end{example}

\begin{example}  
The expected  number of $\ga$'s on the
line $i+j=k$, with $2\le k\le n$, is
\begin{equation}
  \sum_{i=1}^{k-1}\frac{j-1+b}{(k+a+b-1)(k+a+b-2)}
=\frac{(k-1)(k+2b-2)}{2(k+a+b-1)(k+a+b-2)},
\end{equation}
which is about $1/2$ for large $k$ (with equality when $\ga=\gb=\infty$).
Again, we do  not know the distribution,
but 
we conjecture that it is asymptotically Poisson as $n,k\to\infty$. 
\end{example}

We can also consider the joint distribution for several boxes. 
We consider only boxes on the diagonal, leaving non-diagonal boxes as an
open problem.
Our key tool is the following simple lemma. Compare to \refT{Tsub} with no
conditioning and (in this case) a shift of $\gb$.

\begin{lemma}  \label{Lii} 
If we condition $S\ngab$ on the bottom box $S\ngab(n,1)=\ga$,
the subtableau $S\ngab[1,2]$ obtained by deleting the first column has the
distribution of $S_{n-1,\ga,\gb}$.
\end{lemma}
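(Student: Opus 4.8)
The plan is to read the statement off the column-extension decomposition already used in the proof of \refL{L2}. First I would observe that conditioning on the event $\set{\sngab(n,1)=\ga}$ singles out exactly case (i) of that proof: by rule \ref{stag}, an $\ga$ in the bottom box of the leftmost column forces every box above it in that column to be empty. Hence the first column is completely determined, and the map $S\mapsto S[1,2]$ is a bijection from $\set{S\in\css_n:S(n,1)=\ga}$ onto $\css_{n-1}$.

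Next I would track the weights across this bijection. Writing $S'\=S[1,2]$, the tableau $S$ has exactly one more $\ga$ than $S'$ and the same number of $\gb$'s, so $\wt(S)=\ga\,\wt(S')$; this is precisely the contribution recorded in \eqref{ika1} (with $x=z=1$). Therefore, for each fixed $S'\in\css_{n-1}$,
\begin{equation*}
\Pgab\bigpar{\sngab[1,2]=S',\ \sngab(n,1)=\ga}
=\frac{\ga\,\wt(S')}{Z_n(\ga,\gb)}.
\end{equation*}

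Finally I would normalize. Summing the displayed identity over $S'\in\css_{n-1}$ turns the denominator into $\ga Z_{n-1}(\ga,\gb)$ by \eqref{zab0}, so the constant factor $\ga$ cancels and
\begin{equation*}
\Pgab\bigpar{\sngab[1,2]=S'\mid \sngab(n,1)=\ga}
=\frac{\wt(S')}{Z_{n-1}(\ga,\gb)},
\end{equation*}
which is exactly the distribution \eqref{pab} defining $S_{n-1,\ga,\gb}$, completing the argument. The one point that needs care---and the only place the combinatorics enters---is the uniqueness of the deleted column: it is the forcing by \ref{stag} that both makes $S\mapsto S[1,2]$ a bijection and produces the clean multiplicative weight factor $\ga$. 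Once that is in hand the rest is routine bookkeeping with the normalizing constants $Z_n(\ga,\gb)$.
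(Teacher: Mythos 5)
Your proof is correct and follows essentially the same route as the paper's: both rest on the observation that rule \ref{stag} forces the first column to contain only the bottom $\ga$, so that $S\mapsto S[1,2]$ is a weight-tracking bijection onto $\css_{n-1}$ with $\wt(S)=\ga\,\wt(S[1,2])$, after which normalization gives the distribution \eqref{pab} for $S_{n-1,\ga,\gb}$. The paper states this more tersely (without spelling out the sum over $S'$ and the cancellation of $\ga$), but the content is identical.
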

\begin{proof}
  If $S$ is an \gabst{} such that the bottom box $S(n,1)=\ga$, then the first
  column is otherwise empty by \ref{stag}, and the remainder, \ie{} $S[1,2]$,
  is an arbitrary \gabst{} of size $n-1$. Introducing weights \eqref{w}, 
we see that if we condition $S\ngab$ on $S\ngab(n,1)=\ga$ and then delete
the first column, we obtain a copy of $S_{n-1,\ga,\gb}$ as asserted.
\end{proof}

The following theorem gives a complete description of the distribution of
the boxes on the diagonal. For convenience, we use a simplified notation,
letting $\snj$ be the symbol of the random $S\ngab$
in the diagonal box in \emph{column} $j$,
\ie, 
\begin{equation}
\snj\=S\ngab(n+1-j,j).
\end{equation}

\begin{theorem}\label{Tii}
  Let $\ga$, $\gb$ and $a,b$ be as  in Theorem~\ref{TposD}, 
and let $1\le j_1<\dots<j_\ell\le n$. Then
\begin{equation}
  \label{tii}
\P\bigpar{\snx{j_1}=\dots=\snx{j_\ell}=\ga}
=
\prod_{k=1}^\ell \frac{j_k-k+b}{n-k+a+b}.
\end{equation}
\end{theorem}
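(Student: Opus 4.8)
The plan is to prove \eqref{tii} by induction on the size $n$, deleting the leftmost column at each step and treating separately the cases in which that column does, or does not, carry one of the $\ell$ constraints. I would carry $a$ and $b$ (equivalently $\ga$ and $\gb$) as free parameters throughout, since the induction will be applied to smaller tableaux with shifted parameter values. The base case $n=1$ is immediate: the sole column is $j=1$, and $\P(\snx1=\ga)=\ga/(\ga+\gb)=b/(a+b)$ matches the right-hand side of \eqref{tii}, while the empty family $\ell=0$ gives the trivial identity $1=1$.

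The one piece of bookkeeping I would set up first is the effect of deleting the first column on diagonal boxes. The diagonal box in column $j$ of a tableau in $\css_n$ is $(n+1-j,j)$; for $j\ge2$ this box is not in the first column, and under deletion of that column it is carried to the box $(n+1-j,j-1)$, which is precisely the diagonal box in column $j-1$ of the resulting tableau in $\css_{n-1}$. Hence each symbol $\snx{j}$ with $j\ge2$ equals the diagonal symbol in column $j-1$ of the subtableau $\sngab[1,2]$.

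For the inductive step I would split on $j_1$. If $j_1=1$, I condition on $\snx1=S\ngab(n,1)=\ga$, an event of probability $b/(n-1+a+b)$ by \refT{TposD}; by \refL{Lii} the conditioned subtableau $\sngab[1,2]$ is distributed as $S_{n-1,\ga,\gb}$, with the \emph{same} parameters. The surviving constraints ask for $\ga$ on the diagonal in columns $j_2-1<\dots<j_\ell-1$ of this $S_{n-1,\ga,\gb}$, so the induction hypothesis (size $n-1$, parameters $a,b$, the family of $\ell-1$ columns) applies; multiplying by the conditioning probability and shifting indices $k\mapsto k-1$ reproduces \eqref{tii}. If instead $j_1>1$, the event in \eqref{tii} involves only columns $\ge2$ and is therefore a function of $\sngab[1,2]$ alone, so no conditioning is needed: by \refT{Tsub} with $i=1$, $j=2$ the subtableau $\sngab[1,2]$ is distributed as $S_{n-1,\hga,\hgb}$ with $\hga\qw=a$ and $\hgb\qw=b+1$, and the event becomes $\ga$ on the diagonal in columns $j_1-1<\dots<j_\ell-1$. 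The induction hypothesis for size $n-1$ and parameters $(a,b+1)$ then gives the product of the factors $\bigpar{(j_k-1)-k+(b+1)}/\bigpar{(n-1)-k+a+(b+1)}$, each of which simplifies to $(j_k-k+b)/(n-k+a+b)$, again the claimed value.

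The crux is the dichotomy between the two cases: when the leftmost column is constrained one must condition (and \refL{Lii} keeps the parameters fixed), whereas when it is unconstrained one merely marginalizes (and \refT{Tsub} shifts $b\mapsto b+1$); it is exactly this difference in parameter bookkeeping that makes the two computations land on the same product. The remaining verifications—the index shifts $j_k\mapsto j_k-1$, $n\mapsto n-1$, and the cancellation of the added $1$'s in the second case—are routine. Finally, the endpoints $\ga=\infty$ or $\gb=\infty$ (that is, $a=0$ or $b=0$) are covered directly, since \refT{TposD}, \refL{Lii}, and \refT{Tsub} all remain valid there, or alternatively by continuity in $a,b$ because both sides of \eqref{tii} are rational in these parameters.
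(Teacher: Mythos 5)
Your proof is correct and follows essentially the same route as the paper's: the identical induction on $n$, with the case $j_1>1$ handled by \refT{Tsub} (deleting the first column and shifting $b\mapsto b+1$) and the case $j_1=1$ handled by conditioning via \refL{Lii} together with \refT{TposD}. The paper's write-up is merely terser; your extra bookkeeping (base case, index shifts, and the endpoints $a=0$ or $b=0$) is consistent with it.
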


For $\ell=1$, this is \refT{TposD}.

\begin{proof}
We use induction on $n$. (Induction on $\ell$ is also possible.)

  If $j_1>1$, we may delete the first column, which decreases $n$ and each
  $j_k$ by 1 and, by \refT{Tsub}, increases $b$ by the same amount.
Thus \eqref{tii} follows by the inductive hypothesis.

If $j_1=1$, we use \refL{Lii} and obtain by \refT{TposD} and induction 
\begin{equation*}
  \begin{split}
\hskip2em&\hskip-2em \P\bigpar{\snx{j_1}=\dots=\snx{j_\ell}=\ga}
\\&
=
\P(\snx1=\ga)\P\bigpar{\snx{j_2}=\dots=\snx{j_\ell}=\ga\mid \snx{1}=\ga}
\\&
=
\P(\snx1=\ga)\P\bigpar{\sxx{n-1}{j_2-1}=\dots=\sxx{n-1}{j_\ell-1}
=\ga}
\\&
=\frac{b}{n+a+b-1} \prod_{k=1}^{\ell-1} \frac{j_{k+1}-1-k+b}{n-1-k+a+b},
  \end{split}
\end{equation*}
which shows \eqref{tii} in this case too.
\end{proof}

The case $\ell=2$ can also be expressed as a covariance formula.

\begin{corollary}  \label{Cii}
If $1\le j< k\le n$, then
\begin{equation*}
  \Cov\Bigpar{\ett{\snx{j}=\ga}, \ett{\snx{k}=\ga}}
=-\frac{(j-1+b)(n-k+a)}{(n+a+b-1)^2(n+a+b-2)}.
\end{equation*}
\end{corollary}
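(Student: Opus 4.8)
The plan is to use the elementary identity $\Cov(\ett{E},\ett{F})=\P(E\cap F)-\P(E)\P(F)$ for indicator variables, so that the corollary reduces to substituting the marginal and joint probabilities already computed and then carrying out one short cancellation. For the marginals, \refT{TposD} (equivalently \refT{Tii} with $\ell=1$) gives $\P(\snx{j}=\ga)=(j-1+b)/(n+a+b-1)$ and $\P(\snx{k}=\ga)=(k-1+b)/(n+a+b-1)$; here I recall that $\snx{j}=S\ngab(n+1-j,j)$ sits in column $j$, so the row index $i=n+1-j$ turns the exponent $n-i+b$ of \refT{TposD} into $j-1+b$. For the joint probability I would apply \refT{Tii} with $\ell=2$, $j_1=j$, $j_2=k$, obtaining
\[
\P\bigpar{\snx{j}=\snx{k}=\ga}=\frac{j-1+b}{n+a+b-1}\cdot\frac{k-2+b}{n+a+b-2}.
\]

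Next I would assemble the covariance as the difference of the joint probability and the product of the two marginals, and factor out the common factor $\frac{j-1+b}{n+a+b-1}$ present in both terms. This gives
\[
\Cov\Bigpar{\ett{\snx{j}=\ga},\ett{\snx{k}=\ga}}
=\frac{j-1+b}{n+a+b-1}\Bigpar{\frac{k-2+b}{n+a+b-2}-\frac{k-1+b}{n+a+b-1}}.
\]

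The only remaining task is to evaluate the bracketed difference over the common denominator $(n+a+b-1)(n+a+b-2)$. Writing $N\=n+a+b$, the numerator is $(k-2+b)(N-1)-(k-1+b)(N-2)$, which after expansion collapses to $(k+b)-N=-(n-k+a)$. Hence the bracket equals $-(n-k+a)/\bigpar{(n+a+b-1)(n+a+b-2)}$, and multiplying back by the common factor yields exactly the stated expression $-\frac{(j-1+b)(n-k+a)}{(n+a+b-1)^2(n+a+b-2)}$. There is no conceptual obstacle: Theorems \ref{TposD} and \ref{Tii} supply all the probabilities directly, and the proof is essentially this one-line cancellation, so the only care needed is to track the index shifts $k-1\mapsto k-2$ and $n+a+b-1\mapsto n+a+b-2$ coming from the second factor of the joint probability.
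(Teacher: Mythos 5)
Your proposal is correct and follows exactly the paper's own route: both compute the covariance as the joint probability from Theorem~\ref{Tii} (with $\ell=2$) minus the product of the marginals from Theorem~\ref{TposD}, factor out $\frac{j-1+b}{n+a+b-1}$, and simplify the resulting bracket. The only difference is that you write out the final cancellation (correctly), which the paper leaves as ``the result follows.''
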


\begin{proof}
  By \refT{Tii}, the covariance is
  \begin{multline*}
\frac{j-1+b}{n-1+a+b}\cdot\frac{k-2+b}{n-2+a+b}-\frac{j-1+b}{n-1+a+b}\cdot
\frac{k-1+b}{n-1+a+b}	
\\=
\frac{j-1+b}{n-1+a+b}\lrpar{\frac{k-2+b}{n-2+a+b}-\frac{k-1+b}{n-1+a+b}},		
  \end{multline*}
and the result follows.
\end{proof}

\begin{remark}  
Barbour and Janson \cite{SJ236} studied the profile of a random permutation
tableau, which by the bijection discussed in \refS{Sperm} is equivalent to
studying the sequence of partial sums $\sum_{j=1}^k \ett{\snj=\ga}$, 
$k=1,\dots,n$, in
the case $\ga=\gb=1$; it is shown in \cite{SJ236} that after rescaling, this
sequence converges to a Gaussian process. It would be interesting to extend
this to general $\ga$ and $\gb$.
\end{remark}

\section{The case $\ga=\gb=\infty$}\label{Smax}

The limiting case $\ga=\gb=\infty$ was studied in \refE{Eoooo}, where we saw
that $\snoooo$ is a uniformly random element of
$\cSxx_n$,  the set of \gabst{} with the maximal number,
  $2n-1$, of symbols $\ga$ and $\gb$.
We study these \gabst{x} further.

\begin{lemma}\label{Lmax1}
  A \st{} $S\in\cSxx_n$ has always box $(1,1)$ filled with a symbol.
\end{lemma}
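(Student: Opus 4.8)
The plan is to show that the single box $(1,1)$ must contain a symbol in any $S\in\cSxx_n$, i.e.\ any tableau achieving the maximal count $\na+\nb=2n-1$. First I would recall from \refE{Eoooo} that the tableaux in $\cSxx_n$ are precisely those realizing the top-degree terms of $Z_n(\ga,\gb)$, and that these have exactly $2n-1$ symbols: $n$ on the diagonal (forced by \ref{stdiag}) together with exactly $n-1$ additional off-diagonal symbols. The key structural constraint is that, by \ref{stbd} and \ref{stag}, each row holds at most one of $\gb,\gd$ and each column at most one of $\ga,\gam$; since we are in the $\css$ setting only $\ga,\gb$ appear, so each row has at most one $\gb$ (its leftmost filled box) and each column at most one $\ga$ (its topmost filled box).

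The natural approach is a counting/extremality argument. I would argue that to reach $2n-1$ symbols the off-diagonal symbols must be packed as tightly as the constraints allow, and then show directly that box $(1,1)$ cannot be empty without forcing the total below $2n-1$. Concretely, column $1$ has length $n$, and by \ref{stag} it contains at most one $\ga$ (the topmost symbol in that column), while by \ref{stbd} any $\gb$ in column $1$ must be the leftmost entry of its row, which it automatically is since column $1$ is the leftmost column. So column $1$ may contain at most one $\ga$ but possibly several $\gb$'s. The cleanest route is to use the recursive/peeling structure already developed: deleting the first column sends $\cSxx_n$-type tableaux to smaller ones, and I would track how the symbol count changes. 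Since removing the first column removes exactly the symbols in that column and yields a staircase tableau of size $n-1$ whose maximal symbol count is $2(n-1)-1=2n-3$, a tableau in $\cSxx_n$ must lose exactly two symbols from column $1$; in particular column $1$ is nonempty, and its bottom (diagonal) box and at least one higher box are filled. This already forces column $1$ to contain a symbol in row $1$, namely box $(1,1)$, because the at-most-one-$\ga$ rule means the two symbols in column $1$ cannot both be $\ga$, so one is a $\gb$; a $\gb$ in column $1$ occupies box $(i,1)$ and forces, by \ref{stag}, the box above any $\ga$ to be empty, constraining the vertical placement so that the topmost filled box in column~1 is as high as possible.

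I would then complete the argument by a short monotonicity observation: among all ways of filling column $1$ subject to \ref{stbd}--\ref{stag} while keeping the rest of the tableau in $\cSxx_{n-1}$, the configurations attaining the maximal count place the topmost symbol in row $1$, i.e.\ fill box $(1,1)$. Equivalently, using the generating-function bookkeeping in \refL{L2} and \refE{Eoooo}, the top-degree contribution corresponds to the extension in which the added column is filled maximally, and that maximal filling always occupies its top box $(1,1)$.

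The main obstacle I expect is making the ``tight packing forces box $(1,1)$ filled'' step rigorous rather than merely plausible: one must verify that \emph{every} tableau in $\cSxx_n$, not just some extremal representative, has box $(1,1)$ filled. I would handle this by the induction on $n$ just sketched, reducing to the base cases $n=1,2$ (checked directly, recalling the standing assumption $n\ge2$ from \refD{Dsngab}), and by invoking the precise column-deletion correspondence so that the count ``exactly $2n-1$'' propagates downward and pins the topmost filled box of column $1$ to row $1$.
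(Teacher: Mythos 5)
There is a genuine gap here: two of your key intermediate claims are false, and they are exactly the ones your induction rests on. First, the assertion that a tableau in $\cSxx_n$ ``must lose exactly two symbols from column $1$'' (equivalently, that $S[1,2]\in\cSxx_{n-1}$, which is what you need for ``the count exactly $2n-1$ propagates downward'') is wrong. Consider the tableau $S$ with $\gb$ in every box of column $1$ and $\ga$ in the diagonal box of each of the columns $2,\dots,n$, all other boxes empty. All of \ref{st0}--\ref{stag} hold (each $\gb$ has nothing to its left, each $\ga$ has nothing filled above it), and $S$ has $n+(n-1)=2n-1$ symbols, so $S\in\cSxx_n$; yet deleting its first column leaves only $n-1$ symbols, far below the maximum $2n-3$ for size $n-1$. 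So column $1$ of a maximal tableau can contain anywhere from $2$ to $n$ symbols, maximality does not pass to the subtableau, and the inductive step as you set it up cannot get started. (Note this tableau has $(1,1)$ filled, so it is consistent with the lemma; it only refutes your intermediate claims.) Second, the inference from ``column $1$ contains a $\gb$'' to ``box $(1,1)$ is filled'' is a non sequitur: none of the rules \ref{st0}--\ref{stag} forces the topmost filled box of a column to sit in row $1$ (a column filled only in its bottom two boxes with $\gb$'s violates nothing locally). What excludes an empty $(1,1)$ is the interaction of column $1$ with the rest of the tableau through maximality, and your ``monotonicity observation'' asserts exactly this interaction without proving it --- it is essentially a restatement of the lemma.

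The paper's combinatorial proof avoids all counting and works directly by an exchange argument you may want to compare with: if $(1,1)$ were empty, replace any $\ga$ in the first column by $\gb$ and any $\gb$ in the first row by $\ga$ (both replacements are legal, since nothing lies to the left of column $1$ or above row $1$); after this, column $1$ contains no $\ga$ and row $1$ contains no $\gb$, so a symbol can be inserted in box $(1,1)$ without violating \ref{stbd} or \ref{stag}. This produces a valid \gabst{} of size $n$ with one more symbol than $S$, contradicting the fact that $S$ already has the maximal number $2n-1$. Alternatively, the paper observes that the lemma is the case $i=j=1$, $a=b=0$ of \refT{Tpos}, which gives $\P\bigpar{\snoooo(1,1)\neq 0}=1$, and $\snoooo$ is uniform on $\cSxx_n$ by \refE{Eoooo}. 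If you want to keep a column-peeling argument, it would have to be rebuilt along these exchange lines; the symbol count alone does not pin down box $(1,1)$.
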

\begin{proof}
  This follows from \eqref{tpos} in \refT{Tpos}, 
taking $\ga=\gb=\infty$ and thus $a=b=0$,
which shows that the random
\st{} $S_{n,\infty,\infty}$ has a symbol in box $(1,1)$ with probability  1;
recall from \refE{Eoooo} that $S_{n,\infty,\infty}$ is uniformly distributed
in $\cSxx_n$. 

Alternatively, we can give a combinatorial proof as follows: Suppose that
$S\in\cSxx_n$ has box $(1,1)$ empty. We may replace any $\ga$ in the first
 column by $\gb$, and any $\gb$ in the first row by $\ga$, without
violating \ref{st0}--\ref{stag}, and we may then add $\ga$ (or $\gb$) in box
$(1,1)$, yielding a \st{} with one more symbol, which is a contradiction
since $\cSxx_n$ consists of the \gabst{x} with a maximum number of symbols.
\end{proof}

Given a \st{} $S\in\cSxx_n$, we let as above $S(1,1)$ be the symbol in
$(1,1)$, and we let $S'$ be the \st{} obtained by removing this symbol from
$S$.

\begin{lemma}
  \label{Lmax2}
If $S\in\cSxx_n$, then $S'$ has $n-1$ $\ga$'s and $n-1$ $\gb$'s.

More precisely, $S'$ has an $\ga$ in each column except the first, and a
$\gb$ in each row except the first.
\end{lemma}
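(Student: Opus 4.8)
The plan is to reduce everything to a single structural statement about $S$ itself and then read off $S'$. First I would record the elementary counts: by \ref{stag} each column holds at most one $\ga$ and by \ref{stbd} each row at most one $\gb$, so $\na\le n$ and $\nb\le n$; since $S\in\cSxx_n$ has $\na+\nb=2n-1$, necessarily $\{\na,\nb\}=\{n-1,n\}$. By \refR{Rr} the rows without a $\gb$ are exactly the rows indexed by $\ga$, and there are $r=n-\nb\in\{0,1\}$ of them; symmetrically there are $n-\na\in\{0,1\}$ columns without an $\ga$. Thus the lemma is equivalent to the assertion that every row but possibly the first contains a $\gb$, and every column but possibly the first contains an $\ga$. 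Because $S$ and $S'$ differ only in the box $(1,1)$, which lies in the first row and column, these two claims may be proved for $S$ and then transferred to $S'$. Moreover the involution $S\mapsto S^\dagger$ of \refR{Rsymm} preserves $\cSxx_n$, fixes the box $(1,1)$, and carries ``$\gb$-free row $i$'' to ``$\ga$-free column $i$'', so it suffices to establish the row claim.

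The heart of the argument is to show that a $\gb$-free row can only be the first one. Suppose some row $i_0\ge2$ contains no $\gb$. Then $r\ge1$, which forces $\nb=n-1$ and hence $\na=n$, so every column---in particular the first---contains an $\ga$. Since $i_0\neq1$, the first row does contain a $\gb$; by \ref{stbd} that $\gb$ is the leftmost nonempty entry of row $1$, and since $(1,1)$ is filled by \refL{Lmax1}, the $\gb$ must occupy $(1,1)$. But the $\ga$ of the first column then sits in a box $(i',1)$ with $i'\ge2$, and \ref{stag} forces every box above it, including $(1,1)$, to be empty---contradicting $S(1,1)=\gb$. Hence the $\gb$-free row, if any, is row $1$; the column statement follows by the symmetry above. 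I expect this contradiction---knitting together \refL{Lmax1}, \ref{stbd}, and \ref{stag}---to be the only step needing real thought, the remainder being bookkeeping.

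Finally I would read off the exact placement and counts. The two claims say that only the first column may lack an $\ga$ and only the first row may lack a $\gb$, so $\na=(n-1)+\epsilon_1$ and $\nb=(n-1)+\epsilon_2$ with $\epsilon_1,\epsilon_2\in\{0,1\}$ recording whether column $1$ holds an $\ga$ and row $1$ a $\gb$; then $\na+\nb=2n-1$ gives $\epsilon_1+\epsilon_2=1$. By \refL{Lmax1} the box $(1,1)$ is filled. If $S(1,1)=\ga$ it is the unique $\ga$ of the first column, so $\epsilon_1=1$, while \ref{stbd} precludes a $\gb$ in row $1$, so $\epsilon_2=0$; if $S(1,1)=\gb$ the roles reverse. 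In either case deleting the symbol in $(1,1)$ to form $S'$ removes the only occupant of the first column or the first row, so $S'$ has an $\ga$ in each of columns $2,\dots,n$ and none in column $1$, and a $\gb$ in each of rows $2,\dots,n$ and none in row $1$---exactly $n-1$ symbols of each kind, placed as claimed.
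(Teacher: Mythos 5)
Your proof is correct, and it rests on the same three facts as the paper's --- \refL{Lmax1} plus the observations that a filled box $(1,1)$ together with \ref{stag} forbids any other $\ga$ in column $1$, and together with \ref{stbd} forbids any other $\gb$ in row $1$ --- but you assemble them along a genuinely different route. The paper applies these constraints directly to $S'$ and squeezes: $S'$ can have at most one $\ga$ in each of columns $2,\dots,n$ and none in column $1$, at most one $\gb$ in each of rows $2,\dots,n$ and none in row $1$, so $\na(S')+\nb(S')\le 2n-2$; since $S'$ has exactly $2n-2$ symbols, all the inequalities must be equalities, and the lemma drops out with no case distinctions at all. You instead work on $S$ itself: you pigeonhole $\{\na,\nb\}=\{n-1,n\}$, show by contradiction that the (at most one) $\gb$-free row must be row $1$, dualize to columns via the involution of \refR{Rsymm}, and finally case-split on the symbol $S(1,1)$ to fix the counts. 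This is longer, and your contradiction step in essence re-derives locally what the paper's counting bound yields globally for free; on the other hand, your route extracts extra information about $S$ itself, namely that $S(1,1)=\ga$ precisely when $\na=n$ (row $1$ then being the unique $\gb$-free row), which is exactly the fact underlying the $2$--$1$ map $S\mapsto S'$ and the parameter $\rho$ in \refL{Lmax3}, something the paper establishes separately after the lemma. One wording slip: deleting $S(1,1)$ removes ``the only $\ga$ of the first column'' (resp.\ ``the only $\gb$ of the first row''), not ``the only occupant'' --- column $1$ may well contain $\gb$'s below $(1,1)$ --- but your conclusion uses only the correct reading, so nothing breaks.
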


\begin{proof}
By \ref{stag}, $S$ has at most one $\ga$ in each column; moreover, since
$(1,1)$ is filled, the first column cannot contain an $\ga$ in any other
box.
Hence, $S'$ contains no $\ga$ in the first column, and at most one $\ga$ in
every other column. Similarly,
$S'$ contains no $\gb$ in the first row and at most one in every other row.

Consequently,
$\na(S')+\nb(S')\le(n-1)+(n-1)=2n-2$. On the other hand, $S$ contains $2n-1$
symbols so $S'$ contains $2n-2$ symbols and we must have equality.
\end{proof}

Conversely, if $S_0\in\css_n$ has $n-1$ $\ga$'s and $n-1$ $\gb$'s
distributed as described in \refL{Lmax2}, then box $(1,1)$ is empty and we
may add any of $\ga$ or $\gb$ to $(1,1)$ and obtain a \st{} in $\cSxx_n$.
Let $\cSxxi_n\=\set{S':S\in\cSxx_n}$ be the set of \gabst{x} described in 
\refL{Lmax2}. The mapping $S\mapsto S'$ is thus a 2--1-map of $\cSxx_n$ onto
$\cSxxi_n$.

Given $\rho\in\oi$, we define a random \gabst{} $\snoooorho$ by picking a
random, uniformly distributed, $S'\in\cSxxi_n$ and adding a random symbol,
independent of $S'$, in box $(1,1)$, with probability $\rho$ of adding $\ga$. 
In particular, $\snooooq$ has the uniform distribution on $\cSxx_n$, \ie{},
$\snooooq=\snoooo$, see \refE{Eoooo}. 

\begin{lemma}\label{Lmax3}
  Let $\ga,\gb\in(0,\infty)$.
Then
the random tableau $\sngab$
  conditioned to have the maximum number $2n-1$ of symbols
has the distribution of $\snoooorho$ with
$\rho=\ga/(\ga+\gb)$.
\end{lemma}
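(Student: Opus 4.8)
The plan is to compute the conditional law of $\sngab$ given the event $\{\na+\nb=2n-1\}$ directly from the weights \eqref{w}, and to read off the answer as precisely the law defining $\snoooorho$ with $\rho=\ga/(\ga+\gb)$. Since this conditioning event is exactly $\{\sngab\in\cSxx_n\}$, the conditional probability of any $S\in\cSxx_n$ is $\wt(S)/\Zxx_n(\ga,\gb)$, and everything reduces to analysing this ratio.

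First I would exploit \refL{Lmax2} to factor the weight. By \refL{Lmax1} every $S\in\cSxx_n$ has box $(1,1)$ filled, and by \refL{Lmax2} its reduction $S'$ has exactly $n-1$ symbols $\ga$ and $n-1$ symbols $\gb$; hence $\wt(S')=\ga^{n-1}\gb^{n-1}$ is one and the same constant for \emph{every} $S'\in\cSxxi_n$. Consequently
\begin{equation*}
\wt(S)=\ga^{n-1}\gb^{n-1}\cdot S(1,1),
\end{equation*}
where $S(1,1)\in\{\ga,\gb\}$ is read as the corresponding parameter value. This splitting of $\wt(S)$ into a factor that is independent of $S'$ and a factor that depends only on the symbol in box $(1,1)$ is the heart of the argument.

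Next I would substitute $\Zxx_n(\ga,\gb)=(n-1)!\,\ga^{n-1}\gb^{n-1}(\ga+\gb)$ from \refE{Eoooo} and cancel. For $S$ with $S(1,1)=\ga$ the conditional probability becomes $\ga/\bigpar{(n-1)!(\ga+\gb)}$, and for $S$ with $S(1,1)=\gb$ it becomes $\gb/\bigpar{(n-1)!(\ga+\gb)}$. In either case this equals $\tfrac1{(n-1)!}$ times $\tfrac{\ga}{\ga+\gb}$ or $\tfrac{\gb}{\ga+\gb}$, respectively.

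Finally I would identify this with the construction of $\snoooorho$. Because $S\mapsto S'$ is the $2$--$1$ map of $\cSxx_n$ onto $\cSxxi_n$ with $\abs{\cSxxi_n}=(n-1)!$, summing the two conditional probabilities over the fibre above a fixed $S'$ gives total mass $\tfrac1{(n-1)!}$, so the marginal of $S'$ is uniform; and given $S'$, box $(1,1)$ receives $\ga$ (resp.\ $\gb$) with probability $\tfrac{\ga}{\ga+\gb}$ (resp.\ $\tfrac{\gb}{\ga+\gb}$), independently of $S'$. These are exactly the steps in the definition of $\snoooorho$ with $\rho=\ga/(\ga+\gb)$, so the two laws coincide, as claimed. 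I do not expect a real obstacle: once the constant-weight observation from \refL{Lmax2} is in hand the proof is the short cancellation above, and the only point needing care is the dual reading of $S(1,1)$ as both a symbol and a parameter value when the weight is factored.
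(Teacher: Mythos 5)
Your proposal is correct and follows essentially the same route as the paper: both rest on the factorization $\wt(S)=\wt(S')\cdot S(1,1)$ together with \refL{Lmax2}'s observation that $\wt(S')=\ga^{n-1}\gb^{n-1}$ is constant on $\cSxxi_n$, from which uniformity of $S'$ and the independent $\ga/(\ga+\gb)$ choice in box $(1,1)$ follow. The paper leaves the normalization implicit where you compute $\Zxx_n(\ga,\gb)$ explicitly, but that is only a difference in the level of detail, not in the argument.
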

\begin{proof}
  A \st{} $S\in\cSxx_n$ has weight $\ga \wt(S')$ if $S(1,1)=\ga$
and
$\gb \wt(S')$ if $S(1,1)=\gb$. Since all \st{x} $S'\in\cSxxi_n$ have the same
weight $\ga^{n-1}\gb^{n-1}$ by \refL{Lmax2}, the result follows.
\end{proof}

We have defined $\snoooo$ by letting $\ga=\gb\to\infty$. What happens if we
let $\ga\to\infty$ and $\gb\to\infty$, but with different rates?

\begin{theorem}\label{Toooorho}
  Let $\ga\to\infty$ and $\gb\to\infty$ such that
  $\ga/(\ga+\gb)\to\rho\in\oi$, and let $n\ge1$ be fixed.
Then $\sngab\dto\snoooorho$.
\end{theorem}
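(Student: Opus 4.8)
The plan is to exploit that $\css_n$ is a finite set: since $\sngab$ and $\snoooorho$ both take values in $\css_n$, the convergence $\sngab\dto\snoooorho$ is equivalent to the pointwise convergence $\P(\sngab=S)\to\P(\snoooorho=S)$ for every fixed $S\in\css_n$. The entire proof thus reduces to computing, for each $S$, the limit of the explicit probability $\P(\sngab=S)=\ga^{\na(S)}\gb^{\nb(S)}/Z_n(\ga,\gb)$ from \eqref{pab}, using the product formula \eqref{zab}.

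The first step is the asymptotics of the normalizing constant. Factoring the dominant monomial out of each factor of \eqref{zab}, for $i\ge1$ we have $\ga+\gb+i\ga\gb=i\ga\gb(1+o(1))$ as $\ga,\gb\to\infty$, while the $i=0$ factor is exactly $\ga+\gb$; hence
\[
Z_n(\ga,\gb)=(\ga+\gb)(\ga\gb)^{n-1}(n-1)!\,(1+o(1))=\Zxx_n(\ga,\gb)\,(1+o(1)),
\]
with $\Zxx_n(\ga,\gb)=(n-1)!\,\ga^{n-1}\gb^{n-1}(\ga+\gb)$ as computed in \refE{Eoooo}. Consequently, for a fixed $S$,
\[
\P(\sngab=S)=\frac{1}{(n-1)!}\cdot\frac{\ga^{\,\na-n+1}\gb^{\,\nb-n+1}}{\ga+\gb}\,(1+o(1)).
\]
Writing $p=\na-n+1$ and $q=\nb-n+1$, the constraints $\na,\nb\le n$ give $p,q\le1$, and $p+q=\na+\nb-2n+2$. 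When $S\in\cSxx_n$ we have $\na+\nb=2n-1$, forcing $(p,q)=(1,0)$ (that is, $\na=n$) or $(p,q)=(0,1)$ (that is, $\nb=n$); then $\ga^p\gb^q/(\ga+\gb)$ equals $\ga/(\ga+\gb)\to\rho$ or $\gb/(\ga+\gb)\to1-\rho$ respectively. When $S\notin\cSxx_n$ we have $\na+\nb\le2n-2$, so $p+q\le0$, and I would dispose of $\ga^p\gb^q/(\ga+\gb)\to0$ by three cases: if $p=1$ then $q\le-1$ and the quotient is $\le\gb^q\to0$; symmetrically if $q=1$; and if $p,q\le0$ the quotient is $\le1/(\ga+\gb)\to0$.

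It then remains to match these limits with the target. By \refL{Lmax2} the removal map $S\mapsto S'$ is $2$-to-$1$ onto $\cSxxi_n$, which has $(n-1)!$ elements, and $\na(S)=n$ holds exactly when $S(1,1)=\ga$. From the definition of $\snoooorho$ (uniform choice of $S'\in\cSxxi_n$ followed by an independent symbol at $(1,1)$ that is $\ga$ with probability $\rho$), we get $\P(\snoooorho=S)=\rho/(n-1)!$ if $\na(S)=n$, $=(1-\rho)/(n-1)!$ if $\nb(S)=n$, and $0$ if $S\notin\cSxx_n$, in exact agreement with the computed limits. The main obstacle is the bookkeeping in the extreme cases $\rho\in\{0,1\}$, where $\ga$ and $\gb$ tend to infinity at genuinely different rates and one cannot treat $\ga$, $\gb$, $\ga+\gb$ as comparable; the case analysis on $(p,q)$ above is precisely what guarantees that the non-maximal tableaux still carry vanishing probability even then.
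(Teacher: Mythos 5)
Your proof is correct, but it follows a genuinely different route from the paper's. The paper's proof has two steps: a crude bound showing that the total weight of $\css_n\setminus\cSxx_n$ is $o\bigpar{Z_n(\ga,\gb)}$, hence $\P(\sngab\notin\cSxx_n)\to0$; and an appeal to \refL{Lmax3}, which states that for all finite $\ga,\gb$ the law of $\sngab$ conditioned on $\cSxx_n$ is \emph{exactly} that of $\snoooorho$ with $\rho=\ga/(\ga+\gb)$, so the theorem follows because only the symbol in box $(1,1)$ depends on $\rho$ and its law is continuous in $\rho$. You bypass \refL{Lmax3} entirely: you reduce convergence in distribution on the finite set $\css_n$ to pointwise convergence of the point probabilities (which is valid), extract $Z_n(\ga,\gb)=\Zxx_n(\ga,\gb)\bigpar{1+o(1)}$ from the product formula \eqref{zab}, and compute every limit directly from \eqref{pab}, using \refL{Lmax2} together with the enumeration $|\cSxx_n|=2(n-1)!$ of \refE{Eoooo} to identify the limits $\rho/(n-1)!$, $(1-\rho)/(n-1)!$ and $0$ as the point probabilities of $\snoooorho$. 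The mechanism is the same in both proofs---the top-degree terms dominate $Z_n(\ga,\gb)$---but the paper's conditioning argument is shorter and isolates the $\rho$-dependence in a single exact identity valid for all finite $\ga,\gb$, whereas yours is more self-contained (it re-derives the content of \refL{Lmax3} in the limit) and makes explicit, through the case analysis on $(p,q)$, why the non-maximal tableaux stay negligible even when $\ga$ and $\gb$ grow at very different rates; the paper disposes of that uniformly with the single bound $\ga^n\gb^{n-2}+\ga^{n-2}\gb^{n}=o\bigpar{\ga^n\gb^{n-1}+\ga^{n-1}\gb^{n}}$. One small point: your inequality $\ga^p\gb^q\le1$ for $p,q\le0$ implicitly uses $\ga,\gb\ge1$, which you may of course assume since $\ga,\gb\to\infty$; the paper makes the same assumption explicit.
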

\begin{proof}
  The weight of every \gabst{} in $\css_n\setminus\cSxx_n$ is at most,
assuming as we may $\ga,\gb\ge1$,
\begin{equation}
\ga^n\gb^{n-2}+\ga^{n-2}\gb^{n} = o\bigpar{\ga^n\gb^{n-1}+\ga^{n-1}\gb^{n}}
=o\bigpar{Z_n(\ga,\gb)}.
\end{equation}
Hence $\P(\sngab\notin\cSxx_n)\to0$, so it suffices to consider $\sngab$
conditioned on being in $\cSxx_n$, and the result follows by \refL{Lmax3}.
\end{proof}

Thus, although the limiting distribution depends on the size of $\ga/\gb$,
it is only the distribution of the top left symbol $S(1,1)$ that is
affected; $\sngab'$ has a unique limit distribution for all
$\ga,\gb\to\infty$.
In particular, the distribution of the symbols on the diagonal has a unique
limiting distribution.

\section{An urn model}\label{Surn}    
Consider the following generalized \Polya{} urn model (an instance of
the so-called
Friedman's urn \cite{Friedman, Freedman}, which was
studied already by Bernstein
\cite{Bernstein1,Bernstein2}; see also Flajolet et al \cite{Flajolet}): 
An urn contains white and black balls. There are initially $a$ white and $b$
black balls. 
At times $1,2,\dots$, one ball is drawn at random from the urn and then
replaced, together with a new ball of the opposite colour.

Let $A_n$ [$B_n$] be the number of white [black] balls added in the $n$
first draws; we thus have $A_n+B_n=n$.
Furthermore, after $n$ draws there are $A_n+a$ white and $B_n+b$ black balls
in the urn, and thus
\begin{equation}\label{urn-rec}
  \P(A_{n+1}=k)
=
\frac{a+k}{n+a+b}\P(A_n=k)
+
\frac{n-(k-1)+b}{n+a+b}\P(A_n=k-1).
\end{equation}
Comparing \eqref{urn-rec} to \eqref{vx-rec} we find by induction
\begin{equation}\label{urn}
  \P(A_n=k)=\frac{\vx_{a,b}(n,k)}{(a+b)\rise n}.
\end{equation}
(Cf.\ \eqref{tp1}.)

In the description of the urn model, it is natural to assume that $a$ and
$b$ are integers. However, urn models of this type can easily
be extended to allow fractional balls and thus non-integer ``numbers'' of
balls, see \eg{} \cite{SJ154}.
(It is then perhaps better to talk about weights instead of numbers,
allowing balls of different weights.)
We thus may allow the initial numbers $a$ and $b$ to be any non-negative
real numbers with $a+b>0$; we still add one (whole) ball each time. (When
$a$ and $b$ are rational, with a common denominator $q$, there is also an
equivalent model starting with $qa$ and $qb$ balls and each time adding $q$
balls of the opposite colour.) Equation \eqref{urn} still holds, which
by \refT{T1} shows the following:

\begin{theorem}\label{Turn}
  Let $\ga,\gb\in(0,\infty]$, with $(\ga,\gb)\neq(\infty,\infty)$.
Then, for every $n\ge0$,
$(A\ngab,B\ngab)$ has the same distribution as $(A_n,B_n)$ in the urn model
above, starting with $a\=\ga\qw$ white and $b\=\gb\qw$ black balls.
\nopf
\end{theorem}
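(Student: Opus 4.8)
The plan is to reduce the bivariate claim to a univariate one and then to match two closed formulas for the law of $A$. In the urn model $A_n+B_n=n$ holds deterministically, and in the tableau model $A\ngab+B\ngab=n$ holds by \ref{stdiag}; thus in either case $B$ is a fixed function of $A$, and each pair is the image of its $A$-coordinate under the map $k\mapsto(k,n-k)$. Consequently it suffices to prove $A\ngab\eqd A_n$, after which the identity of the full pairs $(A\ngab,B\ngab)$ and $(A_n,B_n)$ is automatic.

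To pin down the law of $A_n$ I would establish \eqref{urn} by induction on $n$. The base case $n=0$ is trivial, since $A_0=0$ and $\vx\ab(0,0)=1=(a+b)\rise0$. For the step, substitute the inductive hypothesis $\P(A_n=k)=\vx\ab(n,k)/(a+b)\rise n$ into the urn recursion \eqref{urn-rec} and put both terms over the common denominator $(n+a+b)(a+b)\rise n=(a+b)\rise{n+1}$; the resulting numerator is $(k+a)\vx\ab(n,k)+(n-k+1+b)\vx\ab(n,k-1)$, which is exactly $\vx\ab(n+1,k)$ by the defining recursion \eqref{vx-rec} read at level $n+1$. This yields $\P(A_{n+1}=k)=\vx\ab(n+1,k)/(a+b)\rise{n+1}$ and closes the induction. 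The one point needing a word of care is that $a$ and $b$ are in general non-integers, so the urn must be read in the generalized fractional-weight sense described above; but \eqref{urn-rec} holds verbatim in that reading, and nothing else about the urn is used.

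With \eqref{urn} in hand, the theorem is a one-line comparison. By \refT{T1}, in the form \eqref{t1b}, we have $\P(A\ngab=k)=\vx\ab(n,k)/(a+b)\rise n$, which is literally the right-hand side of \eqref{urn}. Hence $A\ngab\eqd A_n$ for every $n\ge0$, and by the first paragraph $(A\ngab,B\ngab)\eqd(A_n,B_n)$. The hypothesis $(\ga,\gb)\neq(\infty,\infty)$ is invoked only to guarantee $a+b>0$, so that the urn is well defined and $(a+b)\rise n>0$; the excluded corner $a=b=0$ is precisely the degenerate case treated separately in \refE{Eoooo} and \refS{Smax}.

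I expect no genuinely hard step here: all of the substantive work is already carried by \refT{T1}, whose proof identifies the tableau generating function with $\pnab$. The only thing to get exactly right is the bookkeeping in the inductive step, namely that the draw probabilities in \eqref{urn-rec} — $(k+a)/(n+a+b)$ when a white ball is drawn (so $A$ is unchanged) and $(n-k+1+b)/(n+a+b)$ when a black ball is drawn (so $A$ increases by $1$) — reproduce the two coefficients of \eqref{vx-rec} at level $n+1$ once the index shift $k\mapsto k-1$ in the second term is accounted for.
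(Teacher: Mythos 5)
Your proposal is correct and follows essentially the same route as the paper: the paper likewise obtains \eqref{urn} by comparing the urn recursion \eqref{urn-rec} with the defining recursion \eqref{vx-rec} via induction, extends the urn to fractional (non-integer) ball weights, and then concludes by matching against \eqref{t1b} of \refT{T1}. Your explicit reduction of the bivariate claim to the univariate one via $A+B=n$, and the careful bookkeeping of the index shift in the inductive step, are merely spelled-out versions of steps the paper leaves implicit.
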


In this urn model, we assume $a+b>0$, since the definition assumes that we
do not start with an empty urn. We may cover the case $a=b=0$ too by any
extra rule saying which ball to add to an empty urn, for example choosing a
white or black ball at random. In any case, 
the second ball gets the opposite colour so 
the composition at time $n=2$ is
$(1,1)$, and the urn then evolves as an urn with this initial composition.
Consequently, for $n\ge2$, \eqref{urn} yields, using \eqref{tvx00},
\begin{equation}\label{urn00}
  \P(A_n=k)=\frac{\vx_{1,1}(n-2,k-1)}{2\rise n}
=\frac{\tvx_{0,0}(n,k)}{(n-1)!}.
\end{equation}
Hence, using \eqref{t100p}, 
\refT{Turn} holds in the case $\ga=\gb=\infty$ too, with this extra
interpretation. 

\begin{remark}
We similarly see that an urn started with the composition $(0,1)$ or  $(1,0)$
becomes $(1,1)$ after the first step. The relations 
$A_{n,\infty,1}\eqd A_{n-1,1,1}+1$
and 
$B_{n,\infty,1}\eqd B_{n-1,1,1}$ in \refE{Eoo1} are thus obvious for the 
corresponding urn models.
\end{remark}

Note that asymptotic normality \eqref{tclt1} is well-known for many 
generalized \Polya{} urn 
models, including this one
\cite{Bernstein1,Bernstein2,Freedman,SJ154}. We do not know any general
local limit theorems for such urn models.

\section{Permutation tableaux, alternative tableaux and tree-like tableaux}
\label{Sperm}

\emph{Permutation tableaux} (see \eg{} \cite{SW,CN,CW1,CW2,ch,hj}),
\emph{alternative tableaux} \cite{n} 
and 
\emph{tree-like tableaux} \cite{abn}
are Young diagrams (of arbitrary shape) with some symbols added according to
specific rules, see the references just given for definitions.
The size of one of these is measured by its \emph{length}, which is the sum
of the number of rows and the number of columns.

There are bijections between the \gabst{x} of size $n$, the alternative
tableaux of length $n$ and the permutation tableaux of length $n+1$
\cite[Appendix]{cw}, as well as between these and the tree-like tableaux of
length $n+2$ \cite{abn}.
(In particular, the numbers of tableaux of these four types are the same,
\viz{} $(n+1)!$. In fact, there are also several bijections between these
objects and permutations of size $n+1$
\cite{SW,CN,n,cd-h,abn}.)
 In these correspondences, the shape of the alternative tableau corresponds to 
the sequence of symbols on the diagonal of the staircase tableau, with $\ga$
and $\gb$ in the latter corresponding to vertical and horizontal steps on
the SE border of the alternative tableaux; the shapes of the permutation and
tree-like tableaux are the same with an additional first row, or additional
first row and first column, added. 

Some parameters are easily translated by these bijections; \refTab{tab:corr}
gives some important examples from \cite{cw,abn} (see these references for
definitions). 

A uniformly random tableaux of any of these types thus corresponds to a
random \st{} $S_{n,1,1}$, see \refE{E11}. For examples, this enables us to
recover several of the results for permutation tableaux in \cite{hj} from
the results above.

Furthermore, deleting the top row of a \st{} corresponds for the alternative
tableau to deleting the first step on its SE boundary; this means deleting
its last column if it is empty, and otherwise deleting the first row.
(And similarly for deleting the first column.) 
Hence, \refT{Tsub} translates to a result on subtableaux of random
alternative tableaux. 

\begin{table}
  \begin{tabular}{l|l|l|l}
alternative & permutation & tree-like & staircase \\
 tableaux &  tableaux &  tableaux &  tableaux
\\ \hline
\#rows & \#rows $-1$ & \#rows $-1$ & $A$
\\ \hline
\#columns & \#columns  & \#columns $-1$ & $B$
\\ \hline
\#free rows & \#unrestricted rows $-1$ & \#left points & $n-\nb$
\\ \hline
\#free columns & \#top 1's & \#top points & $n-\na$
\\\hline
\#$\leftarrow$ & \#restricted rows $-1$ & \#empty left cells  & $\nb-B$
\\ \hline
\#$\uparrow$ & \#top 0's & \#empty top cells & $\na-A$
\\ \hline
\end{tabular}
\caption{Some correspondences between different types of tableaux.}
\label{tab:corr}
\end{table}

\section{Staircase tableaux and the ASEP}\label{SASEP}

As mentioned in the introduction, \st{x} were introduced in
\cite{cw_pnas,cw} in connection with the 
\emph{asymmetric exclusion process (ASEP)}; as a background, we give some
details here.
The ASEP is a Markov process
describing a system of particles on a line with $n$ sites $1,\dots,n$; each
site may contain at most one particle. Particles jump one step to the right with
intensity $u$ and to the left with intensity $q$, provided the move is to a
site that is empty; moreover, 
new particles enter site 1 with intensity $\ga$ and site $n$ with intensity
$\gd$, provided these sites are empty, and particles at site 1 or 
and $n$ leave the system at rates $\gam$ and $\gb$, respectively.
(There is also a discrete-time version.)
See further \cite{cw}, which also contains references and
information on  applications and connections to other branches of science.

Explicit expressions for the steady state probabilities of the ASEP
were first given in \cite{Derrida1}.  
 Corteel and Williams \cite{cw} gave an expression using \st{x} and a more
elaborate version of the weight $\wt(S)$ and generating function for them.
For this version, we first fill the tableau $S$
by labelling  the empty boxes of $S$ with $u$'s and $q$'s as
follows: first, we fill all the boxes to the left of a $\beta$ with  $u$'s,
and all the boxes to the left of a $\delta$ with  $q$'s. Then, we fill the
remaining boxes above an $\alpha$ or a $\delta$ with  $u$'s, and the 
remaining boxes above a 
$\beta$ or a $\gamma$ with  $q$'s. When the tableau is filled, its weight,
$\wtx(S)$, is defined as
the product of labels of the boxes of $S$; this is thus a monomial of degree
$n(n+1)/2$ in $\alpha$, $\beta$, $\gamma$, $\delta$, $u$ and $q$.
For example, 
\refF{fig:filled} shows the tableau in \refF{fig:tab}
filled with $u$'s and $q$'s; its
weight is $\a^5\b^2\delta^3\g^3u^{13}q^{10}$.
\begin{figure}[htbp]
\setlength{\unitlength}{0.4cm}
\begin{picture}(10,8)(10,0)\thicklines

\put(11,0){\line(0,0){8}}
\put(12,0){\line(0,1){8}}
\put(13,1){\line(0,1){7}}
\put(14,2){\line(0,1){6}}
\put(15,3){\line(0,1){5}}
\put(16,4){\line(0,1){4}}
\put(17,5){\line(0,1){3}}
\put(18,6){\line(0,1){2}}
\put(19,7){\line(0,1){1}}

\put(11,8){\line(1,0){8}}
\put(11,7){\line(1,0){8}}
\put(11,6){\line(1,0){7}}
\put(11,5){\line(1,0){6}}
\put(11,4){\line(1,0){5}}
\put(11,3){\line(1,0){4}}
\put(11,2){\line(1,0){3}}
\put(11,1){\line(1,0){2}}
\put(11,0){\line(1,0){1}}

\put(12.25,7.25){$\alpha$}
\put(18.25,7.25){$\gamma$}
\put(11.25,0.25){$\alpha$}
\put(12.25,1.25){$\beta$}
\put(13.25,2.25){$\delta$}
\put(14.25,3.25){$\alpha$}
\put(15.25,4.25){$\delta$}
\put(16.25,5.25){$\gamma$}
\put(17.25,6.25){$\gamma$}
\put(12.25,3.25){$\delta$}
\put(13.25,5.25){$\alpha$}
\put(15.25,6.25){$\alpha$}
\put(12.25,6.25){$\beta$}

\put(11.25,1.25){$u$}
\put(11.25,2.25){$q$}
\put(11.25,3.25){$q$}
\put(11.25,4.25){$q$}
\put(11.25,5.25){$u$}
\put(11.25,6.25){$u$}
\put(11.25,7.25){$u$}

\put(12.25,2.25){$q$}
\put(12.25,4.25){$q$}
\put(12.25,5.25){$u$}

\put(13.25,3.25){$u$}
\put(13.25,4.25){$q$}
\put(13.25,6.25){$u$}
\put(13.25,7.25){$u$}

\put(14.25,4.25){$q$}
\put(14.25,5.25){$u$}
\put(14.25,6.25){$u$}
\put(14.25,7.25){$u$}

\put(15.25,5.25){$u$}
\put(15.25,7.25){$u$}

\put(16.25,6.25){$q$}
\put(16.25,7.25){$q$}

\put(17.25,7.25){$q$}

\end{picture}
\caption{
The staircase tableau in \refF{fig:tab}
filled with $u$'s and $q$'s; the weight is 
$\alpha^5\beta^2\delta^3\gamma^3u^{13}q^{10}$.}
\lbl{fig:filled}
\end{figure}
We then let $Z_n(\alpha, \beta,\gamma,\delta,q,u)$ be the total weight of
all filled staircase tableaux of size $n$, i.e. 
\begin{equation}\label{gen-gen}
 Z_n(\alpha, \beta,\gamma,\delta,q,u) = \sum_{S\in\cS_n } \wtx(S).  
\end{equation}
Obviously, $Z_n$ is a homogeneous polynomial of degree $n(n+1)/2$. 
Note that the simplified versions of $\wt(S)$ and $Z_n$  used in the present
paper are obtained by putting $u=q=1$, and that in this case $Z_n$ has the
simple form \eqref{sim_gen}.
Other special cases for which there is a simple form are discussed
in \cite{cd-h} and \cite{cssw}.
The general generating function \eqref{gen-gen} also has
connections to the Askey--Wilson polynomials, see \cite{cw,cssw}.
See also \cite{CW1} for connections between a special case and permutation
tableaux.

\newcommand\AAP{\emph{Adv. Appl. Probab.} }
\newcommand\JAP{\emph{J. Appl. Probab.} }
\newcommand\JAMS{\emph{J. \AMS} }
\newcommand\MAMS{\emph{Memoirs \AMS} }
\newcommand\PAMS{\emph{Proc. \AMS} }
\newcommand\TAMS{\emph{Trans. \AMS} }
\newcommand\AnnMS{\emph{Ann. Math. Statist.} }
\newcommand\AnnPr{\emph{Ann. Probab.} }
\newcommand\CPC{\emph{Combin. Probab. Comput.} }
\newcommand\JMAA{\emph{J. Math. Anal. Appl.} }
\newcommand\RSA{\emph{Random Struct. Alg.} }
\newcommand\ZW{\emph{Z. Wahrsch. Verw. Gebiete} }
\newcommand\DMTCS{\jour{Discr. Math. Theor. Comput. Sci.} }

\newcommand\AMS{Amer. Math. Soc.}
\newcommand\Springer{Springer-Verlag}
\newcommand\Wiley{Wiley}
\newcommand\vol{\relax}
\newcommand\jour{\emph}
\newcommand\book{\emph}
\newcommand\inbook{\emph}
\def\no#1#2,{\unskip#2, no. #1,} 
\newcommand\toappear{\unskip, to appear}

\newcommand\urlsvante{\url{http://www.math.uu.se/~svante/papers/}}
\newcommand\arxiv[1]{\url{arXiv:#1.}}
\newcommand\arXiv{\arxiv}

\def\nobibitem#1\par{}


\begin{thebibliography}{10} 

\bibitem{abn}
J.-C. Aval, A.~Boussicault, and P.~Nadeau.
\newblock Tree-like tableaux.
\newblock In {\em 23rd {I}nternational {C}onference on {F}ormal {P}ower
  {S}eries and {A}lgebraic {C}ombinatorics ({FPSAC} 2011)}, Discrete Math.
  Theor. Comput. Sci. Proc., AO:63--74,
2011.

\bibitem[Barbour, Holst and Janson(1992)]{SJI} 
A. D. Barbour,  L. Holst and S. Janson.
\emph{Poisson Approximation}.
Oxford Univ. Press, Oxford, 1992.

\bibitem{SJ236} 
A. Barbour and S. Janson.
A functional combinatorial central limit theorem.
\emph{Electron. J. Probab.} 14, Paper 81, 2352--2370, 2009.

\bibitem{Bernstein1}
S. Bernstein.  
Nouvelles applications des grandeurs al\'eatoires presqu'ind\'ependantes. 
(Russian)  
\emph{Izv. Akad. Nauk SSSR Ser. Mat.}
\vol4:137--150, 1940.

                                                         
\bibitem{Bernstein2}
S. Bernstein. 
Sur un probl\`eme du sch\'ema des urnes \`a composition variable. 
\jour{C. R. (Doklady) Acad. Sci. URSS (N.S.)} \vol{28}:5--7, 1940.
                                                         
\bibitem{Brenti}
F.~Brenti.
$q$-Eulerian polynomials arising from Coxeter groups. 
\emph{European J. Combin.} 15(5):417--441, 1994. 

\bibitem[Carlitz(1959)]{Carlitz}
L. Carlitz.
Eulerian numbers and polynomials.
\emph{Mathematics Magazine} 32:247--260, 1959.

\bibitem{Carlitz-asN}
L. Carlitz,  D. C. Kurtz, R. Scoville and O. P. Stackelberg. 
Asymptotic properties of Eulerian numbers.
\emph{Z. Wahrscheinlichkeitstheorie und Verw. Gebiete}, 23:47--54, 1972. 


\bibitem{ChowG}
C.-O.~Chow and I. M. Gessel. 
On the descent numbers and major indices for the hyperoctahedral group. 
\emph{Adv.  Appl. Math.}, 38(3):275--301, 2007.

\bibitem{cd-h}
S.~Corteel and S.~Dasse-Hartaut.
\newblock Statistics on staircase tableaux, {E}ulerian and {M}ahonian
  statistics.
\newblock In {\em 23rd {I}nternational {C}onference on {F}ormal {P}ower
  {S}eries and {A}lgebraic {C}ombinatorics ({FPSAC} 2011)}, Discrete Math.
  Theor. Comput. Sci. Proc., AO:245--255,
2011.

\bibitem{ch}
S.~Corteel and P.~Hitczenko.
\newblock Expected values of statistics on permutation tableaux.
\newblock In {\em 2007 {C}onference on {A}nalysis of {A}lgorithms, {A}of{A}
  07}, Discrete Math. Theor. Comput. Sci. Proc., AH:325--339,
2007.

\bibitem{CN} 
S. Corteel and P. Nadeau.
Permutation tableaux and permutation descents.
\emph{Europ. J. Combin.} 30:295--310,  2009.

\bibitem{cssw}
S.~Corteel, R.~Stanley, D.~Stanton, and L.~Williams.
\newblock Formulae for {A}skey--{W}ilson moments and enumeration of staircase
  tableaux.
\newblock {\em Trans. Amer. Math. Soc.}, 364(11):6009--6037, 2012.

\bibitem{CW1}
S.~Corteel and L.~K. Williams.
\newblock A {M}arkov chain on permutations which projects to the {PASEP}.
\newblock {\em Int. Math. Res. Notes}, Article 17:rnm055, 27pp., 2007.


\bibitem{CW2}
S.~Corteel and L.~K. Williams.
\newblock Tableaux combinatorics for the asymmetric exclusion process.
\newblock {\em Adv. Appl. Math.}, 39:293--310, 2007.

\bibitem{cw_pnas}
S.~Corteel and L.~K. Williams.
\newblock Staircase tableaux, the asymmetric exclusion process, and 
{A}skey--{W}ilson polynomials.
\newblock {\em Proc. Natl. Acad. Sci.}, 107(15):6726--6730, 2010.

\bibitem[Corteel and Williams(2011)]{cw}
S.~Corteel and L.~K. Williams.
\newblock Tableaux combinatorics for the asymmetric exclusion process and
  {A}skey--{W}ilson polynomials.
\newblock {\em Duke Math. J.}, 159:385--415, 2011.

\bibitem[Dasse-Hartaut and Hitczenko(2012+)]{d-hh}
S.~Dasse-Hartaut and P.~Hitczenko.
\newblock Greek letters in random staircase tableaux.
\newblock \emph{Random Struct. Algorithms}, 42:73--96, 2013.

\bibitem{Derrida1}
B.~Derrida, M.~R. Evans, V.~Hakim, and V.~Pasquier.
\newblock Exact solution of a {$1$}{D} asymmetric exclusion model using a
  matrix formulation.
\newblock {\em J. Phys. A}, 26(7):1493--1517, 1993.


\bibitem{E55}
L. Euler. 
Methodus universalis series summandi ulterius promota.
\emph{Commentarii academiae scientiarum imperialis Petropolitanae}
\vol8, (1736), 
St. Petersburg, 1741, 
pp. 147--158.
\url{http://www.math.dartmouth.edu/~euler/pages/E055.html}


\bibitem{E212} 
L. Euler.  
\emph{Institutiones calculi differentialis cum eius usu in analysi finitorum ac
doctrina serierum. Vol I}.
St. Petersburg, 1755. 
\url{http://www.math.dartmouth.edu/~euler/pages/E212.html}

\bibitem{E352}
L. Euler. 
Remarques sur un beau rapport entre les s\'eries des puissances tant direct
que r\'eciproques, 
\emph{Memoires de l'Acad\'emie Royale des Sciences et des Belles-Lettres}
17, in
\emph{Histoire de l'Acad\'emie Royale des Sciences et des Belles-Lettres de
  Berlin 1761}, 
Berlin, Haude et Spener, 1768, pp. 83--106. 
(Paper read to the academy in 1749.)
\url{http://www.math.dartmouth.edu/~euler/pages/E352.html}


\bibitem{Flajolet}
P. Flajolet, P. Dumas and V. Puyhaubert. 
Some exactly solvable models of urn process theory. 
In \emph{Fourth Colloquium on Mathematics and
 Computer Science Algorithms, Trees, Combinatorics and Probabilities},
Discrete Math. Theor. Comput. Sci. Proc., AG:59–118, 
2006.  

\bibitem{Foata}
D.~Foata.
Eulerian polynomials: from Euler's time to the present,
\emph{The Legacy of Alladi Ramakrishnan in the Mathematical Sciences}, 
Springer, New York, 2010, pp. 253--273.


\bibitem{pyr}
G.~R. Franssens.
\newblock On a number pyramid related to the binomial, {D}eleham, {E}ulerian,
  {M}ac{M}ahon and {S}tirling number triangles.
\newblock {\em J. Integer Seq.}, 9(4):Article 06.4.1, 34 pp., 
  2006.

\bibitem{Freedman} 
D. A. Freedman. 
Bernard Friedman's urn. 
\AnnMS \vol{36}:956--970, 1965.
 
\bibitem{Friedman} 
B. Friedman. 
A simple urn model. 
\emph{Comm. Pure Appl. Math.} \vol2:59--70, 1949. 

\bibitem[Frobenius(1910)]{Frobenius}
G. Frobenius. 
\"Uber die Bernoullischen Zahlen
und die Eulerschen Polynome.
\emph{Sitzungsberichte der K\"oniglich Preussischen Akademie der Wissenschaften,
1910}, 
Berlin, 1910,
pp.~809--847.

\bibitem{CM}
R. L. Graham, D. E. Knuth, and O. Patashnik.
\emph{Concrete Mathematics}.
2nd ed., Addison-Wesley,  Reading, MA, 1994.

\bibitem{Gut}
A. Gut. 
\emph{Probability: A Graduate Course}.
2nd ed., Springer, New York, 2013.

\bibitem[Hirzebruch(2008)]{Hirzebruch}
F. Hirzebruch.
Eulerian polynomials.
\emph{M\"unster J. of Math.} 1:9--14, 2008.

\bibitem{hj}
P.~Hitczenko and S.~Janson.
\newblock Asymptotic normality of statistics on permutation tableaux.
\newblock {\em Contemporary Math.}, 520:83--104, 2010.

\bibitem{SJ154}
S. Janson. 
Functional limit theorems for multitype branching processes and generalized
P\'olya urns. 
\emph{Stoch. Proc. Appl.} 110(2):177--245, 2004.

\bibitem{LiuWang}
L. L. Liu and  Y. Wang.
A unified approach to polynomial sequences with only real zeros.
\emph{Adv. Appl. Math.},
38(4):542--560, 2007.

\bibitem[MacMahon(1920)]{MacMahon}
P. A. MacMahon.
The divisors of numbers. 
\emph{Proc. London Math. Soc. Ser. 2}, 19(1):305--340, 1920.

\bibitem{MeinardusMerz}
G. Meinardus and G. Merz. 
Zur periodischen Spline-Interpolation. 
\emph{Spline-Funktionen
(Proceedings, Oberwolfach, 1973)}, 
Bibliographisches Inst., Mannheim, 1974,
pp. 177--195.

\bibitem{n}
P.~Nadeau.
\newblock The structure of alternative tableaux.
\newblock {\em J. Combin. Theory Ser. A}, 118(5):1638--1660, 2011.

\bibitem{NIST}
\emph{NIST Digital Library of Mathematical Functions}.
\url{http://dlmf.nist.gov/}

\bibitem{OEIS}
\emph{The On-Line Encyclopedia of Integer Sequences}.
\url{http://oeis.org}

\bibitem{Petrov}
V. V. Petrov.
\emph{Sums of Independent Random Variables}.
Springer-Verlag, Berlin, 1975.

\bibitem{Reimer:extremal} 
M. Reimer.  
Extremal spline bases.
\emph{J. Approx. Theory}, 36(2):91--98, 1982. 

\bibitem{Reimer:main}
M. Reimer.
The main roots of the Euler--Frobenius polynomials.
\emph{J. Approx. Theory},  45(4):358--362, 1985.

\bibitem{SchmidtS}
F. Schmidt and R. Simion. 
Some geometric probability problems involving the Eulerian numbers. 
\emph{Electron. J. Combin.} \vol4(2), Research Paper 18, 13 pp, 1997. 

\bibitem{Siepmann}
D. Siepmann. 
Cardinal interpolation by polynomial splines:
interpolation of data with exponential growth. 
\emph{J. Approx. Theory},  53(2):167--183, 1988.

\bibitem[Stanley(1997)]{StanleyI}
R. P. Stanley.
\emph{Enumerative Combinatorics,
Volume I}.
Cambridge Univ. Press, Cambridge, 1997.

\bibitem{SW}
E.~Steingr\'{\i}msson and L.~K. Williams.
\newblock Permutation tableaux and permutation patterns.
\newblock {\em J. Combin. Theory Ser. A}, 114(2):211--234, 2007.

\bibitem{terMorsche}
H. ter Morsche. 
On the existence and convergence of interpolating periodic spline functions
of arbitrary degree. 
\emph{Spline-Funktionen
(Proceedings, Oberwolfach, 1973)}, 
Bibliographisches Inst., Mannheim, 1974,
pp. 197--214. 

\bibitem{WangYeh}
Y. Wang and Y.-N. Yeh.
Polynomials with real zeros and P{\'o}lya frequency sequences. 
\emph{J. Combin. Theory Ser. A}, 109(1):63--74, 2005.

\end{thebibliography}

\end{document}